\def\Ver{\mathop{\mathrm{Ver}}\nolimits}
\def\Rep{\mathop{\mathrm{Rep}}\nolimits}
\def\Irrep{\mathop{\mathrm{Irrep}}\nolimits}
\def\ind{\mathrm{ind}}
\def\End{\mathop{\mathrm{End}}\nolimits}
\def\coev{\mathrm{coev}}
\def\tr{\mathop{\mathrm{Tr}}\nolimits}
\def\Hom{\mathop{\mathrm{Hom}}\nolimits}
\def\sVec{\mathrm{sVec}}
\def\Lie{\mathop{\mathrm{Lie}}\nolimits}
\def\C{\mathcal{C}}
\def\sl{\mathop{\mathfrak{sl}}\nolimits}
\def\gl{\mathop{\mathfrak{gl}}\nolimits}
\begin{document}

\title{Representations of General Linear Groups in the Verlinde Category}
\author{Siddharth Venkatesh}

\maketitle

\begin{abstract}

In this article, we construct affine group schemes $GL(X)$ where $X$ is any object in the Verlinde category in characteristic $p$ and classify their irreducible representations. We begin by showing that for a simple object $X$ of categorical dimension $i$, this representation category is semisimple and is equivalent to the connected component of the Verlinde category for $SL_{i}$. Subsequently, we use this along with a Verma module construction to classify irreducible representations of $GL(nL)$ for any simple object $L$ and any natural number $n$. Finally, parabolic induction allows us to classify irreducible representations of $GL(X)$ where $X$ is any object in $\Ver_{p}$.

\end{abstract}

\section{Introduction}

Let $p > 0$ and fix an algebraically closed field $\mathbf{k}$ of characteristic $p$. The Verlinde category $\Ver_{p}$ over $\mathbf{k}$ is a symmetric fusion category obtained by semisimplifying $\Rep_{\mathbf{k}}(\mathbb{Z}/p\mathbb{Z}).$ This category is important for several reasons. It has important relationships with semisimplifications of modular representation theory categories and with categories of tilting modules for reductive algebraic groups over $\mathbf{k}$. Most importantly, a theorem of Ostrik \cite{O} proves that any symmetric fusion category over $\mathbf{k}$ is equivalent to the category of representations of a commutative ind-Hopf algebra in $\Ver_{p}$. Furthermore, upcoming work of Coulembier, Etingof and Ostrik shows that the same is true for any symmetric tensor category that has moderate growth of tensor powers and satisfies a particular exactness property. Hence, commutative ind-Hopf algebras in $\Ver_{p}$ and their representations play a big role in studying symmetric tensor categories over $\mathbf{k}$.

In \cite{Ven}, we initiated a systematic study of commutative and co-commutative ind-Hopf algebras in $\Ver_{p}$. We proved some important algebra-geometric properties of these algebras, and used them to show that the data of a finitely generated commutative ind-Hopf algebra, i.e. an affine group scheme, in $\Ver_{p}$ is the same as the data of the underlying ordinary commutative Hopf algebra, the Lie algebra of the group scheme in $\Ver_{p}$ and the adjoint action of the former on the latter. Moreover, this equivalence extends to the corresponding representation categories and allows us to easily construct comodules for commutative ind-Hopf algebras in $\Ver_{p}$ from representations of the corresponding Lie algebra.

This article is intended as a followup to \cite{Ven} and our goal here is to use the central results of that paper to study the representation theory of some specific affine group schemes in $\Ver_{p}$. Given an object $X \in \Ver_{p}$, one can construct a group scheme $GL(X)$ as follows: the Hopf algebra of functions on this group scheme is defined to be $S((X^{*} \otimes X) \oplus (X^{*} \otimes X))/I$, where $I$ is an ideal that cuts out the subscheme $AB = BA = \mathrm{Id}$. A more precise definition of this group scheme, as well as a description of the corresponding functor of points will be given in Section 3. For now, it is worth noting that given any affine group scheme $G$ in $\Ver_{p}$, a representation of $G$ on $X$ is the same as a homomorphism of group schemes $G \rightarrow GL(X)$ in $\Ver_{p}$. As a result, these group schemes $GL(X)$ are important building blocks for the representation theory of groups schemes in $\Ver_{p}.$ 

We now state the main results of this paper. The simple objects in $\Ver_{p}$ are the images of indecomposable objects in $\Rep_{k}(\mathbb{Z}/p\mathbb{Z})$ of dimension not divisible by $p$. These indecomposable objects correspond to Jordan blocks of size $1$ through $p-1.$ Hence, we have $p-1$ simple objects $L_{1}, \ldots, L_{p-1}$ in $\Ver_{p}.$ 

\begin{definition} For an affine group scheme $G$ in $\Ver_{p}$ equipped with a homomorphism from the fundamental group $\pi_{1}(\Ver_{p}) \rightarrow G$, let $\Rep(G)$ denote the category of representations of $G$ on objects $X$ in $\Ver_{p}$, such that the restriction to $\pi_{1}(\Ver_{p})$ agrees with the natural action of the fundamental group on $X$. Furthermore, let $\Irrep(G)$ denote the set of irreducible objects in this category.

\end{definition}

\begin{theorem} \label{GL(L)} We have a direct product decomposition $GL(L_{i}) \cong GL(1, \mathbf{k}) \times SL(L_{i})$. Moreover $\Rep(SL(L_{i}))$ is equivalent to $\Ver_{p}^{+}(SL_{i})$.

\end{theorem}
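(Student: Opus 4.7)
The plan is to leverage the equivalence proved in \cite{Ven} between affine group schemes in $\Ver_p$ and triples consisting of an ordinary commutative Hopf algebra, a Lie algebra in $\Ver_p$, and a compatible adjoint action. I would first unpack this data for $GL(L_i)$. Since $L_i$ is simple, any $\pi_1(\Ver_p)$-equivariant automorphism of $A \otimes L_i$, for $A$ an ordinary commutative $\mathbf{k}$-algebra, is forced to be a scalar; hence the ordinary underlying group scheme of $GL(L_i)$ is $GL(1,\mathbf{k}) = \mathbb{G}_m$, while the Lie algebra is $\gl(L_i) = L_i^{*} \otimes L_i$ with the commutator bracket inherited from the tensor structure.

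For the direct product, the key observation is that since $1 \le i \le p-1$, the categorical dimension $i$ is invertible in $\mathbf{k}$, so the trace morphism $\gl(L_i) \to \mathbf{k}$ splits via the scalar inclusion, giving a Lie algebra direct sum $\gl(L_i) = \mathbf{k} \cdot \mathrm{Id} \oplus \sl(L_i)$. The scalars are central, the trace-free summand is adjoint-stable, and the $\mathbb{G}_m$-factor of the ordinary part acts trivially on both summands, so the splitting respects the full triple. Reversing the equivalence of \cite{Ven} then yields $GL(L_i) \cong GL(1,\mathbf{k}) \times SL(L_i)$, where $SL(L_i)$ is defined by the triple with trivial ordinary part, Lie algebra $\sl(L_i)$, and the induced adjoint action.

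For the representation-category statement, the same equivalence of \cite{Ven} identifies $\Rep(SL(L_i))$ with the category of $\sl(L_i)$-modules in $\Ver_p$ whose restriction to $\pi_1(\Ver_p)$ is the canonical one. To match this with $\Ver_p^+(SL_i)$, I would exhibit a braided tensor functor $\Ver_p^+(SL_i) \to \Rep(SL(L_i))$ sending the defining vector object to $L_i$, using the characterization of $\Ver_p^+(SL_i)$ as the semisimplification of the tilting-module category for $SL_i$. A highest-weight or Verma-module analysis for $\sl(L_i)$ in $\Ver_p$, combined with the known fusion ring of $\Ver_p$ itself, should show that this functor hits every irreducible and matches tensor product decompositions, hence is an equivalence.

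The principal obstacle lies in this last identification. Once the abelian summand of $\gl(L_i)$ is noticed, the direct product decomposition is essentially formal via \cite{Ven}. In contrast, the representation-category equivalence requires carrying out a genuine highest-weight theory for $\sl(L_i)$ inside the tensor category $\Ver_p$ and matching its irreducibles with those of $\Ver_p^+(SL_i)$; in particular, one must verify that the ``extra'' weights visible for the ordinary $SL_i$ fail to produce new irreducibles when interpreted inside $\Ver_p$. This truncation phenomenon, which is precisely what selects the connected Verlinde category rather than the whole of $\Rep(SL_i)$, is where I expect the real work to lie.
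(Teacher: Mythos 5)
Your treatment of the direct product decomposition is essentially sound and closely parallels the paper's: you observe the splitting $\gl(L_i) = \mathbf{1}\cdot\mathrm{Id}\oplus\sl(L_i)$ coming from the invertibility of the categorical dimension, and then promote it to the group level via the Harish-Chandra pair equivalence of \cite{Ven}. The paper instead proves a more general scheme-level statement (Theorem \ref{GL-smooth}, that $GL(X)\cong GL(X)_0\times\gl(X)_{\not=0}$ as schemes for arbitrary $X$) and then specializes; your route is arguably more streamlined for the $L_i$ case, though the paper's version does extra work that is reused later. Either way, this half is fine.

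The representation-category half has a genuine gap. You propose a ``highest-weight or Verma-module analysis for $\sl(L_i)$ in $\Ver_p$,'' but $\sl(L_i)=\gl(L_i)_{\not=0}$ satisfies $\sl(L_i)_0=0$: there is no Cartan subalgebra, no torus inside $SL(L_i)$, and hence no notion of weight on which to base a triangular decomposition or a Verma construction. (The paper does run highest-weight theory later, but only for $GL(nL_i)$ with $n\ge 1$, where the maximal torus is $GL(L_i)^n$ — and that construction presupposes that representations of $GL(L_i)$ itself have already been classified, so it cannot be the engine of this theorem.) You correctly flag that ``this is where the real work lies,'' but as stated the plan does not say what replaces the Cartan. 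The paper sidesteps the issue entirely with a fundamental-group argument: it shows $\sl(L_i)$ is a simple Lie algebra (Theorem \ref{Lie-simplicity}), hence $SL(L_i)$ is a simple finite group scheme; it shows $\Ver_p^+(SL_i)$ has no proper nontrivial tensor subcategories (Proposition \ref{subcat}), hence its fundamental group $\pi_+$ is simple; and it then proves the natural map $\phi:\pi_+\to SL(L_i)$ is both injective (by simplicity of $\pi_+$) and surjective (by lifting to $\Ver_p(SL_i)$ and using simplicity of $\sl(X)$ there), after which $\Rep(SL(L_i))\cong\Ver_p^+(SL_i)$ is just Tannakian reconstruction. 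To salvage your approach you would need to either supply an alternative classification mechanism for $U(\sl(L_i))$-modules that does not rely on weights (e.g.\ exploiting that $U(\sl(L_i))^*$ is a finite-length Hopf algebra by Lemma \ref{nilpotence}), or switch to the fundamental-group argument above.
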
 

In this theorem, $SL(L_{i})$ is a certain simple subgroup scheme of $GL(L_{i})$ in $\Ver_{p}$ defined in Section 4, $\Ver_{p}(SL_{i})$ refers to the Verlinde category constructed by semisimplifying the category of tilting modules of $SL_{i}$ for $i < p$, and $\Ver_{p}^{+}(SL_{i})$ is a specific subcategory of this category that we will define later. This theorem allows us to define weights for $GL(X)$ for any object $X$ and use this to classify irreducible representations.

\begin{theorem} \label{GL(nL)} There is a bijection between $\Irrep(GL(nL_{i}))$ and the set

\begin{align*} 
W = \{(\lambda, S_{1}, \ldots, S_{n}) : &\lambda \text{ is a dominant integral weight for $GL_{n}$, } \\
&S_{j} \text{ is an irreducible object in $\Ver_{p}^{+}(SL_{i})$}\}.
\end{align*}

\end{theorem}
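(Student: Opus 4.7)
The plan is to develop a highest-weight theory for $GL(nL_i)$ by parabolic induction from the Levi $L = GL(L_i)^{\times n}$, using Theorem \ref{GL(L)} to decompose the representations of $L$ and the group-to-Lie-algebra dictionary of \cite{Ven} to carry the weight-theoretic machinery into $\Ver_p$. Fix a decomposition $nL_i \cong L_i^{\oplus n}$ and let $P \subset GL(nL_i)$ be the parabolic subgroup scheme stabilizing the standard flag $0 \subset L_i \subset L_i^{\oplus 2} \subset \cdots \subset L_i^{\oplus n}$. Then $P = L \ltimes U$ with $L$ the block-diagonal subgroup and $U$ the strictly upper-triangular block unipotent radical. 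By Theorem \ref{GL(L)} each factor $GL(L_i) \cong GL(1, \mathbf{k}) \times SL(L_i)$ has irreducibles classified by $\mathbb{Z} \times \Irrep(\Ver_p^+(SL_i))$, so irreducibles of $\Rep(L)$ are indexed precisely by tuples $(\chi, S) = (\chi_1, \ldots, \chi_n; S_1, \ldots, S_n)$ with $\chi_j \in \mathbb{Z}$ and $S_j \in \Irrep(\Ver_p^+(SL_i))$. Denote by $\rho_{(\chi, S)}$ the corresponding $L$-irreducible, extended trivially on $U$ to a representation of $P$.

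For each such tuple, form the parabolically induced module
\[
M(\chi, S) = \ind_P^{GL(nL_i)} \rho_{(\chi, S)},
\]
and translate into the Lie algebra setting: by \cite{Ven}, $\gl(nL_i) \cong M_n(\End(L_i))$ carries a triangular decomposition $\gl(nL_i) = \mathfrak{u}^- \oplus \mathfrak{l} \oplus \mathfrak{u}^+$, and a PBW argument in $\Ver_p$ identifies $M(\chi, S)$ with $U(\mathfrak{u}^-) \otimes \rho_{(\chi, S)}$, graded by the $\mathbb{Z}^n$-weights of the split torus $GL_1^{\times n} \subset L$. Any proper subrepresentation is confined to strictly lower weight subobjects, so $M(\chi, S)$ has a unique maximal proper subrepresentation, and hence a unique simple quotient $V(\chi, S)$ whose top weight subobject recovers the $L$-irreducible $\rho_{(\chi, S)}$. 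In particular, the assignment $(\chi, S) \mapsto V(\chi, S)$ is injective on the full parameter set.

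To finish, restrict to the dominant locus $\chi_1 \geq \cdots \geq \chi_n$ and show every simple $V \in \Irrep(GL(nL_i))$ arises this way. The classical subgroup $GL_n \subset GL(nL_i)$ (acting on the multiplicity factor in $nL_i \cong \mathbf{k}^n \otimes L_i$) acts rationally on $V$, so its restriction to $GL_1^{\times n}$ has finitely many weights; the standard $\mathfrak{sl}_2$-argument applied to the simple root subalgebras of the classical $\gl_n$ then forces any maximal weight $\chi$ of $V$ to be dominant. The $\chi$-weight subobject is a nonzero $L$-subrepresentation annihilated by $\mathfrak{u}^+$, so any irreducible $L$-subobject $\rho_{(\chi, S)}$ inside it produces a nonzero map $M(\chi, S) \to V$, which by simplicity of $V$ must factor through the unique simple quotient to give $V \cong V(\chi, S)$.

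The main obstacle is the weight-theoretic analysis of $M(\chi, S)$ inside $\Ver_p$: weight subspaces under $GL_1^{\times n}$ are genuine objects of $\Ver_p$ rather than ordinary vector spaces, so the scalar-dimension arguments that usually establish uniqueness of the irreducible quotient of a Verma module must be replaced by arguments internal to $\Ver_p$. Theorem \ref{GL(L)} is what makes this tractable, as it cleanly separates the classical $GL_n$-type datum $\chi$ from the Verlinde datum $(S_1, \ldots, S_n)$, which is confined to the Levi and never interacts with the PBW filtration on $U(\mathfrak{u}^-)$; this is precisely why no additional $S_n$-equivalence on the $S_j$'s appears in the final classification.
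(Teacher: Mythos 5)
Your strategy coincides with the paper's: set up a triangular decomposition of $GL(nL_i)$, form Verma-type modules induced from the Borel (your $P = L \ltimes U$ with $L = GL(L_i)^{\times n}$ is exactly the paper's $B(nL_i)$ and $T(nL_i)$), extract a unique irreducible quotient by a highest-weight argument, and then pin down dominance by restriction to the classical $GL_n \subset GL(nL_i)$. This is the paper's proof in all essentials.

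However, there is a genuine gap in the well-definedness half of the bijection. You establish (i) injectivity of $(\chi, S) \mapsto V(\chi, S)$ via the highest weight, and (ii) that every object of $\Irrep(GL(nL_i))$ occurs as some $V(\chi, S)$ with $\chi$ dominant. Together these give an injection $\Irrep(GL(nL_i)) \hookrightarrow W$. What is missing is the reverse inclusion: for arbitrary dominant $\chi$ and arbitrary $S_1, \ldots, S_n$, you must verify that $V(\chi, S)$ is a genuine object of $\Irrep(GL(nL_i))$ rather than merely a simple module over the distribution algebra $\mathcal{O}(GL(nL_i))^{\circ}_1$. This requires two nontrivial checks: that $V(\chi, S)$ has finite length as an ind-object of $\Ver_p$, and that the $\mathcal{O}(GL(nL_i))^{\circ}_1$-action integrates to a $GL(nL_i)$-coaction. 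Your PBW identification $M(\chi,S) \cong U(\mathfrak{u}^-) \otimes \rho_{(\chi,S)}$ involves the full $\mathfrak{u}^-$, whose classical part $\mathfrak{n}^-_{GL_n}$ generates an infinite-dimensional enveloping algebra, so nothing forces the quotient to be small without further argument. The paper handles this in Proposition~\ref{GL-rest}: there one exhibits $L(\lambda, S_1, \ldots, S_n)$ as a quotient of $S(\gl(nL_i)_{\neq 0}) \otimes L(\lambda) \boxtimes S_1 \boxtimes \cdots \boxtimes S_n$, which is finite length because $S(\gl(nL_i)_{\neq 0})$ is finite by Lemma~\ref{nilpotence} (nilpotence of symmetric powers of non-invertible simples in $\Ver_p$) and $L(\lambda)$ is a rational $GL_n$-module; integrability is then lifted from $GL_n$ to $GL(nL_i)$ by \cite[Corollary~1.4]{Ven}. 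Your proposal never invokes Lemma~\ref{nilpotence} and never produces a finite-length integrable model for the irreducible quotient, so this step needs to be supplied before the argument is complete. (Separately, your heuristic that the data $(S_1, \ldots, S_n)$ ``never interacts with the PBW filtration on $U(\mathfrak{u}^-)$'' is somewhat misleading, since the strictly lower-triangular blocks of $\gl(nL_i)$ are themselves copies of $\gl(L_i)$ and thus carry Verlinde objects; the real reason no extra equivalence appears is that the highest-weight $T(nL_i)$-subobject, including its ordered tuple of $S_j$'s, is a well-defined invariant of the irreducible.)
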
 

We will also give an explicit construction of the irreducible associated to an element of the above set. To go from $GL(nL_{i})$ to $GL(X)$ is an easy use of parabolic induction and we get the following theorem:

\begin{theorem} \label{GL(X)} Let $X = \displaystyle \bigoplus_{i=1}^{p-1} n_{i}L_{i}.$ Then, $\Irrep(GL(X))$ is in bijection with $\displaystyle\prod_{i=1}^{p-1} \Irrep(GL(n_{i}L_{i})).$

\end{theorem}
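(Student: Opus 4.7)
The plan is to reduce the classification of $\Irrep(GL(X))$ to that of the block-diagonal Levi $L = \prod_{i=1}^{p-1} GL(n_i L_i) \subseteq GL(X)$ via parabolic induction, and then to invoke Theorem~\ref{GL(nL)} to identify $\Irrep(L)$ with the right-hand side of the claim.

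First, I would fix an ordering of the simples $L_{1}, \ldots, L_{p-1}$ and form the flag $F_{k} = \bigoplus_{i \le k} n_{i} L_{i} \subseteq X$. Let $P \subseteq GL(X)$ be the stabilizer of this flag; it admits a Levi decomposition $P = L \ltimes U$, where $U$ is the strictly block upper-triangular unipotent radical, with Lie algebra $\bigoplus_{i<j} n_{i} n_{j} \cdot L_{i}^{*} \otimes L_{j}$. Filtering $U$ by lower blocks realizes it as an iterated extension of abelian group schemes in $\Ver_{p}$, so the results of \cite{Ven} identify $U$ as a connected unipotent group scheme. A Lie--Kolchin style argument then shows that any irreducible $P$-representation is $U$-trivial, yielding $\Irrep(P) = \Irrep(L) = \prod_{i} \Irrep(GL(n_{i} L_{i}))$.

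Next, for each $V = V_{1} \boxtimes \cdots \boxtimes V_{p-1} \in \Irrep(L)$, I would inflate $V$ to $P$ and form $M(V) = \ind_{P}^{GL(X)}(V)$. Since $GL(X)/P$ is a projective partial flag variety in $\Ver_{p}$, this is a well-defined finite-length object in $\Rep(GL(X))$. I would then show $M(V)$ has a unique irreducible quotient $L(V)$ via a highest-weight argument using the combined Borel $B = (\prod_{i} B_{i}) \cdot U$, where $B_{i} \subseteq GL(n_{i} L_{i})$ is the Borel of Theorem~\ref{GL(nL)}. The highest $B$-weight of $M(V)$ matches that of $V$ with the correct multiplicity, so the sum of all proper subobjects of $M(V)$ remains proper.

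Finally, I would show $V \mapsto L(V)$ is a bijection onto $\Irrep(GL(X))$. Injectivity follows because the highest $B$-weight of $L(V)$ recovers $V$ through Theorem~\ref{GL(nL)}. For surjectivity, given $W \in \Irrep(GL(X))$, the unipotence of $U$ forces $W^{U} \ne 0$; this space is an $L$-representation, and choosing any simple $L$-summand $V \subseteq W^{U}$ and applying Frobenius reciprocity for $P \hookrightarrow GL(X)$ yields a nonzero map $M(V) \to W$ whose image must be $L(V) \cong W$. The main obstacle will be to justify these classical highest-weight and Frobenius reciprocity arguments categorically inside $\Ver_{p}$: one must verify exactness and adjunction properties of the parabolic induction functor, and establish the non-vanishing of $U$-invariants on finite-length representations. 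Both should follow from the Lie-theoretic framework of \cite{Ven} applied to the unipotent group scheme $U$, after which the rest of the argument parallels the classical reductive case.
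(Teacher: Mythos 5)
Your overall strategy is the same as the paper's: form the block parabolic $P\subseteq GL(X)$ with Levi $L=\prod_i GL(n_iL_i)$, parabolically induce, take the unique irreducible quotient, and show every irreducible arises this way. The paper executes this at the level of distribution algebras, taking $I_P(V)=\mathcal{O}(GL(X))^\circ_1\otimes_{\mathcal{O}(P)^\circ_1}V$, and it is worth emphasizing that in $\Ver_p$ one gets $P_0=GL(X)_0$ for free (the off-diagonal blocks $L_i\otimes L_j^*$, $i\ne j$, contain no copy of $\mathbf{1}$), so integrability of $I_P(V)$ is automatic.

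There is, however, a genuine gap in your finite-length argument. You justify finite length of $M(V)=\ind_P^{GL(X)}(V)$ by appealing to projectivity of the partial flag variety $GL(X)/P$. That reasoning pertains to the \emph{right} adjoint of restriction (the $H^0$-style induction, which has a simple socle). But the rest of your proof -- the unique irreducible \emph{quotient}, and the Frobenius reciprocity $\Hom_{GL(X)}(M(V),W)\cong\Hom_P(V,W|_P)$ that you use for surjectivity -- requires the \emph{left} adjoint, i.e.\ the Verma-style distribution-algebra induction. For an ordinary reductive group this left adjoint applied to a parabolic is not of finite length, and projectivity of $G/P$ does not help. The correct mechanism, and the one the paper relies on, is that as a module over the opposite unipotent the induced module sits inside $\mathcal{O}(N^-(X))^\circ_1\otimes V \cong S(\mathfrak{n}^-_{\not=0})\otimes V$, and $S(\mathfrak{n}^-_{\not=0})$ is an honest object (not an ind-object) of $\Ver_p$ by Lemma \ref{nilpotence}, since the strict block-lower part has no trivial isotypic component. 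Without this observation the argument does not close. Separately, your Lie--Kolchin step ($\Irrep(P)=\Irrep(L)$) and the claim $W^U\ne 0$ are plausible but not established for group schemes in $\Ver_p$; the paper avoids needing them as standalone facts by inflating $G$-irreducibles directly and running the highest-weight argument (as in the $GL(nL_i)$ case) to get both uniqueness of the quotient and surjectivity, so if you want to keep your cleaner conceptual framing you would need to supply proofs of these in the categorical setting.
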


\subsection{\textbf{Acknowledgements}} I am deeply grateful to Pavel Etingof for suggesting the project and providing valuable guidance.

\section{Technical Background}

\subsection{Notation and Conventions}

\begin{enumerate}

\item[1.] Unless specified otherwise, $\mathbf{k}$ will be an algebraically closed field of characteristic $p>0$.

\item[2.] By a category over $\mathbf{k}$, we mean a $\mathbf{k}$-linear, Artinian category in which the Krull-Schmidt theorem holds.

\item[3.] If $\mathcal{C}$ is a symmetric tensor category, we will always use $c$ to denote the braiding on $\mathcal{C}$. When the objects on which the braiding is acting need to be specified, we will explicitly write $c_{X, Y}$ instead of $c$.

\item[4.] In comparison between a symmetric tensor category $\mathcal{C}$ and its ind-completion $\mathcal{C}^{\ind}$, we will use the word ``object'' to mean an object in $\mathcal{C}$, i.e., one of finite length, and we will use the phrase ``ind-object'' to refer more generally to an object in $\mathcal{C}^{\ind}$, one that may possibly be of infinite length.

\item[5.] For objects $X$ inside $\Ver_{p}^{\ind}$, we will use $X_{0}$ to denote the isotypic component corresponding to the monoidal unit $\mathbf{1}$, and $X_{\not=0}$ to denote the sum of all other isotypic components.

\item[6.] We will use the term affine group scheme of finite type in $\Ver_{p}$ to refer to groups represented by finitely generated commutative Hopf algebras in $\Ver_{p}^{\ind}$. The data of such a group scheme is just the commutative Hopf algebra, but the geometric language is useful for intuition. Given an affine group scheme $G$ in $\Ver_{p}$, we will use $\mathcal{O}(G)$ to refer to its algebra of functions and $\mathfrak{g}$ to refer to its Lie algebra. 

\item[7.] If $G$ is an affine group scheme in a symmetric tensor category $\C$ equipped with a homomorphism $\rho: \pi \rightarrow G$, with $\pi$ the fundamental group of $\C$, then we will use $\Rep(G)$ to always mean the representations of $G$ in $\C$ such that the restriction to $\pi$ via $\rho$ is the same as the natural action of the fundamental group.

\end{enumerate}

\subsection{Construction of the Verlinde Category}

For general details on tensor categories and symmetric tensor categories, we refer the reader to \cite{EGNO}. We also refer to \cite{Ven} and \cite{Ven1} for definitions regarding algebras, Hopf algebras and their geometric properties, along with an explicit construction of the ind-completion of a tensor category. 

In this section, we will provide a detailed description of the universal Verlinde category, as well as a construction of Verlinde categories associated to reductive algebraic groups. This is for the convenience of the reader, as we will use properties of these categories extensively in this paper.

The simplest construction of $\Ver_{p}$ is as the semisimplifcation of the category of finite dimensional $\mathbb{Z}/p\mathbb{Z}$ representations over $\mathbf{k}$. Semisimplification of categories is a general process by which we can start with any symmetric tensor category and obtain a semisimple one that is somewhat universal (see \cite{EO1} for details). To define this semisimplifcation process, we need to define the notion of traces. 

\begin{definition} Let $\mathcal{C}$ be a rigid, symmetric monoidal category over $\mathbf{k}$ in which $\End_{\mathcal{C}}(\mathbf{1}) \cong \mathbf{k}$ (so a symmetric tensor category is a special example). If $f: X \rightarrow X$ is a morphism in $\mathcal{C}$, then the \emph{trace} of $f$ is the scalar given by the morphism

$$\begin{tikzpicture}
\matrix (m) [matrix of math nodes,row sep=4em,column sep=4em,minimum width=4em]
{\mathbf{1} & X \otimes X^{*} & X \otimes X^{*} & X^{*} \otimes X & \mathbf{1}  \\};
\path[-stealth]
(m-1-1) edge node[auto] {$\coev_{X}$} (m-1-2)
(m-1-2) edge node[auto] {$f \otimes \mathrm{id}_{X^{*}}$} (m-1-3)
(m-1-3) edge node[auto] {$c_{X, X^{*}}$} (m-1-4)
(m-1-4) edge node[auto] {$\mathrm{ev}_{X}$} (m-1-5); 
\end{tikzpicture}$$
in $\End_{\mathcal{C}}(\mathbf{1}) \cong \mathbf{k}.$ We use $\tr(f)$ to denote the trace of $f$.

\end{definition}

\begin{definition} If $\mathcal{C}$ is a category as above, then for any $X, Y \in \mathcal{C}$, the space of \emph{negligible} morphisms $\mathcal{N}(X, Y) \subseteq \Hom_{\mathcal{C}}(X, Y)$ consists of the morphisms $f: X \rightarrow Y$ such that for all $g: Y \rightarrow X$, $\tr(g \circ f) = 0$.

\end{definition}

We can also define a categorical dimension of objects and negligible objects.

\begin{definition} Let $\mathcal{C}$ be a category as above. The \emph{categorical dimension} of $X \in \mathcal{C}$, denoted $\dim(X)$, is $\tr(\mathrm{id}_{X}).$ We say that $X$ is \emph{negligible} if $\mathrm{id}_{X}$ is a negligible morphism. For indecomposable $X$, this is equivalent to $\dim(X) = 0$.

\end{definition}

\begin{proposition} $\mathcal{N}(X, Y)$ is a tensor ideal, i.e., it is an abelian subcategory closed under compositions and tensor products with arbitrary morphisms in $\mathcal{C}$. 







\end{proposition}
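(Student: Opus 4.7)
The plan is to verify three properties of $\mathcal{N}$: (i) each $\mathcal{N}(X,Y)$ is a $\mathbf{k}$-linear subspace of $\Hom_{\mathcal{C}}(X,Y)$; (ii) $\mathcal{N}$ is closed under pre- and post-composition with arbitrary morphisms (this gives the ``ideal'' part and the ``abelian subcategory'' part together); (iii) $\mathcal{N}$ is closed under tensoring with arbitrary morphisms. The first item is immediate from $\mathbf{k}$-bilinearity of composition combined with $\mathbf{k}$-linearity of the trace: if $f_1,f_2\in\mathcal{N}(X,Y)$ and $\alpha,\beta\in\mathbf{k}$, then for every $g:Y\to X$ one has $\tr(g\circ(\alpha f_1+\beta f_2))=\alpha\tr(g\circ f_1)+\beta\tr(g\circ f_2)=0$.

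For closure under composition, I would use the cyclicity property of the trace in a rigid symmetric monoidal category: $\tr(\alpha\circ\beta)=\tr(\beta\circ\alpha)$ whenever both composites are defined. Suppose $f:X\to Y$ is negligible and $h:Z\to X$ is arbitrary, and let $g:Y\to Z$ be any morphism; then $\tr(g\circ f\circ h)=\tr((h\circ g)\circ f)=0$ because $h\circ g:Y\to X$ and $f$ is negligible. The post-composition case $h:Y\to Z$ is symmetric: for any $g:Z\to X$, $\tr(g\circ h\circ f)=0$ since $g\circ h:Y\to X$.

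The main step is closure under tensor product, which is where rigidity enters seriously. Given $f:X\to Y$ negligible and $h:Z\to W$ arbitrary, I want $\tr\bigl(g\circ(f\otimes h)\bigr)=0$ for every $g:Y\otimes W\to X\otimes Z$. Factor $f\otimes h=(f\otimes\mathrm{id}_W)\circ(\mathrm{id}_X\otimes h)$ and use cyclicity to write
\[
\tr\bigl(g\circ(f\otimes h)\bigr)=\tr\bigl((\mathrm{id}_X\otimes h)\circ g\circ(f\otimes\mathrm{id}_W)\bigr).
\]
Now I would apply the standard partial-trace identity (a consequence of the snake relations for $\coev_W$ and $\mathrm{ev}_W$ together with symmetry of $c$) that says $\tr\bigl(\Phi\circ(\phi\otimes\mathrm{id}_W)\bigr)=\tr(\phi\circ\tr_W(\Phi))$ for $\phi:X\to Y$ and $\Phi:Y\otimes W\to X\otimes W$, where $\tr_W(\Phi):Y\to X$ is the partial trace. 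Applying this with $\Phi=(\mathrm{id}_X\otimes h)\circ g$ and $\phi=f$ yields $\tr(f\circ g')=0$ where $g'=\tr_W\bigl((\mathrm{id}_X\otimes h)\circ g\bigr):Y\to X$, vanishing because $f$ is negligible. The symmetric case (tensoring on the left) is identical.

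The main obstacle will be pinning down the partial-trace identity cleanly; once it is in hand, the proof is straightforward diagram chasing in the rigid symmetric monoidal setting. In a diagrammatic/string-diagram proof this is transparent, but writing it out formally requires invoking the coherence of the evaluation/coevaluation together with the braiding $c_{X,X^*}$ used in the definition of trace, which I would do once and reuse.
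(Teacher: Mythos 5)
The paper gives no proof of this proposition; it simply cites \cite{EO1}[Lemma 2.3]. Your argument is correct and is, modulo bookkeeping, the standard one. Linearity is immediate as you say; closure under composition follows from cyclicity of the trace; and the interesting part, closure under tensoring, is handled correctly via the factorization $f\otimes h=(f\otimes\mathrm{id}_W)\circ(\mathrm{id}_X\otimes h)$, cyclicity, and the partial-trace identity $\tr(\Phi\circ(\phi\otimes\mathrm{id}_W))=\tr(\phi\circ\tr_W(\Phi))$. That identity in turn follows from the two facts $\tr(\Psi)=\tr(\tr_W(\Psi))$ for $\Psi\in\End(X\otimes W)$ and $\tr_W(\Phi\circ(\phi\otimes\mathrm{id}_W))=\tr_W(\Phi)\circ\phi$ (naturality of $\coev_W$), both routine in a rigid symmetric monoidal category once the coherence of evaluation, coevaluation and the braiding $c$ is in place. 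The one direction you leave implicit, tensoring on the left ($h\otimes f$), does indeed reduce to the case you handled either by symmetry of the braiding or by taking a partial trace over the other tensor factor, so nothing is missing.
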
 

\begin{proof} See \cite{EO1}[Lemma 2.3]. \end{proof}

\begin{definition} Given a locally finite, rigid, symmetric monoidal additive category $\mathcal{C}$ with $\End_{\mathcal{C}}(\mathbf{1}) \cong \mathbf{k}$, the quotient category $\overline{\mathcal{C}}$ which has the same objects as $\mathcal{C}$ but in which $\Hom_{\overline{\mathcal{C}}}(X, Y) \cong \Hom_{\mathcal{C}}(X, Y)/\mathcal{N}(X, Y)$
is called the \emph{semisimplification} of $\mathcal{C}$. 

\end{definition}

Here are some important properties of $\overline{\mathcal{C}}$ that can be looked up in \cite{EO1}.

\begin{enumerate}

\item[1.] $\overline{\mathcal{C}}$ is semisimple and hence abelian. Thus, it is a semisimple symmetric tensor category. The monoidal structure on $\overline{\mathcal{C}}$ is induced from that on $\mathcal{C}$ as $\mathcal{N}(X, Y)$ is a tensor ideal.

\item[2.] The simple objects of $\overline{\mathcal{C}}$ are the images under the quotient functor of the indecomposable objects of $\mathcal{C}$ that are not negligible.

\end{enumerate}

\begin{definition} The \emph{Verlinde category} is the semisimplification of the category of finite dimensional $\mathbf{k}$-representations of $\mathbb{Z}/p\mathbb{Z}$. We denote this category as $\Ver_{p}$.

\end{definition}

Let us now describe the additive and monoidal structure of $\Ver_{p}$ and give some other useful representation theoretic constructions associated to it. Proofs of these facts are omitted here. They can be looked up in \cite{O} and \cite{GK, GM}. This description is more about building some intuition for $\Ver_{p}$. 

\begin{example} \label{Verlindeprop}

\begin{enumerate}

\item[1.] A representation of $\mathbb{Z}/p\mathbb{Z}$ is simply a matrix whose $p$th power is the identity. The indecomposable representations of $\mathbb{Z}/p\mathbb{Z}$ are the indecomposable Jordan blocks of eigenvalue $1$ and sizes $1$ through $p$. Let us call these representations $M_{1}, \ldots, M_{p}$. The categorical dimension of $M_{i}$ is simply its dimension mod $p$. Hence, $M_{p}$ is the only negligible indecomposable. Thus, the simple objects of $\Ver_{p}$ are: $L_{1}, \ldots, L_{p-1}$, where $L_{i}$ is the image under semisimplification of the indecomposable Jordan block of dimension $i$. 

\item[2.] To describe the monoidal structure, we need to describe the decomposition of $L_{i} \otimes L_{j}$ into direct sum of simples: $L_{i} \otimes L_{j} \cong \displaystyle\bigoplus_{k=1}^{\min(i, j, p-i, p-j)} L_{|j-i| + 2k - 1}.$
This rule is easiest to remember in terms of representations of $SL_{2}(\mathbb{C})$. Let $V_{i}$ be the irreducible representation of $SL_{2}(\mathbb{C})$ of dimension $i$. The decomposition of $L_{i} \otimes L_{j}$ is the same as the decomposition of $V_{i} \otimes V_{j}$ with some irreducibles removed: we remove any representation of dimension $\ge p$ and if $V_{p+r}$ was removed, we also remove $V_{p-r}$.

\item[3.] The subcategory additively generated by $L_{i}$ for $i$ odd is a fusion subcategory, which we denote by $\Ver_{p}^{+}$. Additionally, the subcategory additively generated by $L_{1}$ and $L_{p-1}$ is also a fusion subcategory and is equivalent to $\sVec$.

\end{enumerate}

\end{example}

\subsection{Verlinde categories associated to reductive algebraic groups}

This relationship of $\Ver_{p}$ with the representations of $SL_{2}$ is not accidental, it comes from a relationship between $\Ver_{p}$ and tilting modules for $SL_{2}(\mathbf{k})$ described in \cite{O}[3.2, 4.3] and the additional references \cite{GK, GM} contained within. Consider the category of rational $\mathbf{k}$-representations of a simple algebraic group $G$ of Coxeter number less than $p$. This has a full subcategory consisting of tilting modules that is a rigid, locally finite, Karoubian symmetric monoidal category. Hence, we can still take its quotient by negligible morphisms. 

\begin{definition} The \emph{Verlinde category associated to $G$} is the semisimplification of the category of tilting modules for $G$. This is denoted $\Ver_{p}(G)$.

\end{definition} 

We want to partially describe the structure of $\Ver_{p}(SL_{i})$ here for $i < p$ (see \cite{GK}, \cite{GM} for details of the construction and fusion rules). The simple objects in the category are the images under the semisimplification functor of those indecomposable tilting modules whose dimensions are not divisible by $p$. Indecomposable tilting modules are labelled by partitions with up to $i-1$ rows and the modules that survive under semisimplifcation correspond to partitions whose first row is $\le p-i$. If $\lambda$ and $\lambda'$ are two partitions whose size is divisible by $i$, then the tensor product of the corresponding tilting modules is a direct sum of tilting modules corresponding to partitions with the same property. Hence, we can make the following definition.

\begin{definition} Let $\Ver_{p}^{+}(SL_{i})$ be the tensor subcategory of $\Ver_{p}(SL_{i})$ consisting of those simple objects corresponding to the irreducible $SL_{i}$-modules whose highest weights correspond to partitions of total size $0$ mod $i$.

\end{definition}

We end this section with a description of the relation between $\Ver_{p}(SL_{i})$ and $\Ver_{p}$. Restriction to the principal $SL_{2}$-subgroup sends tilting modules to tilting modules and preserves categorical dimension. Hence, it sends negligible morphisms to negligible morphisms and thus descends to a functor $\Ver_{p}(SL_{i}) \rightarrow \Ver_{p}(SL_{2})$. In \cite[4.3]{O}, Ostrik showed that $\Ver_{p}(SL_{2}) \cong \Ver_{p}$, with the functor being induced by restriction to the generator 
$\left(\begin{array}{cc} 
1 & 1\\
0 & 1 \end{array} \right) \in SL_{2}(\mathbf{k}).$
Hence, restriction to the principal $SL_{2}$ defines a functor from $\Ver_{p}(SL_{i})$ to $\Ver_{p}$ that sends the tautological representation to $L_{i}$ (see \cite[4.3]{O} for more details). Since the symmetric power $S^{p-i+1}$ of the tautological representation is negligible, this proves the important structural result first proved in \cite{Ven1}.

\begin{lemma} \label{nilpotence} For $i < p$, let $L_{i}$ be the simple object in $\Ver_{p}$ corresponding to the indecomposable representation of $\mathbb{Z}/p\mathbb{Z}$ of dimension $i
$. For $i > 1$, $S^{N}(L_{i}) = 0 \text{ for } N > p -i.$

\end{lemma}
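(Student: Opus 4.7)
The plan is to transport the vanishing from $\Ver_{p}(SL_{i})$ to $\Ver_{p}$ via the symmetric tensor functor $F \colon \Ver_{p}(SL_{i}) \to \Ver_{p}$ constructed just above the lemma from restriction to the principal $SL_{2}$, which sends the tautological representation $V$ of $SL_{i}$ to $L_{i}$. Since $F$ is symmetric monoidal it commutes with symmetric powers, so $F(S^{N}V) = S^{N}L_{i}$. Thus it suffices to show that $S^{p-i+1}V$ vanishes in $\Ver_{p}(SL_{i})$ and then to propagate this vanishing to all $N > p-i$ using semisimplicity of $\Ver_{p}$.

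For the base case $N = p-i+1$, I would first observe that $N \le p-1$ because $i \ge 2$, so $N!$ is invertible in $\mathbf{k}$ and the usual symmetrizer idempotent realizes $S^{N}V$ as a direct summand of $V^{\otimes N}$. Since $V = T(\omega_{1})$ is tilting and tensor products of tiltings are tiltings, $V^{\otimes N}$ is tilting, and hence so is its summand $S^{N}V$. On the other hand, $S^{N}V$ has a one-dimensional highest weight space of weight $N\omega_{1}$, spanned by $v_{1}^{N}$; a standard argument using the distribution algebra of $SL_{i}$ shows this vector generates the whole of $S^{N}V$, so any direct summand containing it must be everything, and $S^{N}V$ is indecomposable. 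An indecomposable tilting module is determined by its highest weight, giving $S^{N}V \cong T(N\omega_{1})$, the tilting module labelled by the one-row partition $(p-i+1)$. Since this partition has first row strictly greater than $p-i$, the recalled description of non-negligible simples in $\Ver_{p}(SL_{i})$ tells us that $T((p-i+1))$ is negligible. Therefore $S^{p-i+1}V = 0$ in $\Ver_{p}(SL_{i})$, and applying $F$ yields $S^{p-i+1}L_{i} = 0$ in $\Ver_{p}$.

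The extension to all $N > p-i$ is an immediate induction. For any $N \ge p-i+2$, the multiplication in the symmetric algebra $S(L_{i})$ furnishes a surjection $L_{i} \otimes S^{N-1}L_{i} \twoheadrightarrow S^{N}L_{i}$ in $\Ver_{p}$; since the source vanishes by the inductive hypothesis, so does the target.

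The main potential obstacle is the identification $S^{N}V \cong T(N\omega_{1})$: one must verify simultaneously that $S^{N}V$ is tilting (which is why the argument is staged at $N = p-i+1 < p$ rather than at some larger $N$ where the symmetrizer no longer exists) and that it is indecomposable in characteristic $p$, which requires working with the distribution algebra rather than the restricted enveloping algebra. Once this identification is in place, the rest of the proof is formal, reducing the lemma to the combinatorial observation that $(p-i+1)$ already lies outside the window $\le p-i$ that cuts out $\Ver_{p}(SL_{i})$.
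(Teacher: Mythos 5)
Your proof is correct and follows the same route the paper takes: transport through the symmetric tensor functor $\Ver_p(SL_i)\to\Ver_p$ coming from restriction to the principal $SL_2$, together with the negligibility of $S^{p-i+1}$ of the tautological representation. The paper treats the negligibility of $S^{p-i+1}V$ and the reduction to $\Ver_p(SL_i)$ as known (citing the prior discussion and \cite{Ven1}), whereas you carefully supply the justification---identifying $S^{p-i+1}V$ as the indecomposable tilting module $T((p-i+1)\omega_1)$ via the symmetrizer idempotent (valid since $p-i+1<p$) and the generation-by-highest-weight-vector argument, then noting that the one-row partition $(p-i+1)$ falls outside the window $\le p-i$---and you correctly handle the passage from $N=p-i+1$ to general $N>p-i$ via the surjection $L_i\otimes S^{N-1}L_i\twoheadrightarrow S^N L_i$.
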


\subsection{Affine group schemes in $\Ver_{p}$}

In this section, we define some important constructions related to affine group schemes in $\Ver_{p}$. Let $G$ be an affine group scheme of finite type in $\Ver_{p}$ and let $\mathcal{O}(G)$ be its Hopf algebra of regular functions. 

\begin{definition} Given a commutative algebra $A$ in $\Ver_{p}^{\ind}$, we define the \emph{underlying ordinary commutative algebra} to be $A/I$ where $I$ is the ideal generated by $A_{\not=0}$, and denote it by $\overline{A}$.

\end{definition}

\begin{definition} The \emph{underlying ordinary affine group scheme $G_{0}$} is the affine group scheme over $\mathbf{k}$ with algebra of functions $\mathcal{O}(G)/I$, where $I$ is the ideal generated by $\mathcal{O}(G)_{\not=0}$. This is a closed subgroup scheme of $G$. 

\end{definition}

By Lemma \ref{nilpotence}, $\mathcal{O}(G)$ is a nilpotent thickening of $\mathcal{O}(G_{0})$. We also have a distribution algebra associated to $G$.

\begin{definition} Let $I$ be the augmentation ideal of $\mathcal{O}(G)$, i.e., the kernel of the counit map. Then, the \emph{distribution algebra} is $\mathcal{O}(G)^{\circ}_{1}:= \displaystyle\bigcup_{n = 0}^{\infty} (\mathcal{O}(G)/I^{n})^{*}$.

\end{definition} 

\begin{remark} We use the notation $\mathcal{O}(G)^{\circ}_{1}$ instead of $\mathcal{O}(G)^{\circ}$ because conventionally, $\mathcal{O}(G)^{\circ}$ is used to refer to the full dual coalgebra that is the directed union of $(\mathcal{O}(G)/J)^{*}$ over all cofinite ideals $J$, and $\mathcal{O}(G)^{\circ}_{1}$ is the irreducible component of $\mathcal{O}(G)$ containing the unit. \end{remark}

\begin{definition} A \emph{Lie algebra} in $\Ver_{p}$ is an object $X$ equipped with a bracket map $[, ]: \wedge^{2}X \rightarrow X$ that satisfies the Jacobi identity, along with an identity in degree $p$ analogous to the identity $[x, x] = 0$ that is required in characteristic $2$ (see \cite[Section 4.6]{Et} for a precise definition).

\end{definition}

This is a filtered cocommutative Hopf algebra in $\Ver_{p}^{\ind}$. Additionally, in \cite[Proposition 4.35]{Ven}, we proved that the subobject of primitives in $\mathcal{O}(G)_{1}^{\circ}$ is closed under the commutator bracket and has finite length. So, it is a Lie algebra in $\Ver_{p}$. 

\begin{definition} The \emph{Lie algebra} of $G$, denoted $\mathfrak{g}$, is the subobject of primitives in the distribution algebra.

\end{definition}

To end this section, we state important PBW decompositions on $\mathcal{O}(G)$ and $\mathcal{O}(G)^{\circ}_{1}$ proved in \cite{Ven} (Theorem 6.15 and Lemma 7.15).

\begin{theorem} \label{PBW} Let $G$ be an affine group scheme of finite type in $\Ver_{p}$. Let $G_{0}$ be the underlying ordinary group scheme and let $\mathfrak{g}$ be the Lie algebra of $G$. Then, in $\Ver_{p}^{\ind}$, $\mathcal{O}(G) \cong \mathcal{O}(G_{0}) \otimes S(\mathfrak{g}_{\not=0}^{*})$
as left $\mathcal{O}(G_{0})$-comodule algebras and $\mathcal{O}(G)^{\circ}_{1} \cong \mathcal{O}(G_{0})^{\circ}_{1} \otimes S(\mathfrak{g}_{\not=0})$
as left $\mathcal{O}(G_{0})^{\circ}_{1}$-modules coalgebras. The latter isomorphism can be found by taking associated graded under the filtration on $\mathcal{O}(G)^{\circ}_{1}$ obtained by putting $\mathcal{O}(G_{0})^{\circ}_{1}$ in degree $0$ and putting $\mathfrak{g}_{\not=0}$ in degree $1$.

\end{theorem}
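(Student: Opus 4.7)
My plan is to approach both isomorphisms in parallel, reducing each to a statement about associated graded using the nilpotence of the non-trivial isotypic part. The starting observation is Lemma \ref{nilpotence}: any object in $\Ver_p^{\ind}$ supported away from the unit has vanishing symmetric powers in sufficiently large degree, so the ideal $J \subset \mathcal{O}(G)$ generated by $\mathcal{O}(G)_{\neq 0}$ is nilpotent, the quotient $\mathcal{O}(G) \twoheadrightarrow \mathcal{O}(G_0)$ is a nilpotent thickening, and dually the filtration on $\mathcal{O}(G)^\circ_1$ with $\mathcal{O}(G_0)^\circ_1$ in degree $0$ and $\mathfrak{g}_{\neq 0}$ in degree $1$ is exhaustive and has only finitely many nonzero pieces.

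For the first isomorphism, I would first construct an algebra map $\sigma : S(\mathfrak{g}_{\neq 0}^*) \to \mathcal{O}(G)$. Writing $I$ for the augmentation ideal of $\mathcal{O}(G)$ and using $I/I^2 \cong \mathfrak{g}^*$, any linear lift $\mathfrak{g}_{\neq 0}^* \to I \cap \mathcal{O}(G)_{\neq 0}$ automatically extends to $S(\mathfrak{g}_{\neq 0}^*)$ because $\mathcal{O}(G)$ is commutative with respect to the braiding of $\Ver_p$. To promote $\sigma$ to a left $\mathcal{O}(G_0)$-comodule algebra isomorphism $\mathcal{O}(G_0) \otimes S(\mathfrak{g}_{\neq 0}^*) \to \mathcal{O}(G)$, I would exploit $G_0$-equivariance: translating $\sigma$ by the left $\mathcal{O}(G_0)$-coaction on $\mathcal{O}(G)$ extends it to a comodule map out of $\mathcal{O}(G_0) \otimes S(\mathfrak{g}_{\neq 0}^*)$. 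Bijectivity then follows by a Nakayama-type induction on powers of $J$, since on associated graded one obtains maps $\mathcal{O}(G_0) \otimes S^k(\mathfrak{g}_{\neq 0}^*) \to J^k/J^{k+1}$ that are surjective by definition of $J$ and whose injectivity reduces to a dimension count expected of a formally smooth nilpotent thickening.

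The second isomorphism follows by dualizing. Filtering $\mathcal{O}(G)^\circ_1$ as indicated in the statement, the commutators $[x,y]$ of two elements in $\mathfrak{g}_{\neq 0}$ land in $\mathfrak{g}$, which sits in degree at most $1$, so in $\mathrm{gr}\,\mathcal{O}(G)^\circ_1$ the products of $\mathfrak{g}_{\neq 0}$ elements commute with respect to the braiding, and the subalgebra they generate is a quotient of $S(\mathfrak{g}_{\neq 0})$. Pairing against the first isomorphism through the canonical duality between $\mathcal{O}(G)/I^n$ and the order-$n$ piece of $\mathcal{O}(G)^\circ_1$ shows this quotient is an equality and identifies the associated graded with $\mathcal{O}(G_0)^\circ_1 \otimes S(\mathfrak{g}_{\neq 0})$ as a left $\mathcal{O}(G_0)^\circ_1$-module coalgebra, which is precisely the stated form of the isomorphism.

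The main obstacle I expect is the $G_0$-equivariant extension of $\sigma$ in the first part. Classically one would invoke formal smoothness of a nilpotent thickening over a smooth base, but here $G_0$ itself may be infinitesimal, so one cannot simply reduce to smooth geometry. The remedy should come from combining the braided commutativity of $\mathfrak{g}_{\neq 0}^*$ with general Hopf-algebraic results about cleft extensions of the commutative Hopf algebra $\mathcal{O}(G_0)$; once the comodule section is in place, nilpotence of $J$ reduces everything to the associated-graded dimension count.
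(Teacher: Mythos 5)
The paper does not actually prove this theorem: it is imported verbatim from \cite{Ven} (Theorem 6.15 and Lemma 7.15), so there is no in-text proof against which to compare your attempt. Evaluating your sketch on its own merits, the overall shape (exploit nilpotence of $\mathcal{O}(G)_{\neq 0}$, pass to associated graded, pair the two halves of the theorem against each other via $\mathcal{O}(G)/I^{n+1} \leftrightarrow$ the order-$n$ piece of $\mathcal{O}(G)^\circ_1$) is a sensible plan, and your observation that $[\mathfrak{g}_{\neq 0}, \mathfrak{g}_{\neq 0}] \subseteq \mathfrak{g}$ lies in filtration degree $\leq 1$ is exactly the right reason why $\mathrm{gr}\, \mathcal{O}(G)^\circ_1$ receives a surjection from $\mathcal{O}(G_0)^\circ_1 \otimes S(\mathfrak{g}_{\neq 0})$.

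However, the step you yourself flag as the ``main obstacle'' is precisely where the substance of the theorem lives, and your proposed remedy is not yet a proof. Two concrete problems. First, you need a map $\mathcal{O}(G_0) \otimes S(\mathfrak{g}_{\neq 0}^*) \to \mathcal{O}(G)$ that is simultaneously an algebra map and an $\mathcal{O}(G_0)$-comodule map; but the only canonical algebra map between $\mathcal{O}(G)$ and $\mathcal{O}(G_0)$ is the quotient, going the wrong way. Appealing to ``cleft extensions'' would give a convolution-invertible comodule map $\gamma : \mathcal{O}(G_0) \to \mathcal{O}(G)$, but not an algebra map; this only gets you a crossed product $S(\mathfrak{g}_{\neq 0}^*) \#_\sigma \mathcal{O}(G_0)$, and you would still need to argue that the resulting $2$-cocycle is trivial (or, alternatively, show that the coinvariant subalgebra is exactly $S(\mathfrak{g}_{\neq 0}^*)$), which you have not done. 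Second, the injectivity of $\mathcal{O}(G_0) \otimes S^k(\mathfrak{g}_{\neq 0}^*) \to J^k/J^{k+1}$ is not a ``dimension count expected of a formally smooth nilpotent thickening'' -- that phrasing is circular, since freeness of $J/J^2$ and of its divided/symmetric powers over $\mathcal{O}(G_0)$ is exactly the content you need to establish. In the ordinary (super)geometric case this is a genuine theorem relying on the Hopf algebra structure (not just on nilpotence of $J$), and in $\Ver_p$ one additionally needs the PBW/Koszul machinery of \cite{Et} or the structural results of \cite{Ven1} to control $S(\mathfrak{g}_{\neq 0}^*)$. As written, your argument assumes the conclusion at this step.
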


\section{Construction of $GL(X)$}

In this section, unless specified otherwise, we will work in a general symmetric tensor category $\C$ over our field $\mathbf{k}$. Given an object $X$ in $\C$ of finite length, we can define $GL(X)$ as an affine group scheme of finite type in $\C$. The multiplication map on $\mathfrak{gl}(X) = X \otimes X^{*}$ defines a map of commutative algebras in $\C^{\ind}$

$$m^{*}: S([X \otimes X^{*}]^{*}) \rightarrow S([X \otimes X^{*}]^{*}) \otimes  S([X \otimes X^{*}]^{*})$$
and we also have a map $\coev_{X}^{*}: S([X \otimes X^{*}]^{*}) \rightarrow \mathbf{1}$
which is morally the map defining the inclusion of the identity matrix into $\mathfrak{gl}(X)$. Let $K$ be the kernel of the latter map.

\begin{definition} $\mathcal{O}(GL(X))$ is the quotient of $S[(X \otimes X^{*})^{*}] \otimes  S[(X \otimes X^{*})^{*}]$ by the ideal generated by the image of $m^{*}(K)$ and $(c \circ m^{*})(K)$.

\end{definition}

This has a Hopf algebra structure with comultiplication induced by multiplication on $X \otimes X^{*}$, counit being the projection onto $\mathbf{1}$ and antipode being the braiding swapping the tensor factors. It is also useful to understand the functor of points represented by $GL(X)$.

\begin{proposition} Let $A$ be a commutative ind-algebra in $\C$. Then, 

$$\Hom_{\mathrm{alg}}(\mathcal{O}(GL(X)), A) = \{A-\text{module automorphisms of } A \otimes X\}.$$

\end{proposition}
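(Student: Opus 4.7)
The plan is to unwind the definition of $\mathcal{O}(GL(X))$ via a chain of universal properties and dualities. First, by the universal property of the free commutative algebra, an algebra homomorphism $S[(X \otimes X^*)^*] \otimes S[(X \otimes X^*)^*] \to A$ is the same as a pair of morphisms $(X \otimes X^*)^* \to A$ in $\mathcal{C}^{\ind}$. By the duality between $X \otimes X^*$ and $(X \otimes X^*)^*$ together with the symmetry of the tensor product, each such morphism corresponds to an element of $\Hom(\mathbf{1}, A \otimes X \otimes X^*)$, which by adjunction is canonically $\Hom_{A}(A \otimes X, A \otimes X)$, the space of $A$-linear endomorphisms of the free $A$-module on $X$. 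Thus the starting point is a bijection between algebra homomorphisms out of $S[(X \otimes X^*)^*] \otimes S[(X \otimes X^*)^*]$ and ordered pairs $(\phi, \psi)$ of $A$-linear endomorphisms of $A \otimes X$.

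Next I would identify how the two structural maps translate under this bijection. The map $\coev_X^*$, viewed as an algebra homomorphism from $S[(X \otimes X^*)^*]$ to $\mathbf{1}$ (and then pushed into $A$ via the unit), corresponds to the identity endomorphism of $A \otimes X$, since $\coev_X$ is exactly the element of $X \otimes X^*$ representing $\mathrm{id}_X$. Dually, the composition map $\mathrm{End}(X) \otimes \mathrm{End}(X) \to \mathrm{End}(X)$, built from $\mathrm{id}_X \otimes \mathrm{ev}_X \otimes \mathrm{id}_{X^*}$ on $X \otimes X^* \otimes X \otimes X^*$, dualizes to $m^*$. A diagram chase using the duality and the adjunction then shows that, for a pair $(\phi, \psi)$ corresponding to algebra maps $\phi^*, \psi^*: S[(X \otimes X^*)^*] \to A$, the composite $(\phi^* \otimes \psi^*) \circ m^*$ corresponds to the composition $\psi \circ \phi$ in $\End_A(A \otimes X)$; inserting the braiding $c$ reverses this to $\phi \circ \psi$.

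With these two identifications in hand, the remaining step is essentially formal. The subobject $K \subseteq S[(X \otimes X^*)^*]$ is characterized as the set of elements that map to zero under any algebra map that encodes the identity endomorphism. Hence the condition that the ideal generated by $m^*(K)$ lies in the kernel of the pair $(\phi^*, \psi^*)$ is equivalent to saying that $(\phi^* \otimes \psi^*) \circ m^*$ factors through $\coev_X^*$, i.e., $\psi \circ \phi = \mathrm{id}_{A \otimes X}$. Analogously, the vanishing of the ideal generated by $(c \circ m^*)(K)$ is equivalent to $\phi \circ \psi = \mathrm{id}_{A \otimes X}$. Imposing both relations thus cuts down pairs of $A$-linear endomorphisms to pairs of mutually inverse ones, which is exactly the set of $A$-module automorphisms of $A \otimes X$.

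I expect the only real technical point to be the precise verification, via a string-diagram or coherence argument, that $m^*$ really does encode composition of endomorphisms under the adjunction $\Hom(\mathbf{1}, A \otimes X \otimes X^*) \cong \End_A(A \otimes X)$. Once this is pinned down, everything else is a bookkeeping exercise in the universal properties of the symmetric algebra and of the quotient by an ideal; in particular, no extra finiteness hypothesis on $A$ is required since all the constructions are natural in $A$ and commute with filtered colimits.
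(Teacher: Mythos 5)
Your argument is essentially the paper's proof: both identify algebra homomorphisms out of $S[(X\otimes X^*)^*]\otimes S[(X\otimes X^*)^*]$ with pairs of $A$-module endomorphisms of $A\otimes X$ (via duality and the free--forgetful adjunction for the symmetric algebra), then observe that killing the ideals generated by $m^*(K)$ and $(c\circ m^*)(K)$ is exactly the condition that the two endomorphisms be mutually inverse. The paper states this more tersely and leaves the diagram chase showing that $m^*$ dualizes to composition implicit, but there is no difference in substance or route.
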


\begin{proof} Maps out of $\mathcal{O}(GL(X))$ are a subset of maps out of $\mathcal{O}(\mathfrak{gl}(X) \times \mathfrak{gl}(X))$, specifically those maps that kill $m^{*}(K)$ and $(c \circ m^{*})(K) $. Now,

$$\Hom_{\mathrm{alg}}(\mathcal{O}(\mathfrak{gl}(X)), A) = \Hom_{\Ver_{p}}(X \otimes X^{*}, A) = \Hom_{\Ver_{p}}(X, A \otimes X) = \Hom_{A}(A \otimes X, A \otimes X).$$
Hence,

$$\Hom_{\mathrm{alg}}(\mathcal{O}(\mathfrak{gl}(X) \times \mathfrak{gl}(X)), A) = \Hom_{A}(A \otimes X, A \otimes X)^{\oplus 2}.$$

The requirement that these homomorphisms kill $m^{*}(K)$ and $(c \circ m^{*})(K)$ is precisely the condition that as $A$-module homomorphisms, $f\circ g = g\circ f=  \mathrm{id}_{A \otimes X}.$  Hence, 

$$GL(X)(A) = \Hom_{\mathrm{alg}}(\mathcal{O}(GL(X)), A) = \{A-\text{module automorphisms of } A \otimes X\}.$$

\end{proof}

\begin{remark} Informally, the ideal generated by $m^{*}(K)$ is cutting out the fiber above the identity of the multiplication map on $\mathfrak{gl}(X) \times \mathfrak{gl}(X)$. Hence, if we think of $\mathbf{A}$ and $\mathbf{B}$ as ``elements'' of $\mathfrak{gl}(X)$, then the ideal is imposing the relation $\mathbf{A}\mathbf{B} =  \mathbf{B}\mathbf{A} = \mathrm{id} \in \mathfrak{gl}(X)$. Moreover, if $X$ has finite length, we only need the relation $\mathbf{A}\mathbf{B} = \mathrm{id}$. To see this, it is sufficient to check that this relation implies the other at the level of the functor of points applied to finite length (local) commutative algebras $R \in \C$. For such algebras, $\mathfrak{gl}(X)(R)$ is a finite dimensional algebra over $\mathbf{k}$, from which the statement follows.

\end{remark}

As a consequence of this remark, we have

\begin{proposition} With notation as in the above definition, $\mathcal{O}(GL(X))$ is the quotient of $S[(X \otimes X^{*})^{*}] \otimes S[(X \otimes X^{*})^{*}]$ by the ideal generated by $m^{*}(K)$.

\end{proposition}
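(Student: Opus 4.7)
The plan is to leverage the functor of points, as suggested in the preceding remark. Let $H = S[(X \otimes X^{*})^{*}]^{\otimes 2}$, let $J \subseteq H$ be the ideal generated by $m^{*}(K)$, and set $H_{2} = H/J$. There is a canonical surjection $\sigma: H_{2} \twoheadrightarrow \mathcal{O}(GL(X))$, and the goal is to show $\sigma$ is an isomorphism. By Yoneda applied in the category of commutative ind-algebras in $\Ver_{p}^{\ind}$, this reduces to verifying that for every commutative ind-algebra $A$, every pair $(f, g) \in \End_{A}(A \otimes X)^{\oplus 2}$ with $f \circ g = \mathrm{id}_{A \otimes X}$ automatically satisfies $g \circ f = \mathrm{id}_{A \otimes X}$.

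The key case is when $A$ is of finite length in $\Ver_{p}$. Then $A \otimes X$ has finite length, so $\End_{\Ver_{p}}(A \otimes X)$ is a finite-dimensional $\mathbf{k}$-algebra, and a fortiori so is the subalgebra $\End_{A}(A \otimes X)$. In any finite-dimensional associative $\mathbf{k}$-algebra a one-sided inverse is automatically two-sided, yielding $g \circ f = \mathrm{id}$ in one line. This is precisely the argument sketched in the remark.

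To extend to arbitrary test algebras, I would apply this to the universal pair $(\mathbf{A}, \mathbf{B})$ over $H_{2}$ itself. The injectivity of $\sigma$ amounts to the vanishing of the morphism $\psi : K \to H_{2}$ obtained by descending $c \circ m^{*}$ to the quotient; equivalently, the residual morphism $\mathbf{B}\mathbf{A} - \mathrm{id}$ must vanish in $\End_{H_{2}}(H_{2} \otimes X)$. For every algebra homomorphism $\phi: H_{2} \to R$ with $R$ of finite length, $\phi \circ \psi$ vanishes by the finite-length case applied to the pair specialized through $\phi$. The conclusion then requires that finite-length specializations of $H_{2}$ separate morphisms from finite-length subobjects of $K$ into $H_{2}$. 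This can be arranged using the PBW-style decomposition of Theorem \ref{PBW} (presenting $\mathcal{O}(GL(X))$ as a nilpotent thickening of the underlying classical, finitely generated commutative Hopf algebra, which is separated by its finite-dimensional quotients) together with the nilpotence furnished by Lemma \ref{nilpotence} on the non-unit isotypic component.

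The main obstacle is precisely this last separation step: the finite-dimensional algebra fact and the Yoneda reduction are routine, so the substantive content lies in justifying that finite-length test algebras are enough to detect the vanishing of the universal morphism $\psi$, which is the point that the preceding remark asserts without detail.
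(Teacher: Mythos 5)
Your proof takes essentially the same approach as the paper's, whose proof consists precisely of the preceding remark: reduce to the functor of points, test on finite-length commutative algebras, and use that a one-sided inverse in a finite-dimensional associative $\mathbf{k}$-algebra is automatically two-sided. The separation step you flag as a gap (that finite-length specializations of $H_{2}$ detect the vanishing of the universal element $\mathbf{B}\mathbf{A} - \mathrm{id}$) is exactly the point the paper's remark also leaves unjustified, and your proposed route via Lemma \ref{nilpotence} and reduction to the underlying ordinary finitely generated commutative algebra is the natural way to close it.
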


This functorial description of $GL(X)$ also immediately gives us a description of $GL(X)_{0}$ and $\mathfrak{gl}(X)$. 

\begin{corollary} Suppose $X$ is semisimple in addition to having finite length in $\C$. Let $X = \displaystyle\bigoplus_{i} V_{i} \otimes L_{i}$ be the decomposition of $X$ into simple objects in $\C$, where $V_{i}$ is a vector space that is the multiplicity space of $L_{i}$ in $X$. 

\begin{enumerate}

\item[1.] $GL(X)_{0} = \prod_{i=1}^{p-1} GL(V_{i}).$

\item[2.] $\Lie(GL(X)) = \mathfrak{gl}(X).$

\end{enumerate}

\end{corollary}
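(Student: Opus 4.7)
My plan is to exploit the functor-of-points description from the preceding proposition, $GL(X)(A) = \{A\text{-module automorphisms of } A\otimes X\}$, and apply it to two families of test algebras: ordinary commutative $\mathbf{k}$-algebras for (1), and, for (2), to compute the augmentation ideal $I$ of $\mathcal{O}(GL(X))$ modulo its square, since $\mathfrak{g} \cong (I/I^{2})^{*}$ as the subobject of primitives in the distribution algebra.

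For (1), I restrict the functor of points to commutative algebras $A$ with $A = A_{0}$. Adjunction gives $\Hom_{A}(A\otimes X, A\otimes X) \cong \Hom_{\C}(X, A\otimes X)$, and the decomposition $X = \bigoplus_{i} V_{i}\otimes L_{i}$ together with $\Hom_{\C}(L_{i}, L_{j}) = \delta_{ij}\mathbf{k}$ collapses this to $\bigoplus_{i} A\otimes \End(V_{i})$, with algebra structure given blockwise by matrix multiplication. Invertibility happens componentwise, so the restricted functor is represented by $\prod_{i} GL(V_{i})$, which is therefore $GL(X)_{0}$.

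For (2), I read off $I/I^{2}$ from the presentation in the previous proposition. After the linear change of generators $\mathbf{a} = \mathbf{A} - \mathrm{id}$, $\mathbf{b} = \mathbf{B} - \mathrm{id}$, the defining relation $\mathbf{A}\mathbf{B} = \mathrm{id}$ becomes $\mathbf{a} + \mathbf{b} + \mathbf{a}\mathbf{b} = 0$ as a $\mathfrak{gl}(X)$-valued equation. Modulo $I^{2}$ this collapses to $\mathbf{a} + \mathbf{b} = 0$, which identifies the two copies of $(X\otimes X^{*})^{*}$ in $I/I^{2}$ via the antidiagonal and leaves $I/I^{2} \cong (X\otimes X^{*})^{*}$; dualizing yields $\mathfrak{g} \cong X\otimes X^{*} = \mathfrak{gl}(X)$.

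The main obstacle I anticipate is the categorical bookkeeping, namely tracking the identifications between the various bi-graded pieces of the presentation and between $I/I^{2}$ and $\mathfrak{g}^{*}$; the content is routine but requires some care in the symmetric tensor category setting. A cleaner conceptual alternative uses the dual-numbers characterization $\ker(G(\mathbf{1}\oplus Y)\to G(\mathbf{1})) \cong \Hom_{\C}(Y^{*}, \mathfrak{g})$ applied to the square-zero extensions $\mathbf{1}\oplus Y$: an $(\mathbf{1}\oplus Y)$-linear endomorphism of $(\mathbf{1}\oplus Y)\otimes X$ is a pair $(f_{0}, f_{1})$ with $f_{0}: X \to X$ and $f_{1}: X \to Y\otimes X$, reduces to the identity modulo $Y$ iff $f_{0} = \mathrm{id}_{X}$, and is automatically invertible since $Y\otimes Y$ maps to zero in $\mathbf{1}\oplus Y$; Yoneda in $Y$ then yields $\mathfrak{g} \cong \mathfrak{gl}(X)$ directly.
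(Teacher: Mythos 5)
Your proof of part 1 follows the same route as the paper: both restrict the functor of points $GL(X)(A) = \{A\text{-module automorphisms of } A\otimes X\}$ to algebras with trivial non-unit isotypic part and use the fact that automorphisms of $\overline{A}\otimes X$ must preserve the decomposition into $L_{i}$-isotypic components, yielding $\prod_{i}GL(V_{i})$.

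For part 2, both of your approaches are correct but genuinely different from the paper's. The paper never performs the affine translation $\mathbf{a}=\mathbf{A}-\mathrm{id}$; instead it observes that $I/I^{2}$ is a quotient of $\mathfrak{gl}(X)^{*}\oplus\mathfrak{gl}(X)^{*}$, so $\mathfrak{g}$ embeds into $\mathfrak{gl}(X)\oplus\mathfrak{gl}(X)$, and then exploits the \emph{Hopf} structure: the antipode exchanges the two tensor factors and acts as $-1$ on primitives, forcing $\mathfrak{g}$ onto the anti-diagonal copy of $\mathfrak{gl}(X)$, after which a linear-independence argument in $I/I^{2}$ gives equality. Your approach (a) instead reads the constraint directly off the defining relation $\mathbf{a}+\mathbf{b}+\mathbf{ab}=0$ after translating the identity to the origin; this replaces the antipode computation with an explicit presentation of $J$ mod $I^{2}$ and requires the ``categorical bookkeeping'' you flag (the translation $\mathbf{A}\mapsto\mathbf{A}-\mathrm{id}$ is the algebra automorphism of $S(\mathfrak{gl}(X)^{*}\oplus\mathfrak{gl}(X)^{*})$ shifting each generator by $\coev_{X}^{*}$). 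Your approach (b), using square-zero extensions $\mathbf{1}\oplus Y$ and Yoneda, avoids manipulating presentations altogether and is arguably the cleanest of the three; it never touches the antipode or the ideal-theoretic description. The one thing you should add, to either version of part 2, is a remark on why the resulting isomorphism $\mathfrak{g}\cong\mathfrak{gl}(X)$ is one of \emph{Lie algebras}, not just of objects: the paper closes this gap by noting that the comultiplication on $\mathcal{O}(GL(X))$ is induced from that on $\mathcal{O}(\mathfrak{gl}(X)\times\mathfrak{gl}(X))$, so the bracket is the commutator; in your dual-numbers picture the analogous check is that the group commutator in $\ker\bigl(GL(X)(\mathbf{1}\oplus Y\oplus Y')\to GL(X)(\mathbf{1})\bigr)$ computes the matrix commutator to first order.
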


\begin{proof}

\begin{enumerate}

\item[1.]  To see the first statement, note that
$$GL(X)_{0}(A) = GL(X)_{0}(\overline{A}) = \{\overline{A}-\text{module automorphisms of }\overline{A} \otimes X\}.$$ 
Since $\overline{A}$ is a vector space, it must preserve the isotypic decomposition of $X$. Hence, 

\begin{align*}
GL(X)_{0}(A) &= \prod_{i} \{\overline{A}-\text{module automorphisms of }\overline{A} \otimes V_{i}\}\\
&= \prod_{i} GL(V_{i})(\overline{A})\\
&= \prod_{i} GL(V_{i})(A).
\end{align*}

\item[2.] Let $I$ be the augmentation ideal of $\mathcal{O}(GL_{X})$, viewed as a quotient of 
$$S((X \otimes X^{*})^{*} \oplus (X \otimes X^{*})^{*}).$$ 
Then, $I/I^{2}$ is clearly a quotient of $\mathfrak{gl}(X)^{*} \oplus \mathfrak{gl}(X)^{*}$, with each $\mathfrak{gl}(X)^{*}$ factor being generated by one of the $(X \otimes X^{*})^{*}$ factors. Hence, the Lie algebra $\mathfrak{g}$ is a subspace of $\mathfrak{gl}(X) \oplus \mathfrak{gl}(X)$. Let $\pi_{1}, \pi_{2}$ be the two projections. Since the antipode on $\mathcal{O}(GL_{X})$ sends the first $\mathfrak{gl}(X)^{*}$ onto the second, $\pi_{1} \circ S = \pi_{2}$ on $\mathfrak{g}$. But $S = -1$ on $\mathfrak{g}$ as $\mathfrak{g}$ is primitive. Hence, $\mathfrak{g} \subseteq \mathfrak{gl}(X)$, the anti-diagonal subspace inside $\mathfrak{gl}(X) \oplus \mathfrak{gl}(X)$. However, it is clear that the diagonal portion of $\mathfrak{gl}(X)^{*} \oplus \mathfrak{gl}(X)^{*}$ inside $\mathcal{O}(GL(X))$ is linearly independent in $I/I^{2}$. Hence, $\mathfrak{g} = \mathfrak{gl}(X)$ as an object in $\C$. The fact that the Lie algebra structure agrees follows from the fact that comultiplication in $\mathcal{O}(GL(X))$ is induced from comultiplication on $\mathcal{O}(\mathfrak{gl}(X) \times \mathfrak{gl}(X)).$ 
\end{enumerate}
\end{proof}

\begin{remark} Note that $GL(X)$ injects inside $\mathfrak{gl}(X)$ via the inclusion of the first $S((X \otimes X^{*})^{*})$ factor, viewing $\mathfrak{gl}(X)$ as a scheme in $\C$ with this ring of functions. The fact that this is an injection can be checked at the level of the functor of points, i.e., by checking that it is an injection $GL(X)(A) \rightarrow \mathfrak{gl}(X)(A)$ for every commutative algebra $A$ in $\C$.

\end{remark}

\begin{proposition} \label{special-linear} Let $\mathrm{ev} = \mathrm{ev}_{X^{*}}$ be the evaluation map $\mathfrak{gl}(X) \rightarrow \mathbf{1}$. Then, $\mathrm{ev}$ is a map of Lie algebras, with $\mathbf{1}$ given the trivial bracket. 

\end{proposition}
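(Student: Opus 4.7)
The plan is to recognize that the statement is precisely the categorical cyclicity of the trace on $\mathfrak{gl}(X) = X \otimes X^{*}$. Unwinding definitions, since $\mathbf{1}$ carries the zero bracket (and its $p$-operation is automatic), $\mathrm{ev}$ is a Lie algebra map if and only if the composition $\mathrm{ev} \circ [\cdot,\cdot]$ vanishes as a morphism $\wedge^{2}\mathfrak{gl}(X) \to \mathbf{1}$. The bracket on $\mathfrak{gl}(X)$ is the commutator $[\cdot,\cdot] = m - m \circ c_{\mathfrak{gl}(X),\mathfrak{gl}(X)}$, where $m: \mathfrak{gl}(X) \otimes \mathfrak{gl}(X) \to \mathfrak{gl}(X)$ is the associative composition of endomorphisms. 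So the entire claim reduces to showing the equality
\begin{equation*}
\mathrm{ev} \circ m = \mathrm{ev} \circ m \circ c_{\mathfrak{gl}(X),\mathfrak{gl}(X)}
\end{equation*}
of morphisms $\mathfrak{gl}(X) \otimes \mathfrak{gl}(X) \to \mathbf{1}$, i.e., the categorical version of $\tr(fg) = \tr(gf)$.

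I would next unravel both sides as string diagrams. The composition $m$ factors categorically as the contraction of the middle $X^{*} \otimes X$ via $\mathrm{ev}_{X}$, so $\mathrm{ev} \circ m$ is the morphism
\begin{equation*}
X \otimes X^{*} \otimes X \otimes X^{*} \xrightarrow{\mathrm{id}_{X} \otimes \mathrm{ev}_{X} \otimes \mathrm{id}_{X^{*}}} X \otimes X^{*} \xrightarrow{\mathrm{ev}_{X^{*}}} \mathbf{1},
\end{equation*}
which pairs the tensor slots $2 \leftrightarrow 3$ and $1 \leftrightarrow 4$. After precomposing with $c_{\mathfrak{gl}(X),\mathfrak{gl}(X)}$, which swaps the two $\mathfrak{gl}(X)$ factors, the subsequent contractions instead pair slots $4 \leftrightarrow 1$ and $3 \leftrightarrow 2$, with extra braidings picked up from moving slots past one another. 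Using naturality of $c$, the symmetric hexagon axiom, and the identity $\mathrm{ev}_{X^{*}} = \mathrm{ev}_{X} \circ c_{X,X^{*}}$ (together with its partner for $\mathrm{ev}_{X}$), both composites straighten to the same diagram, which is exactly the canonical nondegenerate pairing on $\mathrm{End}(X) = X \otimes X^{*}$ given by $(f,g) \mapsto \tr(fg)$.

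Once the equality $\mathrm{ev} \circ m = \mathrm{ev} \circ m \circ c$ is established, composition with the natural projection $\mathfrak{gl}(X)\otimes\mathfrak{gl}(X) \twoheadrightarrow \wedge^{2}\mathfrak{gl}(X)$ shows that $\mathrm{ev}\circ [\cdot,\cdot]$ vanishes, so $\mathrm{ev}$ intertwines the brackets. The main step is purely diagrammatic, and I expect no genuine obstacle beyond careful bookkeeping of tensor slots and applications of the braiding; conceptually this is simply the standard fact that the categorical trace pairing on $X \otimes X^{*}$ is symmetric in any symmetric rigid tensor category.
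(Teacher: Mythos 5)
Your proposal is correct and matches the paper's approach. The paper also reduces the claim to the identity $\mathrm{ev}\circ m = \mathrm{ev}\circ m\circ c_{\mathfrak{gl}(X),\mathfrak{gl}(X)}$ and verifies it by observing that both composites pair the first tensor slot with the fourth and the second with the third via the same evaluations, which is exactly your cyclicity-of-trace argument; the only difference is that you flag the standard bookkeeping steps (naturality of $c$, the hexagon, $\mathrm{ev}_{X^*}=\mathrm{ev}_X\circ c_{X,X^*}$) that the paper leaves implicit.
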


\begin{proof} Let $m$ be the multiplication on $X \otimes X^{*}$. Then, 

$$\mathrm{ev} \circ m: X \otimes X^{*} \otimes X \otimes X^{*} \rightarrow \mathbf{1}$$
pairs the first component with the fourth via $\mathrm{ev}_{X^{*}}$ and the second with the third via $\mathrm{ev}_{X}$. After applying $c_{X \otimes X^{*}}$ to swap the factors, the first and fourth components are paired via $\mathrm{ev}_{X^{*}}^{*}$ and the second and third via $\mathrm{ev}_{X}^{*}$. This is the same.

\end{proof}

\begin{definition} Let $\sl(X)$ be the kernel of this homomorphism.

\end{definition}

\begin{definition} Define $\mathrm{sc}(\mathfrak{gl}(X))$ as the copy of $\mathbf{1}$ that is the image of $\coev_{X}$.

\end{definition}

\begin{remark} The sc here stands for scalars. \end{remark}

\begin{proposition} \label{scalars} $\mathrm{sc}(\mathfrak{gl}(X))$ is a central Lie subalgebra of $\mathfrak{gl}(X)$, and, whenever the categorical dimension of $X$ is nonzero, $\mathfrak{gl}(X) = \mathrm{sc}(\mathfrak{gl}(X)) \oplus \sl(X).$ 

\end{proposition}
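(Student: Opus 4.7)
My plan is to handle the two assertions separately, treating centrality as a computation with string/coevaluation diagrams and the direct sum decomposition as a splitting furnished by the evaluation map.

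For the centrality claim, recall that the multiplication $m$ on $\mathfrak{gl}(X) = X \otimes X^{*}$ is the usual one built from $\mathrm{id}_{X} \otimes \mathrm{ev}_{X} \otimes \mathrm{id}_{X^{*}}$, and that $\coev_{X}: \mathbf{1} \to X \otimes X^{*}$ is the two-sided unit for this multiplication: the standard rigidity identities give $m \circ (\coev_{X} \otimes \mathrm{id}_{\mathfrak{gl}(X)}) = \mathrm{id}_{\mathfrak{gl}(X)} = m \circ (\mathrm{id}_{\mathfrak{gl}(X)} \otimes \coev_{X})$. To show $\mathrm{sc}(\mathfrak{gl}(X))$ lies in the center it suffices to check that the Lie bracket $[\cdot,\cdot] = m - m \circ c$ vanishes after precomposition with $\coev_{X} \otimes \mathrm{id}_{\mathfrak{gl}(X)}$. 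The first summand gives $\mathrm{id}_{\mathfrak{gl}(X)}$ by the unit axiom. For the second summand I use the naturality of the braiding: $c_{\mathfrak{gl}(X), \mathfrak{gl}(X)} \circ (\coev_{X} \otimes \mathrm{id}_{\mathfrak{gl}(X)}) = (\mathrm{id}_{\mathfrak{gl}(X)} \otimes \coev_{X}) \circ c_{\mathbf{1}, \mathfrak{gl}(X)}$, and $c_{\mathbf{1}, \mathfrak{gl}(X)}$ is the unit constraint, so composing with $m$ again produces $\mathrm{id}_{\mathfrak{gl}(X)}$. The two summands cancel.

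For the direct sum decomposition, the key observation is that $\mathrm{ev} \circ \coev_{X}: \mathbf{1} \to \mathbf{1}$ equals $\dim(X)$ as a scalar. This is immediate from the trace definition given earlier in the paper together with the pivotal identification used in defining $\mathrm{ev} = \mathrm{ev}_{X^{*}}$: namely $\mathrm{ev}_{X^{*}} = \mathrm{ev}_{X} \circ c_{X, X^{*}}$, so $\mathrm{ev}_{X^{*}} \circ \coev_{X} = \mathrm{ev}_{X} \circ c_{X, X^{*}} \circ \coev_{X} = \tr(\mathrm{id}_{X}) = \dim(X)$. Now, whenever $\dim(X) \neq 0$, the composite $\mathbf{1} \xrightarrow{\coev_{X}} \mathfrak{gl}(X) \xrightarrow{\mathrm{ev}} \mathbf{1}$ is an isomorphism, so the map $\dim(X)^{-1}\,\mathrm{ev}: \mathfrak{gl}(X) \to \mathbf{1}$ is a retraction of $\coev_{X}$. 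This produces an internal direct sum decomposition $\mathfrak{gl}(X) = \mathrm{sc}(\mathfrak{gl}(X)) \oplus \ker(\mathrm{ev}) = \mathrm{sc}(\mathfrak{gl}(X)) \oplus \sl(X)$ as objects of $\mathcal{C}$, which is automatically a decomposition of Lie algebras since $\sl(X)$ is a Lie ideal (as the kernel of the Lie algebra homomorphism $\mathrm{ev}$, by Proposition \ref{special-linear}) and $\mathrm{sc}(\mathfrak{gl}(X))$ is central by the first part.

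I do not anticipate a real obstacle here: both parts reduce to short manipulations of the rigidity/symmetry axioms, and the only subtlety is bookkeeping the identification $X^{**} \cong X$ so that $\mathrm{ev}_{X^{*}} \circ \coev_{X}$ really computes $\dim(X)$ and not some unrelated trace. Once that identification is made explicit, the argument is purely formal in any symmetric rigid tensor category.
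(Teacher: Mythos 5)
Your proof is correct and follows essentially the same route as the paper: centrality comes from the rigidity identities showing that left and right multiplication by $\coev_{X}$ are both the identity on $\mathfrak{gl}(X)$ (you simply make explicit the naturality-of-braiding step the paper leaves implicit), and the splitting comes from observing that $\mathrm{ev}\circ\coev_{X}=\dim(X)$ is invertible in $\mathbf{k}$ when $\dim(X)\neq 0$, so $\coev_{X}$ admits a retraction and $\mathfrak{gl}(X)$ decomposes as $\mathrm{sc}(\mathfrak{gl}(X))\oplus\sl(X)$.
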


\begin{proof} Left multiplication by the image of $\mathbf{1}$ under $\coev_{X}$ is the identity because the composite

$$\begin{tikzpicture}
\matrix (m) [matrix of math nodes,row sep=8em,column sep=8em,minimum width=8em]
{ X \otimes X^{*} & X \otimes X^{*} \otimes X \otimes X^{*} & X \otimes X^{*}\\};;
\path[-stealth]
(m-1-1) edge node[auto] {$\coev_{X} \otimes \mathrm{id}_{X \otimes X^{*}}$} (m-1-2)
(m-1-2) edge node[auto] {$\mathrm{id}_{X} \otimes \mathrm{ev}_{X} \otimes \mathrm{id}_{X^{*}}$} (m-1-3);
\end{tikzpicture}$$
is the identity via the rigidity axioms. The same goes for right multiplication. This proves that $\mathrm{sc}(\mathfrak{gl}(X))$ is central. The second statement follows from the fact that whenever the categorical dimension of $X$ is nonzero, coevaluation followed by evaluation is a unit in $\mathbf{k}$ and hence the evaluation map splits. 

\end{proof}

We can also construct a tautological representation of $GL(X)$ on $X$.

\begin{definition} The \emph{tautological representation} of $GL(X)$ in $\C$ is $X$ as an object, equipped with the coaction $\rho: X \rightarrow X \otimes \mathcal{O}(GL(X))$
induced by the inclusion of $X \otimes X^{*} = (X \otimes X^{*})^{*}$ as the first $(X \otimes X^{*})^{*}$ factor inside $\mathcal{O}(GL(X))$.

\end{definition} 

The following proposition follows immediately from the definition.

\begin{proposition} The induced action of $\mathfrak{gl}(X)$ on $X$ is $\mathrm{id}_{X} \otimes \mathrm{ev}_{X^{*}} : X \otimes X^{*} \otimes X \rightarrow X.$
The induced action of $GL(X)_{0}$ on $X$ is the product of the tautological actions on each multiplicity space.

\end{proposition}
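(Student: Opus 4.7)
The plan is to unwind the definition of the tautological coaction and then feed it into the standard dictionary between Hopf algebra coactions and Lie algebra actions. By construction, the coaction $\rho: X \to X \otimes \mathcal{O}(GL(X))$ is the composite
\begin{equation*}
X \xrightarrow{\coev_X \otimes \mathrm{id}_X} X \otimes X^* \otimes X \xrightarrow{\mathrm{id}_X \otimes \iota} X \otimes \mathcal{O}(GL(X)),
\end{equation*}
where $\iota$ identifies $X^* \otimes X \cong (X \otimes X^*)^*$ via rigidity with the degree-one part of the first $S((X \otimes X^*)^*)$ factor of $\mathcal{O}(GL(X))$; in particular $\rho$ has no higher-order content.

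To compute the induced Lie algebra action, I would then invoke the general rule that a primitive $\lambda \in \mathfrak{g} \subset \mathcal{O}(G)^{\circ}_{1}$ acts on $X$ via $(\mathrm{id}_X \otimes \lambda) \circ \rho$. Since primitives annihilate both the unit and $I^2$ for $I$ the augmentation ideal, only the linear part of $\rho$ contributes, and the relevant pairing reduces to the canonical evaluation $\mathfrak{gl}(X) \otimes \mathfrak{gl}(X)^* \to \mathbf{1}$ between $X \otimes X^*$ and $(X \otimes X^*)^*$. Substituting the explicit form of $\rho$ and applying the zigzag identity $(\mathrm{ev} \otimes \mathrm{id}_X) \circ (\mathrm{id}_X \otimes \coev_X) = \mathrm{id}_X$ collapses the composite $\mathfrak{gl}(X) \otimes X \to X$ to $\mathrm{id}_X \otimes \mathrm{ev}_{X^*}$, as claimed. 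This triangle-identity cancellation, together with the bookkeeping of rigidity identifications, is essentially the entire content of the first part.

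For the second statement, I would use the functor-of-points description from part (1) of the preceding corollary, $GL(X)_0(A) = \{\overline{A}\text{-module automorphisms of } \overline{A} \otimes X\}$. Because $\overline{A}$ lies in the trivial isotypic component of $\Ver_p^{\ind}$, tensoring with $X = \bigoplus_i V_i \otimes L_i$ preserves the isotypic decomposition, so every such automorphism splits as a product of $\overline{A}$-linear automorphisms of $\overline{A} \otimes V_i$ acting by the identity on $L_i$. Pulling the coaction from the first part back along $GL(V_i) \hookrightarrow GL(X)_0 \hookrightarrow GL(X)$ then displays the restriction as the standard tautological representation of $GL(V_i)$ on $V_i$, tensored with the trivial action on $L_i$. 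The main obstacle anywhere in the proof is the duality bookkeeping in the first paragraph; after that, everything is formal.
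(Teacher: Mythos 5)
The paper offers no proof here beyond ``follows immediately from the definition,'' and your unwinding supplies exactly the elided argument: the tautological coaction is concentrated in degree one of the first symmetric-algebra factor, a primitive distribution kills $\mathbf{1}$ and $I^2$ and so pairs only against that linear part, and the triangle identity collapses the resulting composite $\mathfrak{gl}(X)\otimes X\to X$ to evaluation, while the $GL(X)_0$ claim drops straight out of the functor-of-points description since $\overline{A}$-linear endomorphisms of $\overline{A}\otimes X$ necessarily respect the isotypic decomposition. Your argument is correct; the only slip is cosmetic — your displayed zigzag $(\mathrm{ev}\otimes\mathrm{id}_X)\circ(\mathrm{id}_X\otimes\coev_X)$ has the tensor factors in the wrong order to be an endomorphism of $X$, and you mean $(\mathrm{id}_X\otimes\mathrm{ev}_X)\circ(\coev_X\otimes\mathrm{id}_X)=\mathrm{id}_X$.
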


\begin{theorem} $X$ is a simple representation of $GL(X)$.

\end{theorem}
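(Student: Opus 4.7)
The plan is to reduce the claim to showing that $X$ is simple as a module over the Lie algebra $\mathfrak{gl}(X) = X \otimes X^{*}$. Any coaction-invariant subobject of $X$ is automatically stable under the induced action of the distribution algebra $\mathcal{O}(GL(X))^{\circ}_{1}$, and hence under its subobject of primitives, which is $\mathfrak{gl}(X)$ by the construction of the Lie algebra in the preceding section. So it suffices to prove that $X$ has no proper nonzero $\mathfrak{gl}(X)$-stable subobject.

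Let $\iota: Y \hookrightarrow X$ be such a stable subobject and let $q: X \twoheadrightarrow Z := X/Y$ be the quotient. Using the formula $\mathrm{id}_{X} \otimes \mathrm{ev}_{X^{*}}$ for the Lie algebra action established in the previous proposition, stability of $Y$ is equivalent to the vanishing of the composite
\begin{equation*}
X \otimes X^{*} \otimes Y \xrightarrow{\mathrm{id}_{X \otimes X^{*}} \otimes \iota} X \otimes X^{*} \otimes X \xrightarrow{\mathrm{id}_{X} \otimes \mathrm{ev}_{X^{*}}} X \xrightarrow{q} Z.
\end{equation*}
Setting $\beta := \mathrm{ev}_{X^{*}} \circ (\mathrm{id}_{X^{*}} \otimes \iota) : X^{*} \otimes Y \to \mathbf{1}$, the composite factors as the tensor product $q \otimes \beta : X \otimes X^{*} \otimes Y \to Z \otimes \mathbf{1} \cong Z$. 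Under the rigidity adjunction $\Hom(X^{*} \otimes Y, \mathbf{1}) \cong \Hom(Y, X)$, the morphism $\beta$ corresponds to $\iota$ itself, so $\beta = 0$ if and only if $Y = 0$.

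To finish, I would invoke the simplicity of $\mathbf{1}$ and biexactness of $\otimes$: any nonzero morphism $\beta : X^{*} \otimes Y \to \mathbf{1}$ is surjective, so $\mathrm{id}_{X} \otimes \beta: X \otimes X^{*} \otimes Y \twoheadrightarrow X$ is surjective, and hence $q \otimes \beta = q \circ (\mathrm{id}_{X} \otimes \beta)$ vanishes only when $q = 0$. Therefore the stability condition forces $Y = 0$ or $Y = X$, establishing simplicity.

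The only nontrivial verification in this plan is the identification of $\beta$ with $\iota$ under the rigidity adjunction, which is a direct application of the triangle identities for evaluation and coevaluation. I do not anticipate any other real obstacle: everything else is a formal consequence of the rigid symmetric monoidal structure of $\C$ and the simplicity of $\mathbf{1}$.
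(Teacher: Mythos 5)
Your proof is correct, and it does essentially the same thing as the paper at the crucial step: both arguments boil down to the fact that the restriction of evaluation $\mathrm{ev}_{X}|_{X^{*} \otimes Y}$, for a nonzero subobject $Y\hookrightarrow X$, is nonzero because under the rigidity adjunction it corresponds to the inclusion $\iota$ itself. The presentation differs in one structural respect. The paper picks a complement $X''$ of the submodule $X'$ in $X$ (so it implicitly uses semisimplicity of the ambient category), observes that the image of $X''\otimes X^{*}\otimes X'$ under the action lies in $X''\cap X'=0$, then applies faithfulness of $X''\otimes -$ to deduce $\mathrm{ev}_{X}|_{X^{*}\otimes X'}=0$ and concludes via nondegeneracy. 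You instead pass to the quotient $Z=X/Y$, factor the stability condition as $q\otimes\beta=0$, and use simplicity of $\mathbf{1}$ together with exactness of $\otimes$ to force $q=0$ when $\beta\not=0$. The two routes are essentially dual (complement vs.\ quotient); yours has the minor advantage of not requiring semisimplicity and so works verbatim in any symmetric tensor category $\C$ (which is the generality in which the section is stated), while the paper's argument is tailored to the semisimple case $\C=\Ver_{p}$. Both are formal consequences of rigidity, so neither buys much over the other in this setting.
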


\begin{proof} It suffices to check that $X$ is a simple $\mathfrak{gl}(X)$ representation. Let $X'$ be a submodule of $X$ and let $X''$ be a complement of $X'$ in $X$ as objects in $\Ver_{p}$. Since $X'$ is a submodule, we have

$$(\mathrm{id}_{X} \otimes \mathrm{ev}_{X})(X \otimes X^{*} \otimes X') \subseteq X'.$$
But this means that 

$$(\mathrm{id}_{X} \otimes \mathrm{ev}_{X})(X'' \otimes X^{*} \otimes X') = 0$$
as this image is obviously a subobject of both $X''$ and $X'$. The only way this is possible is if either $X'' = 0$ or $\mathrm{ev}_{X}|_{X^{*} \otimes X'} = 0$, which, by non-degeneracy of the evaluation pairing, forces $X'$ to be $0$. Hence, either $X' = X$ or $X' = 0$.

\end{proof}

Finally, we also have the universality of the tautological representation.

\begin{proposition} If $G$ is an affine group scheme of finite type in $\C$, then a representation of $G$ on $X$ is the same as a group homomorphism $G \rightarrow GL(X)$. If $\mathfrak{g}$ is a Lie algebra in $\C$, a representation of $\mathfrak{g}$ on $X$ is the same as a Lie algebra homomorphism $\mathfrak{g} \rightarrow \mathfrak{gl}(X)$.

\end{proposition}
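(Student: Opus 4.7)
The plan is to use the functor-of-points description of $GL(X)$ established earlier, together with Yoneda's lemma, to convert coactions of $\mathcal{O}(G)$ on $X$ into natural transformations $G(-) \to GL(X)(-)$ of group-valued functors.

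\textbf{Main direction.} Start with a representation of $G$ on $X$, i.e.\ a coaction $\rho : X \to X \otimes \mathcal{O}(G)$ satisfying coassociativity and counitality. For any commutative ind-algebra $A$ in $\C$ and any $A$-point $g : \mathcal{O}(G) \to A$ of $G$, I would define
$$
\phi_{g} \;:\; A \otimes X \xrightarrow{\mathrm{id}_{A} \otimes \rho} A \otimes X \otimes \mathcal{O}(G) \xrightarrow{\mathrm{id}_{A} \otimes \mathrm{id}_{X} \otimes g} A \otimes X \otimes A \xrightarrow{\sim} A \otimes X,
$$
where the last map multiplies the two copies of $A$. This is manifestly $A$-linear. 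Coassociativity of $\rho$ translates into the identity $\phi_{gh} = \phi_{g} \circ \phi_{h}$ for the convolution product on $G(A)$, while counitality gives $\phi_{e} = \mathrm{id}$. In particular, $\phi_{g^{-1}}$ is a two-sided inverse of $\phi_{g}$, so $\phi_{g} \in GL(X)(A)$ under the identification from the earlier proposition. Naturality in $A$ is automatic, so by Yoneda we obtain a homomorphism of group schemes $G \to GL(X)$.

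\textbf{Converse direction.} Given a group scheme homomorphism $f : G \to GL(X)$, equivalently a Hopf algebra map $f^{*} : \mathcal{O}(GL(X)) \to \mathcal{O}(G)$, I would pull back the tautological coaction $\rho_{\mathrm{taut}} : X \to X \otimes \mathcal{O}(GL(X))$ to obtain $(\mathrm{id}_{X} \otimes f^{*}) \circ \rho_{\mathrm{taut}} : X \to X \otimes \mathcal{O}(G)$; this is a coaction because $f^{*}$ is a coalgebra map. A short diagram chase shows that these two assignments are mutually inverse: pulling back the tautological coaction along the map induced by $\rho$ recovers $\rho$, and vice versa.

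\textbf{Lie algebra version.} For the infinitesimal statement, one can either repeat the above argument with $A$ ranging over commutative ind-algebras with $A_{+}^{2} = 0$ (so that $G(A)$ becomes $\mathfrak{g}(A)$ and $GL(X)(A)$ becomes $\mathrm{End}(A \otimes X) = \mathfrak{gl}(X)(A)$), or one can differentiate the group-scheme statement at the identity, using that the functors $G \mapsto \mathfrak{g}$ and $GL(X) \mapsto \mathfrak{gl}(X)$ commute with taking Lie algebras. Either way, one must then check compatibility of brackets, which reduces to the fact that the induced action of $\mathfrak{gl}(X)$ on $X$ computed in the preceding proposition is simply the evaluation, so commutators on the $\mathfrak{gl}(X)$ side act as commutators of endomorphisms of $X$.

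\textbf{Expected obstacle.} The only subtle point is verifying that the map $\phi_{g}$ constructed from a coaction is genuinely invertible, not merely an $A$-linear endomorphism; this is what requires the \emph{group} structure on $G$ (via the antipode of $\mathcal{O}(G)$) rather than just a bialgebra structure. Everything else is a formal manipulation with the rigidity and coaction axioms, facilitated by the functor-of-points description of $GL(X)$ already in hand.
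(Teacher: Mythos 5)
Your proposal is correct, and it takes a genuinely different route from the paper's. The paper's proof works directly at the level of morphisms in $\C$: given an action $a:\mathfrak{g}\otimes X\to X$, it explicitly constructs the Lie algebra homomorphism $\rho_a : \mathfrak{g}\to\mathfrak{gl}(X)$ as $(a\otimes\mathrm{id}_{X^*})\circ(\mathrm{id}_{\mathfrak{g}}\otimes\coev_X)$, checks via the rigidity axioms (as in Proposition \ref{scalars}) that pulling back the tautological action recovers $a$, and then states that the group case follows by an analogous construction. You instead work on the functor-of-points side, defining for each $A$-point $g\in G(A)$ an $A$-linear map $\phi_g$ on $A\otimes X$ via the coaction, using the antipode to see that $\phi_g$ is invertible, and invoking Yoneda to descend the natural transformation $G(-)\to GL(X)(-)$ to a morphism of group schemes; you then obtain the Lie algebra statement either by restricting to square-zero $A$ or by differentiating. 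Both are standard and valid; the paper's approach yields an explicit formula for the homomorphism (and for the Lie algebra case avoids functor-of-points entirely), whereas yours leverages the earlier functor-of-points description of $GL(X)$ to make the group case the primary one and the Lie algebra case a corollary. One small point to watch in your main direction: depending on whether $\rho$ is taken as a right or left coaction and which convolution convention you use on $G(A)$, the identity you want may be $\phi_{gh}=\phi_h\circ\phi_g$ rather than $\phi_{gh}=\phi_g\circ\phi_h$; this does not affect the conclusion (you can always post-compose with the antipode, i.e.\ inversion on $GL(X)$), but it is worth stating the conventions so the check of the group-homomorphism property is unambiguous.
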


\begin{proof} Clearly, any such homomorphism induces a representation from the tautological representation. We need to prove that given a group or Lie algebra representation on $X$, we can construct a homomorphism that pulls the tautological representation back to the given one.

Let us prove the statement for Lie algebras first. Let $\mathfrak{g}$ be a Lie algebra in $\C$ acting on $X$, with action $a: \mathfrak{g} \otimes X \rightarrow X.$
The homomorphism $\rho_{a}$ to $\mathfrak{gl}(X)$ is constructed as follows. 

$$\rho_{a}: \mathfrak{g} \rightarrow \mathfrak{g} \otimes X \otimes X^{*} \rightarrow X \otimes X^{*}$$
where the first map is $\mathrm{id}_{\mathfrak{g}} \otimes \coev_{X}$ and the second is $a \otimes \mathrm{id}_{X^{*}}$. This is a Lie algebra homomorphism because $\coev_{X}$ is a Lie algebra homomorphism, $a$ is a Lie action map and the bracket in $\mathfrak{gl}(X)$ is just the commutator. Additionally, if we pull back the tautological representation, we get the map

$$\mathfrak{g} \otimes X \rightarrow \mathfrak{g} \otimes X \otimes X^{*} \otimes X \rightarrow X$$
where the first map is $\mathrm{id}_{\mathfrak{g}} \otimes \coev_{X} \otimes \mathrm{id}_{X}$ and the second map is $a \otimes \mathrm{ev}_{X}$. The proof of Proposition \ref{scalars} tells us that this composite is just $a$. 

The proof of the proposition for groups follows in a similar manner to the proof for Lie algebras.

\end{proof}

To end this section, we want to give an explicit decomposition of $GL(X)$ as a product of schemes $GL(X)_{0} \times \mathfrak{gl}(X)_{\not=0}$ for $X \in \Ver_{p}$. To do so we are going to use the inclusion of $GL(X)(A)$ into $\mathfrak{gl}(X)(A)$ for every commutative algebra $A$. Our strategy will be to prove a product decomposition 

$$GL(X)(A) = GL(X)_{0}(A) \times \mathfrak{gl}_{\not=0}(A)$$
that is natural in $A$. 

Consider the decomposition $\mathfrak{gl}(X) = \bigoplus_{i} \mathfrak{gl}(X)_{L_{i}}$ where $\mathfrak{gl}(X)_{L_{i}}$ is the $L_{i}$-isotypic component of $\mathfrak{gl}(X)$. Note that $\mathfrak{gl}(X)_{L_{1}} = \mathfrak{gl}_{0}$. We think of these objects in $\Ver_{p}$ as schemes using the following definition.

\begin{definition} The scheme associated to an object $Y$ in $\Ver_{p}$ is the affine scheme with functions given by $S(Y^{*}).$

\end{definition}

Note that the functor of points $\mathfrak{gl}(X)(A) = \Hom(X, A \otimes X)$, which is the same as the set of $A$-module endomorphisms of $A \otimes X$.

\begin{lemma} \label{product} As a scheme in $\Ver_{p}$, $\mathfrak{gl}(X) = \prod_{i=1}^{p-1} \mathfrak{gl}(X)_{L_{i}}$. For any commutative algebra $A$ in $\Ver_{p}^{\ind}$, the projection maps

$$\pi_{i}: \mathfrak{gl}(X)(A) \rightarrow \mathfrak{gl}(X)_{L_{i}}(A)$$
are obtained by composing a morphism $X \rightarrow A \otimes X$ with the projection $A \otimes X \rightarrow A_{L_{i}} \otimes X$ and the inclusion maps

$$i_{i} : \mathfrak{gl}(X)_{L_{i}}(A) \rightarrow \mathfrak{gl}(X)(A)$$
are obtained by composing morphisms $X \rightarrow A_{L_{i}} \otimes X$ with the inclusion of $A_{L_{i}} \otimes X \rightarrow A \otimes X.$

\end{lemma}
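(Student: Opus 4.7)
\medskip

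\noindent\textbf{Proof proposal.} My plan is to work primarily at the level of the functor of points, using the identification $\mathfrak{gl}(X)(A) = \Hom(X, A \otimes X)$ already noted in the text, and then translate to the ring of functions side to recognize this as a product of schemes.

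The first step is to decompose the functor of points. Since $A = \bigoplus_{i=1}^{p-1} A_{L_i}$ in $\Ver_p^{\ind}$, any morphism $f : X \to A \otimes X$ splits uniquely as $f = \sum_i (\mathrm{proj}_{A_{L_i}} \otimes \mathrm{id}_X) \circ f$, giving a natural set-theoretic decomposition
$$\Hom(X, A \otimes X) = \bigoplus_{i=1}^{p-1} \Hom(X, A_{L_i} \otimes X).$$
The second step is to recognize each summand as $\mathfrak{gl}(X)_{L_i}(A)$. By definition $\mathfrak{gl}(X)_{L_i}(A) = \Hom(\mathfrak{gl}(X)_{L_i}^*, A)$, and since $L_i^* \cong L_i$ in $\Ver_p$ (as the dual of a Jordan block of eigenvalue $1$ is again such a Jordan block), the object $\mathfrak{gl}(X)_{L_i}^*$ is $L_i$-isotypic, so every morphism out of it factors through $A_{L_i}$. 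Using the rigidity adjunction $\Hom(X \otimes X^*, A_{L_i}) = \Hom(X, A_{L_i} \otimes X)$ together with the decomposition $X \otimes X^* = \bigoplus_j \mathfrak{gl}(X)_{L_j}$ (so only the $j=i$ summand contributes when mapping to $A_{L_i}$), one gets the identification $\mathfrak{gl}(X)_{L_i}(A) = \Hom(X, A_{L_i} \otimes X)$.

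The third step is to promote the additive decomposition on the functor of points into a product of schemes. On the ring of functions side, $\mathfrak{gl}(X)^* = \bigoplus_i \mathfrak{gl}(X)_{L_i}^*$, and the universal property of symmetric algebras gives
$$S(\mathfrak{gl}(X)^*) \cong \bigotimes_{i=1}^{p-1} S(\mathfrak{gl}(X)_{L_i}^*),$$
as commutative algebras in $\Ver_p^{\ind}$. Since $\Hom_{\mathrm{alg}}(B \otimes C, A) = \Hom_{\mathrm{alg}}(B, A) \times \Hom_{\mathrm{alg}}(C, A)$, this is exactly the statement that $\mathfrak{gl}(X) = \prod_i \mathfrak{gl}(X)_{L_i}$ as schemes in $\Ver_p$, with the product decomposition on the functor of points agreeing with the additive one found above. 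Tracking the two algebra maps inducing $\pi_i$ and $i_i$ — namely the inclusion $S(\mathfrak{gl}(X)_{L_i}^*) \hookrightarrow \bigotimes_j S(\mathfrak{gl}(X)_{L_j}^*)$ as a tensor factor, and the projection onto this factor — through the rigidity adjunction shows that $\pi_i$ is post-composition with $A \otimes X \twoheadrightarrow A_{L_i} \otimes X$ and $i_i$ is post-composition with $A_{L_i} \otimes X \hookrightarrow A \otimes X$, as claimed.

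The main obstacle is not any deep categorical fact but the careful bookkeeping needed to see that the isotypic decomposition of $\mathfrak{gl}(X)$ under duality matches the isotypic decomposition of $A$ under the rigidity adjunction; this relies on the self-duality $L_i^* \cong L_i$ in $\Ver_p$ and on the observation that morphisms between isotypic objects of different simple types vanish.
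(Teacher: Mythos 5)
Your proof is correct and takes essentially the same approach as the paper's: a functor-of-points argument built on the isotypic decomposition of $A$, with naturality in $A$ upgrading the set-level decomposition to one of schemes. The paper's version is terser — it simply defines $\pi_i$ and $i_i$, verifies $\pi_i \circ i_j = \delta_{ij}\,\mathrm{id}$ and $\sum_i i_i \circ \pi_i = \mathrm{id}$, and cites naturality — whereas you additionally spell out the algebra isomorphism $S(\mathfrak{gl}(X)^*) \cong \bigotimes_i S(\mathfrak{gl}(X)_{L_i}^*)$ and track the maps through the rigidity adjunction, which is harmless extra detail (modulo a small notational slip where you wrote $\Hom(X\otimes X^*, A_{L_i})$ where the source should be the dual $(X\otimes X^*)^*$).
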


\begin{proof} Fix a commutative algebra $A$ in $\Ver_{p}^{\ind}$. Define the projection and inclusion maps as above
It is clear from definition that we have $\pi_{i}\circ i_{j} = \delta_{ij} \mathrm{id}$ and that $\sum_{i} i_{j} \circ \pi_{i} = \mathrm{id}$. This gives us a product decomposition for $\mathfrak{gl}(X)(A)$ and this decomposition is natural in $A$ as projection to isotypic component is natural.

\end{proof} 

\begin{theorem} \label{GL-smooth} The projection maps $\pi_{i}$ exhibit a decomposition of $GL(X)$ as $GL(X)_{0} \times \mathfrak{gl}(X)_{\not=0}$. 

\end{theorem}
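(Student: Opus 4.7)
The plan is to verify the isomorphism at the level of functors of points, combining the isotypic decomposition of $\mathfrak{gl}(X)$ from Lemma \ref{product} with the PBW theorem (Theorem \ref{PBW}). Fix a commutative algebra $A$ in $\Ver_p^{\ind}$. Lemma \ref{product} identifies $\mathfrak{gl}(X)(A)$ with $\mathfrak{gl}(X)_0(A) \times \mathfrak{gl}(X)_{\neq 0}(A)$ via $f \mapsto (\pi_1(f), \pi_{\neq 0}(f))$, where $\pi_i(f)$ is $f$ post-composed with the projection $A \otimes X \to A_{L_i} \otimes X$. Via the inclusion $GL(X) \hookrightarrow \mathfrak{gl}(X)$ from the remark preceding Proposition \ref{special-linear}, $GL(X)(A)$ becomes a subset of this product, and the task is to show it equals $GL(X)_0(A) \times \mathfrak{gl}(X)_{\neq 0}(A)$, naturally in $A$.

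For the forward inclusion, suppose $f \in GL(X)(A)$ is an $A$-module automorphism of $A \otimes X$. Reducing modulo the ideal of $A$ generated by $A_{\neq 0}$ (equivalently, applying $- \otimes_A \overline{A}$) yields an $\overline{A}$-module automorphism of $\overline{A} \otimes X$. Since $\pi_{\neq 0}(f)$ factors through $A_{\neq 0} \otimes X$, it vanishes after this reduction, so the reduced automorphism equals $\overline{\pi_1(f)}$. By the Corollary in Section 3, this shows $\pi_1(f) \in GL(X)_0(\overline{A}) = GL(X)_0(A)$, while $\pi_{\neq 0}(f) \in \mathfrak{gl}(X)_{\neq 0}(A)$ is automatic.

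For the reverse inclusion I would invoke the PBW theorem (Theorem \ref{PBW}) applied to $G = GL(X)$. Since $\Lie(GL(X)) = \mathfrak{gl}(X)$, whose non-unit isotypic part is $\mathfrak{gl}(X)_{\neq 0}$, PBW gives
$$\mathcal{O}(GL(X)) \;\cong\; \mathcal{O}(GL(X)_0) \otimes S(\mathfrak{gl}(X)_{\neq 0}^*)$$
as left $\mathcal{O}(GL(X)_0)$-comodule algebras; in particular this is an isomorphism of algebras in $\Ver_p^{\ind}$. Dualizing at the level of functors of points yields a natural bijection $GL(X)(A) \cong GL(X)_0(A) \times \mathfrak{gl}(X)_{\neq 0}(A)$, which combined with the forward inclusion forces $GL(X)(A)$ to fill out the entire product.

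The main obstacle is identifying the PBW bijection with the one induced by $(\pi_1, \pi_{\neq 0})$, and not merely some abstract isomorphism. I would handle this by comparing on generators. The algebra $\mathcal{O}(GL(X))$ is generated by $(X \otimes X^*)^*$, and this subobject splits isotypically as $(X \otimes X^*)^*_{L_1} \oplus (X \otimes X^*)^*_{\neq 0}$. Under the PBW filtration of Theorem \ref{PBW} (with $\mathfrak{gl}(X)_{\neq 0}$ placed in degree one), these two summands correspond respectively to generators of $\mathcal{O}(GL(X)_0)$ and of $S(\mathfrak{gl}(X)_{\neq 0}^*)$. This is exactly the isotypic splitting of $\mathfrak{gl}(X)^*$ used to define $\pi_1$ and $\pi_{\neq 0}$ in Lemma \ref{product}, so the two bijections coincide, completing the proof.
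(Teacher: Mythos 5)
The forward inclusion in your proposal is essentially sound, but the reverse inclusion has a genuine gap that the paper's argument avoids entirely.

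The issue is your appeal to Theorem \ref{PBW}. That theorem gives \emph{an} isomorphism $\mathcal{O}(GL(X)) \cong \mathcal{O}(GL(X)_0) \otimes S(\mathfrak{gl}(X)_{\not=0}^*)$, but this isomorphism is not canonical --- like PBW over a field, it depends on a choice of splitting, and as stated in the paper it is constructed by passing to an associated graded. Having an abstract bijection $GL(X)(A) \cong GL(X)_0(A) \times \mathfrak{gl}(X)_{\not=0}(A)$ together with an injection $(\pi_1, \pi_{\not=0})$ of $GL(X)(A)$ into the same product does \emph{not} force the injection to be onto; that inference would need a cardinality or finiteness argument that is unavailable here, since these point sets can be infinite. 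You correctly flag that the real work is to identify the PBW bijection with $(\pi_1, \pi_{\not=0})$, but the final paragraph only sketches a generator-level comparison. That comparison would require knowing precisely how the isomorphism of Theorem \ref{PBW} is constructed (from \cite{Ven}, Theorem 6.15) and checking that a compatible choice of splitting exists --- in particular, one would need to verify that the $\mathcal{O}(GL(X)_0)$-equivariant splitting in the PBW construction can be taken to be the isotypic inclusion $\mathfrak{gl}(X)_{\not=0}^* \hookrightarrow \mathcal{O}(GL(X))$ and that the resulting algebra map agrees with $(\pi_1, \pi_{\not=0})^*$ on all of $\mathcal{O}(GL(X))$, not just on generators modulo higher filtration degree. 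None of this is carried out, and it is not obvious.

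The paper sidesteps PBW completely and proves surjectivity of $(\pi_1, \pi_{\not=0})$ directly. The key observation is elementary: since $A \otimes X$ is a free $A$-module of finite rank, an $A$-module endomorphism is an automorphism iff it is surjective, and surjectivity can be tested after reduction modulo each maximal ideal of $A$. By Lemma \ref{nilpotence}, $A_{\not=0}$ lies in every maximal ideal, so $f$ and $\pi_1(f)$ have the same reduction modulo any maximal ideal. Hence $f$ is an automorphism iff $\pi_1(f)$ is, with no constraint on $\pi_{\not=0}(f)$, which gives both the containment $\pi_1(GL(X)(A)) \subseteq GL(X)_0(A)$ and the surjectivity of $(\pi_1, \pi_{\not=0})$ simultaneously. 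You should replace the PBW step with this nilpotence-plus-Nakayama style argument; it is both shorter and strictly more elementary than the machinery you invoke.
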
 

\begin{proof} Since this one of the main results in the paper and is the only result of the section specific to $\Ver_{p}$, I want to give a roadmap of the proof before proceeding:

\begin{enumerate}

\item[1.] These projection maps obviously exhibit the analogous decomposition for $\mathfrak{gl}(X)$, so we want to use the embedding of $GL(X)$ inside $\mathfrak{gl}(X)$ to exhibit the decomposition for $GL(X)$.

\item[2.] With this in mind, we want to show the following facts:

\begin{enumerate}

\item[(a)] The projection $\pi_{1}$ maps $GL(X)(A)$ into $GL(X)_{0}(A)$ for any commutative algebra $A$ in $\Ver_{p}^{\ind}$. 

\item[(b)] $\pi_{1}$ restricted to $GL(X)(A)$ maps onto $GL(X)_{0}(A)$.

\item[(c)] Given any element of $GL(X)_{0}(A)$, the fiber of $\pi_{1}$ above this element lies inside $\gl(X)_{\not=0}(A)$. 

\item[(d)] The fiber defined in (c) is actually all of $\gl(X)_{\not=0}(A)$. 

\end{enumerate}

\item[3.] It is clear from the definition of the projection that (a) and (c) hold. We will thus show that (b) and (d) also hold.

\end{enumerate}

With that in mind, let $A$ be a commutative algebra in $\Ver_{p}^{\ind}$. As useful notation, given a morphism $f: X \rightarrow A \otimes X$, let $f_{1}, \ldots, f_{p-1}$ be the projections $\pi_{i}(f)$. To prove the theorem, we need to show that $f$ establishes an $\overline{A}$-module automorphism of $A \otimes X$ if and only if $f_{1}$ establishes an $A$-module automorphism of $X$. Since $A \otimes X$ is a free $A$-module with $X$ of finite length in $\Ver_{p}$, an $A$-module endomorphism is an automorphism if and only if it is surjective. 

Let $\mathfrak{m}$ be a maximal ideal of $A$. Then, as objects in $\Ver_{p}$, we can write $A = \mathbf{1} \oplus \mathfrak{m}$
with the $\mathbf{1}$ being the image of the unit. Hence, $A \otimes X = \mathbf{1} \otimes X \oplus \mathfrak{m} \otimes X$, and this holds for every maximal ideal $\mathfrak{m}$.

If $Y$ is a subobject of $A \otimes X$, it generates $A \otimes X$ if and only if the projection onto $\mathbf{1} \otimes X$ is surjective, for all maximal ideals $\mathfrak{m}$ in $A$. Now, let $Y$ be the image of $X$ in $A \otimes X$ under $f$ and let $Y'$ be the image under $f_{1}$ of $X$ in $\overline{A} \otimes X$. Note by definition of $\pi_{1}$ that $Y' = Y_{0}$. By Lemma \ref{nilpotence}, $A_{\not=0}$ is contained inside every maximal ideal of $A$. Hence, $Y$ generates $A \otimes X$ if and only if $Y_{\not=0}$ generates $\overline{A} \otimes X$, since after taking a quotient by a maximal ideal, $Y = Y'$.  

\end{proof}

\section{Representations of $GL(L_{i})$ for simple objects $L_{i}$}

For this section, let us consider the specific example of $X = L_{i}$. This is a very important example, since $GL(L_{i})$ play the role of the one-dimensional tori in $\Ver_{p}$ and are an important stepping stone to understanding $GL(X)$ in general. Let us examine the structure of this group in more detail. Proposition \ref{special-linear} gives us a Lie subalgebra $\sl(L_{i})$ along with a direct sum decomposition.

\begin{corollary} $\mathfrak{gl}(L_{i}) = \mathbf{1} \oplus \sl(L_{i})$ as a Lie algebra, where $\mathbf{1}$ is the central subalgebra that is the image of $\coev_{L_{i}}: \mathbf{1} \rightarrow L_{i} \otimes L_{i}^{*}$.

\end{corollary}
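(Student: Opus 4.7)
The plan is to observe that this corollary is an immediate specialization of Proposition \ref{scalars} to the object $X = L_{i}$, once we verify the nondegeneracy hypothesis on the categorical dimension. By Proposition \ref{scalars}, we already know that $\mathrm{sc}(\mathfrak{gl}(X)) \subseteq \mathfrak{gl}(X)$ is always a central Lie subalgebra, and the splitting $\mathfrak{gl}(X) = \mathrm{sc}(\mathfrak{gl}(X)) \oplus \sl(X)$ holds whenever $\dim(X) \neq 0$ in $\mathbf{k}$. So the only content left is to check this dimension condition for $X = L_{i}$.

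By Example \ref{Verlindeprop}(1), the categorical dimension of $L_{i}$ equals $i \bmod p$. Since the simple objects in $\Ver_{p}$ are indexed by $1 \le i \le p-1$, this is a nonzero element of $\mathbf{k}$. Therefore the evaluation map $\mathrm{ev}: \mathfrak{gl}(L_{i}) \to \mathbf{1}$ splits via $\frac{1}{i}\coev_{L_{i}}$, which realizes the desired decomposition. Note also that by definition $\mathrm{sc}(\mathfrak{gl}(L_{i}))$ is the copy of $\mathbf{1}$ given by the image of $\coev_{L_{i}}$, so this matches the statement of the corollary verbatim.

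There is essentially no obstacle here: the entire argument is just assembling Proposition \ref{scalars} with the explicit dimension computation from Example \ref{Verlindeprop}. If anything needs care, it is only to remark that the inverse of $i$ exists in $\mathbf{k}$ precisely because $1 \le i \le p-1$, which is exactly the range of indices for simple objects in $\Ver_{p}$; this is what ensures the central subalgebra $\mathbf{1}$ and $\sl(L_{i})$ together exhaust $\mathfrak{gl}(L_{i})$ rather than only giving an injection of $\mathbf{1}$ into the quotient.
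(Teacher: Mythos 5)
Your proof is correct and matches the paper's intended argument: the corollary is stated without proof, as an immediate consequence of Proposition \ref{scalars} combined with the nonvanishing of $\dim(L_i) = i \bmod p$ for $1 \le i \le p-1$. You fill in exactly the details the paper leaves implicit, including the identification of $\mathrm{sc}(\mathfrak{gl}(L_i))$ with the image of $\coev_{L_i}$.
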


Since $\sl(L_{i})$ is a Lie algebra with $\sl(L_{i})_{0} = 0$, we can apply the constructions in \cite[Section 7.1]{Ven} to get an affine group scheme associated to $\sl(L_{i})$. 

\begin{definition} Define $SL(L_{i})$ as the affine group scheme in $\Ver_{p}$ with function algebra $U(\sl(L_{i}))^{*}.$

\end{definition}

\begin{remark} By Lemma \ref{nilpotence}, $U(\sl(L_{i}))^{*}$ is an object in $\Ver_{p}$, rather than an ind-object. 

\end{remark}

Since $\sl(L_{i}) = \mathfrak{gl}(L_{i})_{\not=0}$, Theorem \ref{GL-smooth} gives us the immediate consequence.

\begin{corollary} \label{torus-description} As an affine group scheme of finite type, $GL(L_{i}) = GL(1, \mathbf{k}) \times SL(L_{i}).$

\end{corollary}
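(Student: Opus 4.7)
My plan is to combine the scheme-level decomposition from Theorem \ref{GL-smooth} with the Lie algebra decomposition from Proposition \ref{scalars}, and then upgrade to a group scheme isomorphism using the equivalence, stated in the introduction and proved in \cite{Ven}, between affine group schemes in $\Ver_p$ and triples consisting of an underlying ordinary Hopf algebra, a Lie algebra in $\Ver_p$, and a compatible adjoint action.

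First apply Theorem \ref{GL-smooth} to $X = L_i$. Since $L_i$ is simple, its isotypic decomposition contains only the single one-dimensional multiplicity space attached to $L_i$ itself, so the earlier corollary gives $GL(L_i)_0 = GL(1,\mathbf{k})$. Because $L_i$ is simple,
\[
\dim_{\mathbf{k}} \Hom_{\Ver_p}(\mathbf{1}, L_i \otimes L_i^{*}) = \dim_{\mathbf{k}} \End_{\Ver_p}(L_i) = 1,
\]
so the unique $\mathbf{1}$-isotypic component of $\mathfrak{gl}(L_i) = L_i \otimes L_i^{*}$ is $\mathrm{sc}(\mathfrak{gl}(L_i))$. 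The categorical dimension of $L_i$ is $i$, which is nonzero in $\mathbf{k}$ for $1 \le i \le p-1$, so Proposition \ref{scalars} supplies the Lie algebra decomposition $\mathfrak{gl}(L_i) = \mathbf{1} \oplus \sl(L_i)$; comparing $\mathbf{1}$-components then forces $\sl(L_i) = \mathfrak{gl}(L_i)_{\not=0}$. Substituting into Theorem \ref{GL-smooth} yields the scheme isomorphism $GL(L_i) \cong GL(1,\mathbf{k}) \times \sl(L_i)$.

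Next I construct a group homomorphism $\mu \colon GL(1,\mathbf{k}) \times SL(L_i) \to GL(L_i)$. The first factor maps in as the subgroup scheme $GL(L_i)_0 \hookrightarrow GL(L_i)$; this image is central in $GL(L_i)$ because an element of $GL(L_i)_0 = GL(V_i)$ with $V_i \cong \mathbf{k}$ acts on $L_i$ by a scalar and therefore commutes with every $A$-module automorphism of $A \otimes L_i$. The second factor maps in via the universality of $GL(L_i)$ applied to the $SL(L_i)$-representation structure on $L_i$ induced, through the identification $\mathcal{O}(SL(L_i)) = U(\sl(L_i))^{*}$, from the tautological action of $\sl(L_i) \subseteq \mathfrak{gl}(L_i)$ on $L_i$. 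Since the first image is central, the two homomorphisms commute in $GL(L_i)$, and their product defines $\mu$.

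Finally, I conclude that $\mu$ is an isomorphism by matching the three pieces of data on both sides. Applying Theorem \ref{PBW} to $SL(L_i)$ together with the fact that $\sl(L_i) = \sl(L_i)_{\not=0}$ shows that the underlying ordinary group of $SL(L_i)$ is trivial, so on underlying ordinary group schemes $\mu$ restricts to the identity $GL(1,\mathbf{k}) \xrightarrow{\sim} GL(L_i)_0$. On Lie algebras $\mu$ induces the tautological identification $\mathbf{1} \oplus \sl(L_i) \cong \mathfrak{gl}(L_i)$ of Proposition \ref{scalars}. The adjoint action of the underlying ordinary group on the Lie algebra is trivial on both sides: obviously so for the direct product, and for $GL(L_i)$ because its ordinary part is central. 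I expect the main obstacle to be the careful setup of the two legs of $\mu$, in particular verifying centrality at the group rather than merely the infinitesimal level; once this is settled, the remaining verifications are routine bookkeeping inside the equivalence of data.
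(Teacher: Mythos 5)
Your proof is correct and, if anything, is more complete than the paper's own argument, which simply declares the corollary ``an immediate consequence'' of Theorem \ref{GL-smooth}. That theorem only produces a decomposition of $GL(X)$ as a \emph{scheme}, and one should be cautious: the fiber of $\pi_1$ above the identity is not obviously closed under the group operation, since multiplying two elements of $\mathfrak{gl}(L_i)_{\not=0}(A)$ can produce contributions in the $\mathbf{1}$-isotypic component. You correctly identify that the clean way to upgrade to a group-scheme isomorphism is the Harish--Chandra pair equivalence from \cite{Ven}, and you verify the three required compatibilities (underlying ordinary group, Lie algebra, adjoint action) carefully, with centrality of the scalar copy of $GL(1,\mathbf{k})$ playing the key role. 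Two minor remarks. First, your appeal to Theorem \ref{PBW} for the triviality of $SL(L_i)_0$ is slightly indirect; the cleaner justification is that $SL(L_i)$ is \emph{constructed} in \cite[Section 7.1]{Ven} precisely as the group scheme with trivial underlying ordinary group and Lie algebra $\sl(L_i)$, so $\mathcal{O}(SL(L_i)) = U(\sl(L_i))^{*}$ has trivial ordinary part by fiat. Second, your opening paragraph recovering the scheme-level isomorphism from Theorem \ref{GL-smooth} is never actually used in the rest of the argument; the construction of $\mu$ and the Harish--Chandra matching stand on their own, so that paragraph could be dropped.
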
 

\begin{theorem} \label{Lie-simplicity} For $i = 1, p-1$, $\sl(L_{i}) = 0$. For $i$ between $2$ and $p-2$, $\sl(L_{i})$ is a simple Lie algebra, i.e., it is nonzero and has no proper, nontrivial Lie ideals. 

\end{theorem}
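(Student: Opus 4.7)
The plan is to split by cases using the fusion rules recalled in Example \ref{Verlindeprop}. The boundary cases are immediate: for $i = 1$, $L_1 = \mathbf{1}$ and so $\mathfrak{gl}(L_1) = \mathbf{1}$ with $\mathrm{ev}$ the identity, giving $\sl(L_1) = 0$; for $i = p-1$, the fusion formula forces $L_{p-1} \otimes L_{p-1} \cong L_1$ (only the single summand indexed by $k=1$ survives, since $\min(p-1,p-1,1,1) = 1$), and since simples in $\Ver_p$ are distinguished by categorical dimension, $L_{p-1}$ is self-dual and $\mathfrak{gl}(L_{p-1}) \cong \mathbf{1}$. Because $\dim L_{p-1} = -1 \ne 0$, the evaluation map is an isomorphism and $\sl(L_{p-1}) = 0$.

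For the main range $2 \le i \le p-2$, set $m = \min(i, p-i) \ge 2$. The fusion rule gives
\[
\mathfrak{gl}(L_i) = L_i \otimes L_i^* \cong \bigoplus_{k=1}^{m} L_{2k-1},
\]
and $\mathrm{ev}$ surjects the $L_1$-summand since $\dim L_i = i$ is coprime to $p$, so $\sl(L_i) \cong \bigoplus_{k=2}^{m} L_{2k-1}$, which is nonzero because $m \ge 2$. When $m = 2$ (i.e., $i \in \{2, p-2\}$), $\sl(L_i) = L_3$ is itself a simple object of $\Ver_p$, so it has no proper nontrivial subobjects and Lie simplicity is automatic. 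For $m \ge 3$, any Lie ideal $I$ is in particular a subobject, hence $I = \bigoplus_{k \in S} L_{2k-1}$ for some $S \subseteq \{2, \ldots, m\}$; the plan is to force $S = \{2, \ldots, m\}$ whenever $S$ is nonempty by propagating membership both upward and downward along the chain by bracketing against the summand $L_3 \subseteq \sl(L_i)$. Since the fusion rule yields $L_3 \otimes L_{2k-1} \supseteq L_{2k-3} \oplus L_{2k+1}$ whenever these indices are in range, the argument reduces to the following key claim: for each $2 \le k \le m$, the bracket component $L_3 \otimes L_{2k-1} \to L_{2k+1}$ is nonzero when $k+1 \le m$, and the component $L_3 \otimes L_{2k-1} \to L_{2k-3}$ is nonzero when $k \ge 3$. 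Granting this, a direct induction propagates $S$ upward and downward to all of $\{2, \ldots, m\}$.

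The main obstacle is the key claim on bracket nonvanishing, and my plan is to deduce it from the principal-$SL_2$ restriction functor $\Phi : \Ver_p(SL_i) \to \Ver_p$ recalled in Section 2.3, which is a symmetric tensor functor sending the tautological $V_i$ to $L_i$ and hence sending the Lie algebra $\sl(V_i) \subseteq V_i \otimes V_i^*$ to $\sl(L_i) \subseteq \mathfrak{gl}(L_i)$ compatibly with the commutator bracket. In $\Ver_p(SL_i)$, the restriction of $\sl(V_i)$ to the principal $\sl_2$-triple decomposes classically as $V_3 \oplus V_5 \oplus \cdots \oplus V_{2i-1}$, and the nonvanishing of the bracket components $V_3 \otimes V_{2k-1} \to V_{2k \pm 1}$ amounts to raising and lowering by the principal Chevalley generators, which is visibly nonzero. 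Since $\Phi$ preserves the bracket and the targets $L_{2k \pm 1}$ remain nonnegligible in the allowed range $2k \pm 1 \le 2m - 1 \le p-2$, these nonvanishings descend to $\sl(L_i)$. Should this route prove delicate, a backup plan is to exploit that $\mathfrak{gl}(L_i) \cong \mathrm{End}(L_i)$ is a simple associative algebra in $\Ver_p$ (since $L_i$ is a simple object) and categorify the classical proof that $\sl_n$ is a simple Lie algebra whenever $p \nmid n$, expressing everything in terms of morphisms built from coevaluations, evaluations, and the braiding.
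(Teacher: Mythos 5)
Your structural setup is sound: the fusion rule gives $\gl(L_i) \cong \bigoplus_{k=1}^{m} L_{2k-1}$ with $m = \min(i, p-i)$ and each summand multiplicity one, so $\sl(L_i) \cong \bigoplus_{k=2}^{m} L_{2k-1}$; any Lie ideal, being a subobject of a multiplicity-free semisimple object, must be a partial direct sum; and the $m = 2$ case is immediate. The boundary cases are also fine.

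However, the key claim driving your propagation for $m \ge 3$ is false. The summand $L_3 \subseteq \sl(L_i)$ is (the image of) the principal $\sl_2$ subalgebra, and the decomposition $\sl(V_i) \cong V_3 \oplus V_5 \oplus \cdots$ is precisely the decomposition of the adjoint representation as a module over that subalgebra via the bracket. Consequently $[V_3, V_{2k-1}] \subseteq V_{2k-1}$ for every $k$: bracketing with the principal Chevalley generators raises and lowers weights \emph{within} the irreducible $V_{2k-1}$, not between adjacent irreducibles. The components $V_3 \otimes V_{2k-1} \to V_{2k\pm 1}$ of the bracket therefore vanish, and the inductive propagation of $S$ up and down the chain does not get off the ground. (The brackets that do jump between summands are things like $[V_{2j-1}, V_{2k-1}]$ with $j, k \ge 3$; e.g.\ in $\sl_3$ one has $[V_5, V_5] = V_3$. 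But controlling these is genuinely harder and you would still need a nonvanishing argument for each.) Your backup sketch — reducing to the simplicity of $\sl_n$ over a field with $p \nmid n$ — is the right idea but is left entirely schematic.

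The paper's proof avoids all of this. After the easy reduction to $2 \le i \le (p-1)/2$, one lifts along the semisimplification functor $\pi: \Rep_{\mathbf k}(\mathbb{Z}/p\mathbb{Z}) \to \Ver_p$. A Lie ideal $X \subseteq \sl(L_i)$ lifts to a subobject $Y \subseteq \sl(V_i)$ in $\Rep_{\mathbf k}(\mathbb{Z}/p\mathbb{Z})$; for $2 \le i \le p-2$ the object $V_i \otimes V_i^*$ contains no negligible indecomposables, so the lift is unique and $\pi$ is compatible with evaluation, coevaluation and braiding, whence $Y$ is a Lie ideal of $\sl(V_i)$. Since $i < p$, $\sl(V_i) = \sl_i(\mathbf{k})$ is classically simple (and the cyclic group action cannot produce nontrivial ideals where there are none), so $Y$, hence $X$, is trivial. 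This makes the reduction to classical simplicity of $\sl_i$ precise in one stroke, where your propagation argument would require a case-by-case bracket computation that, as it stands, you have set up incorrectly.
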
  

\begin{proof} The case where $i = 1, p-1$ is immediate. Furthermore, $\sl(L_{i})$ and $\sl(L_{p-i})$ are isomorphic. Hence, we may assume $i$ is between $2$ and $\frac{p-1}{2}$.  To see the simplicity of $\sl(L_{i})$, we use the quotient functor $\pi$ from $\Rep_{\mathbf{k}}(\mathbb{Z}/p\mathbb{Z})$ to $\Ver_{p}$. Note that $L_{i}$ is the image under $\pi$ of the indecomposable $V_{i}$ of dimension $i$. Since $i < p$, $\sl(V_{i})$ is a simple Lie algebra in $\Rep_{\mathbf{k}}(\mathbb{Z}/p\mathbb{Z})$. Now, if $X$ is a Lie ideal of $\sl(L_{i})$, $X$ has a lift $Y \subseteq \sl(V_{i})$ in $\Rep_{\mathbf{k}}(\mathbb{Z}/p\mathbb{Z})$ since $\pi$ is essentially surjective. Moreover, since $i$ is between $2$ and $p-2$, $V_{i} \otimes V_{i}^{*}$ does not contain any negligible indecomposable objects. Hence, $Y$ is the unique lift of $X$ as a subobject of $\sl(V_{i})$ and since $\pi$ is compatible with the evaluation, coevaluation and braiding maps that define the Lie algebra structure on both $\sl(V_{i})$ and $\sl(L_{i})$, $Y$ must be a Lie ideal of $\sl(V_{i})$. $\sl(V_{i})$ is a simple Lie algebra, hence $Y$ is either $0$ or $\sl(V_{i})$. Hence, $X$ is either $0$ or all of $\sl(L_{i})$. Thus, $\sl(L_{i})$ is simple.

\end{proof} 

An immediate consequence of this theorem is the following.

\begin{corollary} $SL(L_{i})$ is a simple finite group scheme in $\Ver_{p}$. \end{corollary}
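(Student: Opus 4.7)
The proof has two parts: finiteness and simplicity. Finiteness is essentially automatic from the construction. Since $\sl(L_i) \subseteq \mathfrak{gl}(L_i)$ satisfies $\sl(L_i)_0 = 0$ (noted just before the definition of $SL(L_i)$), Lemma \ref{nilpotence} applied to each simple summand of $\sl(L_i)$ gives $S^N(\sl(L_i)) = 0$ for $N$ sufficiently large. Hence $U(\sl(L_i))$ is a finite-length object of $\Ver_p$, and so is its linear dual $\mathcal{O}(SL(L_i))$, making $SL(L_i)$ a finite group scheme.

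For simplicity, the plan is to reduce to the Lie-algebra statement in Theorem \ref{Lie-simplicity} via the dictionary between affine group schemes in $\Ver_p$ and Lie algebras established in \cite{Ven}. First observe that the underlying ordinary group scheme $SL(L_i)_0$ is trivial: since $\sl(L_i)_0 = 0$, Theorem \ref{PBW} forces $\mathcal{O}(SL(L_i)_0) \cong \mathbf{1}$. Now let $H \subseteq SL(L_i)$ be a closed normal subgroup scheme. Its underlying ordinary group scheme $H_0 \subseteq SL(L_i)_0$ is forced to be trivial, and so by the reconstruction results of \cite{Ven} the subgroup scheme $H$ is recovered from $\Lie(H) \subseteq \sl(L_i)$ alone. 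Normality of $H$ translates into $\Lie(H)$ being preserved by the adjoint action of $SL(L_i)$; differentiating this action yields the commutator bracket on $\sl(L_i)$, so $\Lie(H)$ is a Lie ideal of $\sl(L_i)$. By Theorem \ref{Lie-simplicity}, this ideal is either $0$ or $\sl(L_i)$, whence $H$ is either trivial or all of $SL(L_i)$.

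The principal obstacle is invoking the dictionary in the last step precisely — namely, the statement that for a finite group scheme $G$ in $\Ver_p$ with trivial underlying ordinary part, closed normal subgroup schemes of $G$ correspond bijectively to adjoint-stable Lie ideals of $\Lie(G)$, which in the presence of the commutator bracket is the usual notion of Lie ideal. This should follow from combining Theorem \ref{PBW} with the equivalence in Section 7 of \cite{Ven} between such group schemes and their Lie algebras; once it is in hand, Theorem \ref{Lie-simplicity} closes the argument.
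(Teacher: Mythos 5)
Your proof is correct and follows essentially the same route as the paper: finiteness is handled via Lemma \ref{nilpotence} (the paper records this in the remark immediately after defining $SL(L_i)$, noting that $U(\sl(L_i))^*$ is a genuine object of $\Ver_p$), and simplicity is obtained by invoking the correspondence between normal subgroup schemes of $SL(L_i)$ and Lie ideals of $\sl(L_i)$ from \cite{Ven} together with Theorem \ref{Lie-simplicity}. The only difference is that you spell out why the correspondence reduces to Lie ideals (trivial underlying ordinary part, adjoint action differentiating to the bracket), whereas the paper simply cites \cite{Ven}[Theorem 1.2].
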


\begin{proof} This follows from simplicity of $\sl(L_{i})$ and correspondence between normal subgroups of $SL(L_{i})$ and ideals of $\sl(L_{i})$ from \cite{Ven}[Theorem 1.2].

\end{proof}

We next want to describe the category of representations of $PGL(L_{i})$. To do so, we need the following well known structural result, analogous to the decomposition for $\Ver_{p}$ in \cite{O}.

\begin{proposition} As a symmetric tensor category $\Ver_{p}(SL_{i}) = \Ver_{p}^{+}(SL_{i}) \boxtimes \mathcal{C}$
where $\mathcal{C}$ is a pointed category, i.e., one in which every simple object $X$ has $X \otimes X^{*} \cong \mathbf{1}$, and if $i$ is even. $\mathcal{C}$ is the tensor subcategory of $\Ver_{p}(SL_{i})$ generated by the invertible objects.

\end{proposition}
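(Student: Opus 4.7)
My plan is to realize $\Ver_{p}(SL_{i})$ as a $\mathbb{Z}/i\mathbb{Z}$-graded fusion category whose identity component is $\Ver_{p}^{+}(SL_{i})$, construct an order-$i$ cyclic subgroup of invertible simple objects sectioning this grading, and then take $\mathcal{C}$ to be the pointed tensor subcategory these generate. The grading comes from the action of the center $Z(SL_{i})\cong \mathbb{Z}/i\mathbb{Z}$ on an irreducible Weyl module $V(\lambda)$ by the character $|\lambda|\bmod i$, where $|\lambda|=\sum_{j}\lambda_{j}$. Tensor products of Weyl modules are additive in $|\lambda|$, so this yields a $\mathbb{Z}/i\mathbb{Z}$-grading on the tilting subcategory that descends to $\Ver_{p}(SL_{i})$; by the definition given in the text, the identity component is exactly $\Ver_{p}^{+}(SL_{i})$.

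To section the grading, I would show that the image in $\Ver_{p}(SL_{i})$ of $T((p-i)\omega_{1})\cong V((p-i)\omega_{1})\cong S^{p-i}V_{\mathrm{std}}$---call it $X_{1}$---is an invertible simple. Its Weyl dimension is $\binom{p-1}{i-1}\equiv (-1)^{i-1}\pmod p$, so its categorical dimension is $\pm 1$ and it is invertible in the semisimplification. Its degree in the grading is $p-i\equiv p\pmod i$, which is coprime to $i$ since $p$ is prime and $i<p$; hence $X_{1}$ generates a cyclic group of $i$ invertible simples, one in each graded component. I then take $\mathcal{C}$ to be the pointed tensor subcategory they span. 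The sign $(-1)^{i-1}$ accounts for the even/odd dichotomy in the statement: when $i$ is even, the generator has categorical dimension $-1$, so $\mathcal{C}$ contains genuinely nontrivial invertibles and agrees with the full subcategory generated by the invertible objects of $\Ver_{p}(SL_{i})$.

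Finally I would verify that the symmetric tensor functor $F\colon \Ver_{p}^{+}(SL_{i})\boxtimes \mathcal{C}\to \Ver_{p}(SL_{i})$ with $Y\boxtimes X\mapsto Y\otimes X$ is an equivalence. Essential surjectivity is immediate: any simple $Z$ of degree $d$ satisfies $Z\cong (Z\otimes X_{1}^{-d})\otimes X_{1}^{d}$, with $Z\otimes X_{1}^{-d}\in \Ver_{p}^{+}(SL_{i})$. Full faithfulness reduces to checking that the simples $Y\boxtimes X_{1}^{d}$ go to pairwise non-isomorphic simples on the right, which holds because tensoring with the invertible $X_{1}^{d}$ is an autoequivalence of $\Ver_{p}(SL_{i})$. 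The main obstacle is confirming that $T((p-i)\omega_{1})$ genuinely survives as a simple invertible in the semisimplification: one must verify that $(p-i)\omega_{1}$ lies in the regime where the Weyl and tilting modules coincide, that the corresponding partition has first row exactly $p-i$ so its semisimple image is a nonzero simple object of $\Ver_{p}(SL_{i})$, and that the binomial congruence above genuinely computes its categorical dimension rather than being modified by corrections from other tilting summands.
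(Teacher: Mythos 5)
Your approach is genuinely different from the paper's, which simply asserts that the unique decomposition of a simple as $X^{+}\otimes L$ follows from the fusion rules in \cite{GK, GM}. You instead reconstruct the structure explicitly: the $\mathbb{Z}/i\mathbb{Z}$-grading from the central character of $SL_{i}$, an explicit generator $X_{1}=T((p-i)\omega_{1})$ of the invertibles, and a formal check that the Deligne product functor is a bijection on simples. The architecture is sound: the observation that $(p-i)\omega_{1}$ lies in the closed fundamental alcove (so $T=V=L$ there, and the object survives semisimplification since the first row of the partition is exactly $p-i$), the use of $\gcd(p-i,i)=\gcd(p,i)=1$ to see that $X_{1}$ generates a cyclic group hitting every graded component, and the resulting bijection on simples are all correct. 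This is a real proof where the paper offers a citation.

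There is, however, a genuine gap in your invertibility argument. You compute the categorical dimension $\binom{p-1}{i-1}\equiv(-1)^{i-1}\pmod p$ and conclude from $\dim(X_{1})=\pm1$ that $X_{1}$ is invertible. This inference fails in general: categorical dimensions in $\Ver_{p}(SL_{i})$ take values in $\mathbb{F}_{p}\subset\mathbf{k}$, and nonzero objects can have categorical dimension $0$ (already in $\Ver_{p}$, e.g. $L_{1}\oplus L_{p-1}$), so writing $X_{1}\otimes X_{1}^{*}=\mathbf{1}\oplus Y$ and deducing $\dim(Y)=0$ does not force $Y=0$. What actually detects invertibility of a simple object in a fusion category is the Frobenius--Perron dimension, which for $\Ver_{p}(SL_{i})$ is given by the quantum Weyl dimension at $q=e^{\pi i/p}$. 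One computes
\[
\mathrm{FPdim}\bigl(T((p-i)\omega_{1})\bigr)=\prod_{k=1}^{i-1}\frac{[\,p-i+k\,]_{q}}{[\,k\,]_{q}}=1,
\]
using the symmetry $[p-n]_{q}=[n]_{q}$, and $\mathrm{FPdim}=1$ is equivalent to invertibility for a simple object. Once this repair is made, the rest of your argument goes through unchanged. A smaller point: the sign $(-1)^{i-1}$ does not explain the nontriviality of $\mathcal{C}$, since $X_{1}$ has nonzero degree for all $i\ge2$ and is therefore a nontrivial invertible regardless of parity; the sign only governs whether $\mathcal{C}$ is Tannakian (isomorphic to $\Rep(\mathbb{Z}/i\mathbb{Z})$, $i$ odd) or has a super flavor ($i$ even), which is presumably the distinction the proposition's clause is pointing at.
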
 

\begin{proof} The inclusion functors of $\Ver_{p}^{+}(SL_{i})$ and $\mathcal{C}$ into $\Ver_{p}(SL_{i})$ induce a functor from the Deligne tensor product to $\Ver_{p}(SL_{i})$. Since all categories are finite semisimple, we just need to show this functor induces a bijection on simple objects. Simple objects of $\Ver_{p}^{+}(SL_{i}) \boxtimes \mathcal{C}$ are tensor products of simples in each category. Hence, we need to show that every simple object $X in \Ver_{p}(SL_{i})$ can be uniquely decomposed as $X^{+} \otimes L$ with $X^{+}$ a simple object in $\Ver_{p}^{+}(SL_{i})$ and $L$ an invertible object. This follows from the fusion rules in $\Ver_{p}(SL_{i})$ (see \cite{GK}, \cite{GM} for a description of the fusion rules.)

\end{proof}

\begin{definition} Let $L$ be the simple object in $\Ver_{p}^{+}(SL_{i})$ corresponding to the adjoint representation of $SL_{i}$. 

\end{definition} 

It is well known that $L$ tensor generates $\Ver_{p}^{+}(SL_{i})$ and that the tensor product of any simple in $\Ver_{p}^{+}(SL_{i})$ not isomorphic to $\mathbf{1}$ with its dual includes $L$ as a summand. This can be seen from the fusion ring of $\Ver_{p}(SL_{i})$ (details on the fusion rules can be found in \cite{GK} or \cite{GM}). Hence, 

\begin{proposition} \label{subcat} $\Ver_{p}^{+}(SL_{i})$ has no non-trivial, proper tensor subcategories. 

\end{proposition}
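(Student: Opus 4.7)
The plan is to use the two facts stated immediately before the proposition: that $L$ tensor generates $\Ver_p^+(SL_i)$, and that $S \otimes S^*$ contains $L$ as a direct summand for every simple $S \in \Ver_p^+(SL_i)$ with $S \not\cong \mathbf{1}$. A tensor subcategory, in the sense used here, is a full subcategory closed under tensor products, duals, and direct summands (the latter because the ambient category is semisimple and we are working with fusion subcategories). So the entire argument reduces to showing that any non-trivial tensor subcategory $\mathcal{D} \subseteq \Ver_p^+(SL_i)$ must contain $L$.

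First I would pick a non-trivial tensor subcategory $\mathcal{D}$. By non-triviality, $\mathcal{D}$ contains some simple object $S \not\cong \mathbf{1}$. Closure under duals gives $S^* \in \mathcal{D}$, closure under tensor product gives $S \otimes S^* \in \mathcal{D}$, and by the cited fusion-ring fact, $L$ occurs as a direct summand of $S \otimes S^*$. Closure under direct summands then forces $L \in \mathcal{D}$.

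Once $L \in \mathcal{D}$, since $L$ tensor generates $\Ver_p^+(SL_i)$, every simple object of $\Ver_p^+(SL_i)$ appears as a direct summand of some tensor power $L^{\otimes n}$. Each such tensor power lies in $\mathcal{D}$, so by closure under summands every simple object of $\Ver_p^+(SL_i)$ lies in $\mathcal{D}$, whence $\mathcal{D} = \Ver_p^+(SL_i)$.

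There is no serious obstacle here — the proposition is essentially a corollary of the two fusion-theoretic facts stated just above it. The only subtle point is making sure we interpret ``tensor subcategory'' as being closed under duals and summands in addition to tensor product; this is standard for fusion subcategories of a fusion category, and the argument would fail without it, since otherwise one could consider non-full additive subcategories generated by a proper sub-collection of simples that is not closed in this sense.
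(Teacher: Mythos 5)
Your proof is correct and takes essentially the same approach as the paper: the paper states the proposition with no further proof as an immediate ``Hence,'' following the two fusion-theoretic facts about $L$ (that it tensor-generates $\Ver_p^+(SL_i)$ and that $S \otimes S^*$ contains $L$ as a summand for any non-unit simple $S$), and you spell out exactly that implication.
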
 

Let $\pi$ be the fundamental group of $\Ver_{p}(SL_{i})$ and $\pi_{+}$ be the fundamental group of $\Ver_{p}^{+}(SL_{i})$ all viewed as finite affine group schemes in $\Ver_{p}$. Since $L_{i}$ is the image of the tautological representation in $\Ver_{p}(SL_{i})$ under the Verlinde fiber functor, $\pi$ acts on $L_{i}$ . Hence, we have a homomorphism  $\pi \rightarrow GL(L_{i})$ as group schemes in $\Ver_{p}$. 
Let $\phi: \pi_{+} \rightarrow SL(L_{i})$ be the composition of the above map with the inclusion of $\pi_{+}$ into $\pi$ and the projection of $GL(L_{i})$ onto $SL(L_{i})$. 

\begin{theorem} $\phi$ is an isomorphism of affine group schemes in $\Ver_{p}$.

\end{theorem}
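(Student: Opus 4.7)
The plan is to show $\phi$ is an isomorphism in three steps: first, that it is a closed immersion; second, that $\Lie(\phi)$ is an isomorphism; and third, combine these via Theorem \ref{PBW} to conclude. The main tools are Proposition \ref{subcat} and the irreducibility of the adjoint object $L \in \Ver_p^+(SL_i) = \Rep(\pi_+)$, whose underlying $\Ver_p$-object is $\sl(L_i)$.

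For the closed immersion step, I would trace the construction $\pi_+ \hookrightarrow \pi \to GL(L_i) \to SL(L_i)$ to identify the pullback along $\phi$ of the adjoint representation of $SL(L_i)$ with the canonical $\pi_+$-representation on $L$ (both arise from the natural action of $\pi_+ \hookrightarrow \pi$ on $L_i \otimes L_i^* = \mathbf{1} \oplus \sl(L_i)$). Hence $\ker\phi$ acts trivially on $L$, and therefore on the full tensor subcategory generated by $L$. By Proposition \ref{subcat}, this subcategory is all of $\Ver_p^+(SL_i)$, so $\ker\phi$ acts trivially on every object of $\Rep(\pi_+)$ and must be trivial.

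For the Lie algebra step, the image $\Lie(\pi_+) \subseteq \sl(L_i)$ of $\Lie(\phi)$ is a $\pi_+$-stable subobject by functoriality of the adjoint action, so irreducibility of $L$ in $\Rep(\pi_+)$ forces $\Lie(\pi_+) \in \{0, \sl(L_i)\}$. To rule out $\Lie(\pi_+) = 0$: Theorem \ref{PBW} would then give $\pi_+ = (\pi_+)_0$, an ordinary group scheme; combined with $SL(L_i)_0 = 1$ (which follows from $\sl(L_i)_0 = 0$, since $\mathcal{O}(SL(L_i)) = U(\sl(L_i))^*$ is connected as a Hopf algebra), the closed immersion $\phi$ restricts to $(\pi_+)_0 \hookrightarrow 1$, forcing $\pi_+ = 1$. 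But $\Rep(\pi_+) = \Ver_p^+(SL_i)$ strictly contains $\mathrm{Vec}$: the simple $L$ has underlying $\Ver_p$-object $\sl(L_i) \notin \mathrm{Vec}$. This contradiction establishes $\Lie(\pi_+) = \sl(L_i)$.

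For the final step, Theorem \ref{PBW} identifies both $\mathcal{O}(\pi_+)$ and $\mathcal{O}(SL(L_i))$ with $S(\sl(L_i)^*)$ as objects in $\Ver_p$, of the same finite length. The surjection $\mathcal{O}(SL(L_i)) \twoheadrightarrow \mathcal{O}(\pi_+)$ induced by the closed immersion $\phi$ is then between objects of equal finite length, hence an isomorphism of Hopf algebras, proving $\phi$ itself is an isomorphism. The main obstacle is ruling out $\Lie(\pi_+) = 0$ in the second step, which requires simultaneously invoking PBW, the behavior of underlying-ordinary schemes under closed immersions, the infinitesimality of $SL(L_i)$, and the strict containment $\Rep(\pi_+) \supsetneq \mathrm{Vec}$.
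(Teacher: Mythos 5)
Your proposal is correct, and the injectivity half runs parallel to the paper's: the paper deduces simplicity of $\pi_{+}$ from Proposition \ref{subcat} (no nontrivial proper tensor subcategories) and concludes $\ker\phi$ must be trivial, while you unwind the same fact more explicitly by identifying the pullback of the adjoint representation with the $\pi_{+}$-action on $L$ and showing $\ker\phi$ acts trivially on a tensor generator. Your version has the minor advantage of not leaving implicit the step that $\phi$ is nontrivial (needed to rule out $\ker\phi = \pi_{+}$), since Tannakian faithfulness directly kills $\ker\phi$. The surjectivity half, however, is where you genuinely diverge. The paper proves surjectivity externally: it lifts the image of $\phi$ along the Verlinde fiber functor to a nontrivial subgroup of $SL(X)$ inside $\Ver_{p}(SL_{i})$, where $\sl(X) = L$ is a \emph{simple object}, so $SL(X)$ has no proper nontrivial subgroups and the image must be everything. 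You instead stay entirely inside $\Ver_{p}$: you use irreducibility of $L$ as a $\pi_{+}$-representation to pin down $\Lie(\pi_{+}) \in \{0, \sl(L_{i})\}$, rule out the zero case by observing $\Rep(\pi_{+}) \neq \mathrm{Vec}$ (via the characterization in Definition of $\Rep(G)$ and $(\pi_{+})_{0} \hookrightarrow SL(L_{i})_{0} = 1$), and then conclude by the PBW decomposition (Theorem \ref{PBW}) plus a length count, since a surjection of finite-length Hopf algebras of equal length is an isomorphism. Both proofs are valid; the paper's is shorter and leans on the ambient tilting category, while yours is self-contained in $\Ver_{p}$ and illustrates how the PBW theorem can substitute for the lifting argument. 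One small point worth spelling out: you write of ``the image $\Lie(\pi_{+}) \subseteq \sl(L_{i})$'' as if it were automatically all of $\Lie(\pi_{+})$; that identification requires injectivity of $\Lie(\phi)$, which does hold because a closed immersion of affine group schemes surjects on function Hopf algebras and hence injects on primitives of the distribution algebra --- but this deserves an explicit sentence.
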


\begin{proof}  $\pi_{+}$ is simple because its representation category has no nontrivial, proper tensor subcategories. If there was a nontrivial, proper, normal subgroup $N$ of $\pi_{+}$, then the category of representations on which $N$ is trivial would be a non-trivial proper symmetric tensor subcategory of $\Rep(\pi_{+})$. Hence, $\phi$ is injective. For surjectivity, we need to use the Verlinde fiber functor from $\Ver_{p}(SL_{i})$ to $\Ver_{p}$. Note that $SL(L_{i})$ is the image under the fiber functor of $SL(X)$ where $X$ is the tautological representation. The image of $\phi$ lifts to a nontrivial subgroup of $SL(X)$, as $\phi$ can be defined for in $\Ver_{p}(SL_{i})$ rather than just in $\Ver_{p}$. But in $\Ver_{p}(SL_{i})$, $SL(X)$ does not have any proper subgroups as $\sl(X)$ is a simple object. Hence, $\phi$ must be surjective.

\end{proof} 

As a result of this theorem and Tannakian reconstruction, we can classify the representations of $SL(L_{i})$. 

\begin{corollary} \label{RepGLi} Let $\mathcal{C}$ be the category of representations of $SL(L_{i})$ in $\Ver_{p}$ on which the two actions of the fundamental group of $\Ver_{p}$, one coming from the representation being an object in $\Ver_{p}$ and one coming from the map from the fundamental group into $SL(L_{i})$, are compatible. Then $\mathcal{C} \cong \Ver_{p}^{+}(SL_{i})$.

\end{corollary}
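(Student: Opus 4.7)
The plan is to apply Tannakian reconstruction relative to the base category $\Ver_p$ and then transport the resulting equivalence through the isomorphism $\phi: \pi_+ \to SL(L_i)$ established in the preceding theorem. First I would observe that $\Ver_p^+(SL_i)$ is a symmetric tensor category over $\mathbf{k}$ equipped with a symmetric tensor functor to $\Ver_p$, namely the Verlinde fiber functor restricted to the plus subcategory. By the definition of $\pi_+$ as the fundamental group of $\Ver_p^+(SL_i)$ in this relative Tannakian setting, reconstruction yields a canonical equivalence $\Ver_p^+(SL_i) \cong \Rep(\pi_+)$, where $\Rep(\pi_+)$ consists of representations of $\pi_+$ in $\Ver_p$ whose restriction along the canonical map $\pi_1(\Ver_p) \to \pi_+$ agrees with the intrinsic $\pi_1(\Ver_p)$-action on the underlying object.

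Next, pulling back representations along the isomorphism $\phi$ from the previous theorem gives an equivalence $\Rep(\pi_+) \cong \Rep(SL(L_i))$. For the target of this transport to actually be the category $\mathcal{C}$ defined in the corollary, I would verify that the compatibility condition built into $\mathcal{C}$, which uses the map $\pi_1(\Ver_p) \to SL(L_i)$ arising from the tautological $\pi_1(\Ver_p)$-action on $L_i$, corresponds under $\phi$ to the compatibility condition in $\Rep(\pi_+)$. This amounts to commutativity of the triangle formed by $\phi$, the inclusion $\pi_1(\Ver_p) \hookrightarrow \pi_+$, and the map $\pi_1(\Ver_p) \to SL(L_i)$ induced by the tautological action; but $\phi$ was constructed in the previous theorem precisely as the composite $\pi_+ \hookrightarrow \pi \to GL(L_i) \twoheadrightarrow SL(L_i)$, so this triangle commutes by construction.

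Composing the two equivalences produces $\Ver_p^+(SL_i) \cong \mathcal{C}$. The only real point of difficulty is invoking the correct version of Tannakian reconstruction, namely the one relative to $\Ver_p$ rather than over $\mathbf{k}$, which ensures that the $\pi_1(\Ver_p)$-compatibility condition is encoded in $\Rep(\pi_+)$ in the first place. Once that is accepted, the bookkeeping between the two $\pi_1(\Ver_p)$-actions is immediate from the way $\phi$ was set up, and no further calculations are required.
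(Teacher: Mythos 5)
Your proposal is correct and matches the (unstated) argument the paper intends: the corollary is derived from the preceding theorem precisely by relative Tannakian reconstruction over $\Ver_p$, giving $\Ver_p^+(SL_i) \cong \Rep(\pi_+)$, followed by transport of structure along the isomorphism $\phi$, together with the check that the two maps $\pi_1(\Ver_p) \to SL(L_i)$ agree, which you correctly reduce to the commutativity of the triangle built into the construction of $\phi$.
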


\section{Representations of $GL(nL_{i})$}

The goal of this section is to prove Theorem \ref{GL(nL)}. To do so, we are going to construct a triangular decomposition of $GL(X)$ in general. From \cite[Corollary 1.3]{Ven}, we know that subgroups of $GL(X)$ correspond to pairs $(H, \mathfrak{h})$, where $H$ is a subgroup of $GL(X)$ and $\mathfrak{h}$ is a Lie subalgebra of $\gl(X)$ with $\mathfrak{h}_{0} = \mathrm{Lie}(H_{0})$. Suppose $X = X_{1} \oplus \cdots \oplus X_{k}$ is a decomposition of $X$ into simple objects. Then, we may thing of $\gl(X)$ as a $k \times k$ matrix where the entry in row $i$, column $j$ is in $X_{i} \otimes X_{j}^{*}$. We define a few subgroups of this group as follows:

\begin{enumerate}

\item[1.] The maximal torus $T(X)$ corresponds to the pair $(T(GL_{n}), \mathfrak{t}(X))$ where $\mathfrak{t}(X)$ is the diagonal Lie subalgebra $\bigoplus_{i=1}^{k} \gl(X_{i})$. 

\item[2.] Analogously, we can define the Borel $B(X)$ consisting of upper triangular matrices and $N^{-}(X)$ consisting of strictly lower triangular matrices. We use $\mathfrak{b}(X)$ and $\mathfrak{n}^{-}(X)$ to denote the corresponding Lie algebras.

\end{enumerate}

As in the case of ordinary $GL_{n}$, this gives us a triangular decomposition on the distribution algebra.

\begin{proposition} \label{triangular} In $\Ver_{p}^{\ind}$, we have $\mathcal{O}(GL(X))^{\circ}_{1} \cong \mathcal{O}(N^{-}(X))^{\circ}_{1} \otimes \mathcal{O}(B(X))^{\circ}_{1}$ as right $\mathcal{O}(B(nL_{i}))^{\circ}_{1}$-modules.

\end{proposition}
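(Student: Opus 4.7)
My plan is to exhibit the desired isomorphism as the multiplication map coming from the inclusions $N^{-}(X), B(X) \hookrightarrow GL(X)$, and then verify it is an isomorphism by passing to associated graded with respect to the PBW filtration of Theorem \ref{PBW}. More precisely, the scheme-theoretic multiplication $\mu : N^{-}(X) \times B(X) \to GL(X)$ dualizes (or rather, passes to distribution algebras) to give a map
\[
\mu_{*} : \mathcal{O}(N^{-}(X))^{\circ}_{1} \otimes \mathcal{O}(B(X))^{\circ}_{1} \longrightarrow \mathcal{O}(GL(X))^{\circ}_{1}
\]
in $\Ver_{p}^{\ind}$. Since $\mu$ is equivariant for right multiplication by $B(X)$ on both sides, $\mu_{*}$ is a map of right $\mathcal{O}(B(X))^{\circ}_{1}$-modules. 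The content of the proposition is that $\mu_{*}$ is an isomorphism.

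To prove this, I will filter each of the three distribution algebras using Theorem \ref{PBW}, where the ordinary part sits in filtration degree $0$ and the non-trivial isotypic component of the Lie algebra sits in degree $1$. The multiplication map $\mu_{*}$ preserves this filtration because inclusions of subgroups preserve the decompositions in Theorem \ref{PBW}. Consequently, it suffices to show that the associated graded
\[
\mathrm{gr}(\mu_{*}) : \bigl(\mathcal{O}(N^{-}(X)_{0})^{\circ}_{1} \otimes S(\mathfrak{n}^{-}(X)_{\neq 0})\bigr) \otimes \bigl(\mathcal{O}(B(X)_{0})^{\circ}_{1} \otimes S(\mathfrak{b}(X)_{\neq 0})\bigr) \longrightarrow \mathcal{O}(GL(X)_{0})^{\circ}_{1} \otimes S(\mathfrak{gl}(X)_{\neq 0})
\]
is an isomorphism.

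Rearranging the two middle factors on the left using the symmetric braiding of $\Ver_{p}^{\ind}$, the claim reduces to two independent statements. The first is the ordinary triangular decomposition
$$\mathcal{O}(N^{-}(X)_{0})^{\circ}_{1} \otimes \mathcal{O}(B(X)_{0})^{\circ}_{1} \cong \mathcal{O}(GL(X)_{0})^{\circ}_{1},$$
which is the classical (characteristic-$p$) big-cell decomposition applied to the product $GL(X)_{0} = \prod_{i} GL(V_{i})$ and its standard Borel. The second is the symmetric-algebra decomposition
$$S(\mathfrak{n}^{-}(X)_{\neq 0}) \otimes S(\mathfrak{b}(X)_{\neq 0}) \cong S(\mathfrak{gl}(X)_{\neq 0}),$$
which is immediate from the direct sum $\mathfrak{gl}(X) = \mathfrak{n}^{-}(X) \oplus \mathfrak{b}(X)$ (strictly-lower plus upper-triangular in the block decomposition indexed by the simple summands of $X$) by taking non-trivial isotypic components and using $S(A \oplus B) \cong S(A) \otimes S(B)$ in $\Ver_{p}^{\ind}$.

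The main subtlety I expect is keeping track of the right $\mathcal{O}(B(X))^{\circ}_{1}$-module structure through the associated graded and through the rearrangement of tensor factors: one needs that the braiding moves in $\Ver_{p}^{\ind}$ are compatible with right multiplication by $\mathcal{O}(B(X))^{\circ}_{1}$, and that the ordinary triangular decomposition for $GL(X)_{0}$ is itself a right $\mathcal{O}(B(X)_{0})^{\circ}_{1}$-module isomorphism. Both of these follow from naturality, since all maps in sight come from honest scheme morphisms that are $B(X)$-equivariant on the right, but they are the bookkeeping that must be done carefully.
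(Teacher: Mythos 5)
Your proposal is correct and follows essentially the same strategy as the paper: apply the PBW decomposition of Theorem \ref{PBW} to $GL(X)$, $B(X)$ and $N^{-}(X)$, then reduce to the classical triangular decomposition on the ordinary part $GL(X)_{0}$ together with the symmetric-algebra splitting $S(\mathfrak{gl}(X)_{\neq 0}) \cong S(\mathfrak{n}^{-}(X)_{\neq 0}) \otimes S(\mathfrak{b}(X)_{\neq 0})$. Your version is somewhat more explicit than the paper's (naming the multiplication map, passing formally to associated graded, and tracking the right-module structure), but the underlying argument is identical.
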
 

\begin{proof} This follows from the PBW decomposition on $\mathcal{O}(G)^{\circ}_{1}$ for affine group schemes of finite type in $\Ver_{p}$. We have from this decomposition $\mathcal{O}(G)^{\circ}_{1} \cong \mathcal{O}(G_{0})^{\circ}_{1} \otimes S(\mathfrak{g}_{\not=0})$. Applying this decomposition to $G = GL(nL_{i}), B(nL_{i})$ and $N^{-}(nL_{i})$, we reduce the proposition to showing the corresponding statement for $\mathcal{O}(GL_{n})^{\circ}_{1}$ and $S(\mathfrak{gl}(nL_{i})_{\not=0})$ separately. The first is a known result for algebraic groups over $\mathbf{k}$ and the second is just a property of the symmetric algebra, as $\mathfrak{gl}(nL_{i})_{\not=0} = \mathfrak{b}(nL_{i})_{\not=0} \oplus \mathfrak{n}^{-}(nL_{i})_{\not=0}.$

\end{proof} 

Now, $T(nL_{i}) \cong GL(L_{i})^{n}$. By Corollary \ref{RepGLi} and classical representation theory for $T(GL_{n})$ over $\mathbf{k}$, we have a bijection between irreducible representations of $T(nL_{i})$ and the set

\begin{align*} 
W = \{(\lambda, S_{1}, \ldots, S_{n}) : &\lambda \text{ is a dominant integral weight for $GL_{n}$, } \\
&S_{j} \text{ is an irreducible object in $\Ver_{p}^{+}(SL_{i})$}\}.
\end{align*}
Let us call this set $W$, the set of dominant integral weights for $GL(nL_{i})$. We use the triangular decomposition just constructed to define generalized Verma modules that are ind-objects in $\Ver_{p}$.

\begin{definition} For $(\lambda, S_{1}, \ldots, S_{n}) \in W$, define the \emph{generalized Verma module}

$$V(\lambda, S_{1}, \ldots, S_{n}) = \mathcal{O}(GL(nL_{i}))^{\circ}_{1} \otimes_{\mathcal{O}(B(nL_{i}))^{\circ}_{1}} \mathbf{k}(\lambda, S_{1}, \ldots, S_{n})$$
where $\mathbf{k}(\lambda, S_{1}, \ldots, S_{n}) = \mathbf{k}_{\lambda} \boxtimes S_{1} \boxtimes \cdots \boxtimes S_{n}$ is the irreducible representation of $T(nL_{i})$ extended to $B(nL_{i})$ in a trivial manner.

\end{definition}

\begin{proposition} $V(\lambda, S_{1}, \cdots, S_{n})$ has a unique maximal proper submodule\\
$J(\lambda, S_{1}, \cdots, S_{n})$ and hence a unique irreducible quotient 
$$L(\lambda, S_{1}, \ldots, S_{n}) \cong V(\lambda, S_{1}, \ldots, S_{n})/J(\lambda, S_{1}, \ldots, S_{n}).$$

\end{proposition}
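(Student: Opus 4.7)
The plan is to adapt the classical highest-weight argument for Verma modules to this categorical setting: I would use the triangular decomposition from Proposition \ref{triangular} to identify a distinguished ``highest-weight'' subobject of $V(\lambda, S_1, \ldots, S_n)$, show that any proper submodule intersects it trivially, and conclude that the sum of all proper submodules is still proper.

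First, by Proposition \ref{triangular}, the Verma module admits the presentation
$$V(\lambda, S_1, \ldots, S_n) \;\cong\; \mathcal{O}(N^{-}(nL_i))^{\circ}_{1} \otimes \mathbf{k}(\lambda, S_1, \ldots, S_n)$$
in $\Ver_p^{\ind}$, and the image of $\mathbf{1} \otimes \mathbf{k}(\lambda, S_1, \ldots, S_n)$ gives a subobject $M_0 \subseteq V(\lambda, S_1, \ldots, S_n)$ stable under $B(nL_i)$ and generating $V(\lambda, S_1, \ldots, S_n)$ as a $GL(nL_i)$-representation. Next, I would decompose $V(\lambda, S_1, \ldots, S_n)$ into weight spaces for the $T(GL_n)$-action. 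Using Theorem \ref{PBW} applied to $N^-(nL_i)$, every $T(GL_n)$-weight occurring in $\mathcal{O}(N^{-}(nL_i))^{\circ}_{1}$ is a sum of negative roots, with the zero-weight piece equal to $\mathbf{1}$. Consequently the $\lambda$-weight subspace of $V(\lambda, S_1, \ldots, S_n)$ is exactly $M_0$, which by Corollary \ref{RepGLi} combined with the classical picture for $T(GL_n)$ is an irreducible $T(nL_i)$-representation.

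Now suppose $U \subseteq V(\lambda, S_1, \ldots, S_n)$ is a proper $GL(nL_i)$-submodule. The intersection $U \cap M_0$ is a $T(nL_i)$-subrepresentation of $M_0$, hence either $0$ or all of $M_0$. In the latter case, the adjunction
$$\Hom_{GL(nL_i)}(V(\lambda, S_1, \ldots, S_n), U) \;\cong\; \Hom_{B(nL_i)}(\mathbf{k}(\lambda, S_1, \ldots, S_n),\, U|_{B(nL_i)})$$
produces a $GL(nL_i)$-morphism $\phi: V(\lambda, S_1, \ldots, S_n) \to U$ extending $M_0 \hookrightarrow U$. Composing with $U \hookrightarrow V(\lambda, S_1, \ldots, S_n)$ gives a $GL(nL_i)$-endomorphism of $V(\lambda, S_1, \ldots, S_n)$ that restricts to the identity on $M_0$; since the same adjunction identifies $\End_{GL(nL_i)}(V(\lambda, S_1, \ldots, S_n))$ with $\Hom_{B(nL_i)}(\mathbf{k}(\lambda, S_1, \ldots, S_n), M_0) = \mathbf{k}$ (because every $B(nL_i)$-map out of $\mathbf{k}(\lambda, S_1, \ldots, S_n)$ must land in the $\lambda$-weight subspace of the target), this endomorphism is the identity of $V(\lambda, S_1, \ldots, S_n)$, forcing $U = V(\lambda, S_1, \ldots, S_n)$. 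Hence every proper submodule meets $M_0$ trivially.

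The sum $J(\lambda, S_1, \ldots, S_n)$ of all proper submodules therefore also intersects $M_0$ trivially and is itself proper. It is manifestly the unique maximal proper submodule, so $L(\lambda, S_1, \ldots, S_n) = V(\lambda, S_1, \ldots, S_n) / J(\lambda, S_1, \ldots, S_n)$ is the unique irreducible quotient. The main obstacle I anticipate is verifying the Frobenius-type adjunction in this categorical setting cleanly; it should reduce to the standard tensor-hom adjunction for the induction functor $\mathcal{O}(GL(nL_i))^{\circ}_{1} \otimes_{\mathcal{O}(B(nL_i))^{\circ}_{1}} (-)$, but some care is needed to ensure that it behaves well with ind-objects in $\Ver_p$ and with the distribution-algebra structures developed in \cite{Ven}.
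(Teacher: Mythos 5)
Your proof is correct and follows the same highest-weight strategy as the paper: use the triangular decomposition to realize $V(\lambda, S_1, \ldots, S_n) \cong \mathcal{O}(N^{-}(nL_i))^{\circ}_1 \otimes \mathbf{k}(\lambda, S_1, \ldots, S_n)$, observe that $\mathbf{k}(\lambda, S_1, \ldots, S_n)$ is the unique highest $T(GL_n)$-weight subspace, and conclude that the sum of proper submodules (all of which miss this weight space) remains proper. The only real divergence is the Frobenius-adjunction detour showing $\End(V) = \mathbf{k}$ in order to deduce that a submodule containing $M_0$ is everything; the paper instead just notes directly that $M_0$ generates $V$ as a module (being the image of $1 \otimes \mathbf{k}(\lambda, \ldots)$ in the induced module), which gives the same conclusion more quickly and without needing to establish the endomorphism computation.
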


\begin{proof} Note that a proper submodule of $V(\lambda, S_{1}, \ldots, S_{n})$ is the same thing as a submodule that doesn't contain $\mathbf{k}(\lambda, S_{1}, \ldots, S_{n})$. So, we just need to show that if $M_{1}$ and $M_{2}$ are submodules that don't contain $\mathbf{k}(\lambda, S_{1}, \ldots, S_{n})$, then their sum also doesn't contain it. Now, by the triangular decomposition on $O(GL(nL_{i}))^{\circ}_{1}$, 

$$V(\lambda, S_{1}, \ldots, S_{n}) \cong \mathcal{O}(N^{-}(nL_{i}))^{\circ}_{1} \otimes \mathbf{k}(\lambda, S_{1}, \ldots, S_{n})$$
as objects in $\Ver_{p}^{\ind}$. Hence, any submodule of the Verma will automatically be a representation of $T(nL_{i})$. Additionally, $\mathbf{k}(\lambda, S_{1}, \ldots, S_{n})$ has the highest $T(GL_{n})$ weight among all $T(nL_{i})$ submodules and is the unique $T(nL_{i})$ submodule of this weight. Hence, if $M_{1}$ and $M_{2}$ do not contain this generating representation, then their sum cannot as well.

\end{proof} 

From \cite{Ven}[Corollary 1.4], a representation of $\mathcal{O}(GL(nL_{i}))^{\circ}_{1}$ integrates to a representation of $GL(nL_{i})$ if and only if its restriction to $\mathcal{O}(GL_{n})^{\circ}_{1}$ integrates to $GL_{n}$. Hence, to prove Theorem \ref{GL(nL)}, what we really need to do is study the restriction of $V(\lambda, S_{1}, \ldots, S_{n})$ and $L(\lambda, S_{1}, \ldots, S_{n})$ to $GL_{n}$.

\begin{proposition} \label{GL-rest} Let $V(\lambda)$ be the generalized Verma module for $\mathcal{O}(GL_{n})^{\circ}_{1}$ and let $L(\lambda)$ be its unique irreducible quotient. Then, there is a surjective homomorphism of $\mathcal{O}(GL_{n})^{\circ}_{1}$-modules

$$S(\mathfrak{gl}(nL_{i})_{\not=0}) \otimes L(\lambda) \boxtimes S_{1} \boxtimes \cdots \boxtimes S_{n} \rightarrow L(\lambda, S_{1}, \ldots, S_{n})$$
with $GL_{n}$ acting on $S(\mathfrak{gl}(nL_{i})_{\not=0})$ via the adjoint action.

\end{proposition}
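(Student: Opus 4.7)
The plan is to relate the generalized Verma module $V(\lambda, S_1, \ldots, S_n)$ to a tensor product involving the classical $GL_n$-Verma via the PBW decomposition, then pass to irreducible quotients.

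First, I would invoke Theorem~\ref{PBW} to get $\mathcal{O}(GL(nL_i))^{\circ}_{1} \cong \mathcal{O}(GL_n)^{\circ}_{1} \otimes S(\mathfrak{gl}(nL_i)_{\neq 0})$ as left $\mathcal{O}(GL_n)^{\circ}_{1}$-module coalgebras, with an analogous decomposition for $\mathcal{O}(B(nL_i))^{\circ}_{1}$ compatible with the inclusions. The symmetrization $S(\mathfrak{gl}(nL_i)_{\neq 0}) \hookrightarrow \mathcal{O}(GL(nL_i))^{\circ}_{1}$ is $GL_n$-equivariant for the adjoint action, since the PBW filtration by symmetric degree is stable under the conjugation $GL_n$-action. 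These observations together produce a natural $\mathcal{O}(GL_n)^{\circ}_{1}$-equivariant embedding of the classical $GL_n$-Verma $V(\lambda) \otimes (S_1 \boxtimes \cdots \boxtimes S_n) \hookrightarrow V(\lambda, S_1, \ldots, S_n)$, realizing it as the PBW-degree-zero subspace.

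Next, I would check that the multiplication map
$$S(\mathfrak{gl}(nL_i)_{\neq 0}) \otimes V(\lambda) \otimes (S_1 \boxtimes \cdots \boxtimes S_n) \to V(\lambda, S_1, \ldots, S_n), \quad s \otimes v \otimes t \mapsto s \cdot (v \otimes t),$$
is $\mathcal{O}(GL_n)^{\circ}_{1}$-equivariant (with the adjoint action on $S(\mathfrak{gl}(nL_i)_{\neq 0})$ on the domain) — this follows from the commutation relation $xs = \mathrm{ad}(x)(s) + sx$ in $\mathcal{O}(GL(nL_i))^{\circ}_{1}$ combined with $GL_n$-equivariance of the symmetrization — and surjective by PBW. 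Composing with the canonical surjection $V(\lambda, S_1, \ldots, S_n) \twoheadrightarrow L(\lambda, S_1, \ldots, S_n)$ yields a surjection $S(\mathfrak{gl}(nL_i)_{\neq 0}) \otimes V(\lambda) \otimes (S_1 \boxtimes \cdots) \twoheadrightarrow L(\lambda, S_1, \ldots, S_n)$ of $\mathcal{O}(GL_n)^{\circ}_{1}$-modules.

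Finally, I would argue that this surjection factors through the quotient $V(\lambda) \twoheadrightarrow L(\lambda)$, producing the desired map. Setting $K = \ker(V(\lambda) \twoheadrightarrow L(\lambda))$, this reduces to showing that the $\mathcal{O}(GL(nL_i))^{\circ}_{1}$-submodule of $V(\lambda, S_1, \ldots, S_n)$ generated by $K \otimes (S_1 \boxtimes \cdots)$ is contained in the maximal proper submodule $J(\lambda, S_1, \ldots, S_n)$, equivalently does not contain the highest-weight subspace $\mathbf{k}(\lambda, S_1, \ldots, S_n)$, which by Proposition~\ref{triangular} coincides with the entire $T(GL_n)$-weight-$\lambda$ subspace of $V(\lambda, \ldots)$.

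The main obstacle is this final factoring step. A naive $T(GL_n)$-weight argument does not suffice, because positive-weight elements of $S(\mathfrak{gl}(nL_i)_{\neq 0})$ (from the non-$\mathbf{1}$ isotypic components in the upper-triangular blocks of $\mathfrak{gl}(nL_i)$) can in principle raise weights of elements of $K$ back up to $\lambda$. To make the argument work, one must exploit the finer $T(nL_i)$-equivariant structure of $V(\lambda, \ldots) \cong \mathcal{O}(N^-(nL_i))^{\circ}_{1} \otimes \mathbf{k}(\lambda, S_1, \ldots, S_n)$ provided by the triangular decomposition, and verify by direct computation with singular vectors in $K$ and their images under the $\mathcal{O}(GL(nL_i))^{\circ}_{1}$-action that the $\lambda$-weight component of any element of $\mathcal{O}(GL(nL_i))^{\circ}_{1} \cdot (K \otimes (S_1 \boxtimes \cdots))$ vanishes.
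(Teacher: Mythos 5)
Your proposal follows the paper's own proof quite closely: you build the same map $\phi$ from the PBW decomposition, observe surjectivity, and correctly reduce the factoring step to showing that the $\mathcal{O}(GL(nL_i))^{\circ}_1$-submodule of $V(\lambda, S_1, \ldots, S_n)$ generated by $K \boxtimes (S_1 \boxtimes \cdots \boxtimes S_n)$ avoids the highest-weight subspace $\mathbf{k}(\lambda, S_1, \ldots, S_n)$, which by the triangular decomposition is the entire $T(GL_n)$-weight-$\lambda$ piece. You also correctly note that a naive weight argument fails because $\mathfrak{n}^+(nL_i)_{\neq 0} \subset \mathfrak{gl}(nL_i)_{\neq 0}$ can raise $T(GL_n)$-weights. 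Up to here, you are aligned with the paper.

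The gap is the final step, which you explicitly do not carry out: you defer to a ``direct computation with singular vectors'' and assert that the weight-$\lambda$ component of $\mathcal{O}(GL(nL_i))^{\circ}_1 \cdot (K \boxtimes S_1 \boxtimes \cdots)$ vanishes. But this vanishing is far from clear — and, in fact, it appears to fail. Take $n = 2$, $e$ in the upper-right off-diagonal block corresponding to $x \in \sl(L_i) = \mathfrak{gl}(L_i)_{\neq 0}$, and $F \in \mathfrak{n}^-_n$ the classical lowering operator. Then $e \cdot F \cdot \mathbf{k}(\lambda, S_1, S_2) = [e,F] \cdot \mathbf{k}(\lambda, S_1, S_2)$ since $e$ kills the generator, and $[e,F] = (E_{11} - E_{22}) \otimes x \in \mathfrak{t}(nL_i)_{\neq 0}$ acts on $S_1 \boxtimes S_2$ by $s_1 \otimes s_2 \mapsto (x s_1) \otimes s_2 - s_1 \otimes (x s_2)$, which is nonzero for nontrivial $S_j$. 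If $\lambda$ is such that $F \cdot \mathbf{k}_\lambda \in K$ (for instance $\lambda = 0$, where $K = J(0)$ is everything of nonzero weight), this produces a nonzero element of the highest-weight subspace from inside the submodule you are trying to control. The paper itself dispatches this step in a single clause (``as it does not contain the generator'') without further argument, so your hesitation is well placed, but your ``direct computation'' strategy as stated would not close the gap; the assertion needs a genuinely new argument, and as written neither your proposal nor the paper supplies one.
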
 

\begin{proof} We first construct a surjective map of $\mathcal{O}(GL_{n})^{\circ}_{1}$-modules 

$$\phi: S(\mathfrak{gl}(nL_{i})_{\not=0}) \otimes V(\lambda) \boxtimes S_{1} \boxtimes \cdots \boxtimes S_{n} \rightarrow V(\lambda, S_{1}, \ldots, S_{n}).$$
Pick a PBW decomposition $\mathcal{O}(GL(nL_{i}))^{\circ}_{1} \cong S(\mathfrak{gl}(nL_{i})_{\not=0}) \boxtimes \mathcal{O}(GL_{n})^{\circ}_{1}$. The left hand side of the above map is

$$S(\mathfrak{gl}(nL_{i})_{\not=0}) \otimes V(\lambda) \boxtimes S_{1} \boxtimes \cdots \boxtimes S_{n} \cong S(\mathfrak{gl}(nL_{i})_{\not=0}) \otimes (\mathcal{O}(GL_{n})^{\circ}_{1} \otimes_{\mathcal{O}(B_{n})^{\circ}_{1}}(\mathbf{k}_{\lambda} \boxtimes S_{1} \boxtimes \cdots \boxtimes S_{n})).$$
The right hand side is

$$(S(\mathfrak{gl}(nL_{i})_{\not=0}) \otimes \mathcal{O}(GL_{n})^{\circ}_{1})\otimes_{\mathcal{O}(B(nL_{i}))^{\circ}_{1}} (\mathbf{k}_{\lambda} \boxtimes S_{1} \boxtimes \cdots \boxtimes S_{n}).$$
There is thus, an obvious map from the left hand side to the right, as $\mathcal{O}(B_{n})^{\circ}_{1} \subseteq \mathcal{O}(B(nL_{i}))^{\circ}_{1}$. This is a map of $\mathcal{O}(GL_{n})^{\circ}_{1}$-modules: if you act by $\mathcal{O}(GL_{n})^{\circ}_{1}$ on the right hand side, you pick up a copy of the adjoint action on $S(\mathfrak{gl}(nL_{i})_{\not=0})$ as you commute it past. This map is also obviously surjective as it contains the generator of the Verma. Composing with the quotient map $V(\lambda, S_{1}, \cdots, S_{n}) \rightarrow L(\lambda, S_{1}, \ldots, S_{n})$ gives us a surjective module map 

$$S(\mathfrak{gl}(nL_{i})_{\not=0}) \otimes V(\lambda) \boxtimes S_{1} \boxtimes \cdots \boxtimes S_{n} \rightarrow L(\lambda, S_{1}, \ldots, S_{n})$$
and so to finish the proof, we just need to show that the image under $\phi$ of $S(\mathfrak{gl}(nL_{i})_{\not=0}) \otimes J(\lambda) \boxtimes S_{1} \boxtimes \cdots \boxtimes S_{n}$ is contained inside $J(\lambda, S_{1}, \ldots, S_{n})$. By the PBW decomposition, this image is a $\mathcal{O}(GL(nL_{i}))^{\circ}_{1}$-submodule of $V(\lambda, S_{1}, \ldots, S_{n})$ and not just an $\mathcal{O}(GL_{n})^{\circ}_{1}$-submodule. Since the image is a proper $\mathcal{O}(GL_{n})^{\circ}_{1}$-module, as it does not contain the generator, it must be contained inside $J(\lambda, S_{1}, \ldots, S_{n})$. 

\end{proof}

\begin{corollary} $L(\lambda, S_{1}, \ldots, S_{n})$ is an irreducible $GL(nL_{i})$-representation in $\Ver_{p}$. Additionally, the two actions of the fundamental group of $\Ver_{p}$, one coming from the inclusion into $GL(nL_{i})$ and one coming from the action on objects of $\Ver_{p}$ coincide.

\end{corollary}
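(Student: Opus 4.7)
The plan is to establish two things: first, that $L(\lambda, S_{1}, \ldots, S_{n})$, which so far lives only as a module over the distribution algebra $\mathcal{O}(GL(nL_{i}))^{\circ}_{1}$, integrates to a representation of the affine group scheme $GL(nL_{i})$; second, that the two $\pi_{1}(\Ver_{p})$-actions on this representation agree. Both facts should be readable off from the surjection of Proposition \ref{GL-rest}, using Lemma \ref{nilpotence} to control lengths, together with Corollary \ref{RepGLi} applied at the generating weight subobject.

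For integrability, the criterion \cite{Ven}[Corollary 1.4] reduces the problem to whether the restriction of $L(\lambda, S_{1}, \ldots, S_{n})$ to $\mathcal{O}(GL_{n})^{\circ}_{1}$ integrates to $GL_{n}$. Proposition \ref{GL-rest} exhibits this restriction as a quotient of
$$M \;=\; S(\mathfrak{gl}(nL_{i})_{\not=0}) \otimes L(\lambda) \boxtimes S_{1} \boxtimes \cdots \boxtimes S_{n},$$
on which $GL_{n}$ acts diagonally: via the adjoint action on the first factor, the highest weight structure on $L(\lambda)$, and trivially on the $S_{j}$. Lemma \ref{nilpotence} makes $S(\mathfrak{gl}(nL_{i})_{\not=0})$ a finite-length object of $\Ver_{p}$, so $M$ has finite length, and each tensor factor carries a rational $GL_{n}$-structure; hence $M$ is a rational $GL_{n}$-module and so is its quotient $L(\lambda, S_{1}, \ldots, S_{n})$. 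Integrability to $GL(nL_{i})$ follows. Irreducibility as a $GL(nL_{i})$-representation is automatic from the construction as the unique irreducible quotient of $V(\lambda, S_{1}, \ldots, S_{n})$, since under the integrated structure the $\mathcal{O}(GL(nL_{i}))^{\circ}_{1}$-submodules coincide with the $GL(nL_{i})$-subrepresentations.

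For the compatibility of the two fundamental group actions, I would use that $L(\lambda, S_{1}, \ldots, S_{n})$ is generated under the distribution algebra by the weight subobject $\mathbf{k}(\lambda, S_{1}, \ldots, S_{n}) = \mathbf{k}_{\lambda} \boxtimes S_{1} \boxtimes \cdots \boxtimes S_{n}$. On this subobject the two actions already agree: each $S_{j}$ lies in $\Ver_{p}^{+}(SL_{i})$, and Corollary \ref{RepGLi} identifies this category with those representations of $SL(L_{i})$ for which the two $\pi_{1}$-actions coincide, while $\mathbf{k}_{\lambda}$ is $\pi_{1}$-trivial. Both the action of $\pi_{1}(\Ver_{p})$ through the inclusion $\pi_{1}(\Ver_{p}) \hookrightarrow GL(nL_{i})$ and the intrinsic action as an object of $\Ver_{p}^{\ind}$ are compatible with the module structure over $\mathcal{O}(GL(nL_{i}))^{\circ}_{1}$, so agreement on a generating subobject propagates through the module action to the entire irreducible quotient.

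The main obstacle I anticipate is the propagation step in the last paragraph: one must pin down that the action of $\pi_{1}(\Ver_{p})$ on $\mathcal{O}(GL(nL_{i}))^{\circ}_{1}$ coming from the inclusion $\pi_{1}(\Ver_{p}) \to GL(nL_{i})$ matches the natural action on $\mathcal{O}(GL(nL_{i}))^{\circ}_{1}$ as an ind-object of $\Ver_{p}$, so that applying a distribution-algebra element to a $\pi_{1}$-coherent vector yields a $\pi_{1}$-coherent vector. This is essentially naturality of the construction $G \mapsto \mathcal{O}(G)^{\circ}_{1}$ with respect to subgroup scheme inclusions, but it is the one structural check that does not fall out of Proposition \ref{GL-rest} directly and has to be verified by hand.
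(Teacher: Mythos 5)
Your integrability argument is the same as the paper's: Proposition~\ref{GL-rest} together with Lemma~\ref{nilpotence} exhibits $L(\lambda, S_{1}, \ldots, S_{n})$ as a quotient of a finite-length rational $GL_{n}$-module, and integrability to $GL(nL_{i})$ follows from the criterion in \cite{Ven}[Corollary 1.4]. That part is fine and matches the paper.

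For compatibility of the two $\pi_{1}(\Ver_{p})$-actions, you take a genuinely different route from the paper. You propose to check agreement on the generating weight subobject $\mathbf{k}(\lambda, S_{1}, \ldots, S_{n})$ and then propagate agreement over the whole $\mathcal{O}(GL(nL_{i}))^{\circ}_{1}$-module. The paper instead observes that the structural homomorphism $\pi_{1}(\Ver_{p}) \to GL(nL_{i})$ factors through the maximal torus $T(nL_{i})$, since $\pi_{1}$ acts diagonally on $L_{i}^{\oplus n}$; this immediately reduces the compatibility check to the restriction of $L(\lambda, S_{1}, \ldots, S_{n})$ to $T(nL_{i})$, where, via the triangular decomposition $V(\lambda, \ldots) \cong \mathcal{O}(N^{-}(nL_{i}))^{\circ}_{1} \otimes \mathbf{k}(\lambda, \ldots)$ as a $T$-module, compatibility is manifest for each tensor factor separately. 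The paper's route is cleaner because the verification that the $\pi_{1}$-action via $T$ on $\mathcal{O}(N^{-}(nL_{i}))^{\circ}_{1}$ is the intrinsic one is essentially immediate from $\mathfrak{n}^{-}(nL_{i}) \subset \mathfrak{gl}(nL_{i}) = (nL_{i}) \otimes (nL_{i})^{*}$.

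You have, to your credit, correctly identified the gap in your own argument: you need that the adjoint action of $\pi_{1}$ on $\mathcal{O}(GL(nL_{i}))^{\circ}_{1}$ coming from $\pi_{1} \to GL(nL_{i})$ agrees with the intrinsic $\pi_{1}$-action on this ind-object. As written, this is asserted but not proved, so your argument is not complete. The statement is in fact true -- the homomorphism $\pi_{1} \to GL(nL_{i})$ is defined precisely so that the induced action on the tautological representation $nL_{i}$ is the intrinsic one, hence the induced adjoint action on $\mathfrak{gl}(nL_{i}) = (nL_{i}) \otimes (nL_{i})^{*}$ is intrinsic, and this passes to the distribution algebra by functoriality of the constructions used to build it -- but you should not frame this as ``naturality of $G \mapsto \mathcal{O}(G)^{\circ}_{1}$ with respect to subgroup inclusions''; the real content is the special form of the homomorphism $\pi_{1} \to GL(nL_{i})$, not a formal naturality. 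The paper's observation that this homomorphism factors through the torus is the cleanest way to make this precise, and you should adopt it or prove the lemma you have flagged.
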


\begin{proof} By Proposition \ref{GL-rest} and Lemma \ref{nilpotence}, this module is the quotient of an integrable $\mathcal{O}(GL_{n})^{\circ}_{1}$-module of finite length in $\Ver_{p}^{\ind}$. Hence, it is not just an $\mathcal{O}(GL(nL_{i}))^{\circ}_{1}$-module but also a $GL_{n}$-representation. Compatibility of fundamental group actions follows from the fact that the inclusion of the fundamental group in $GL(nL_{i})$ factors through the maximal torus as it acts diagonally on $L_{i}^{\oplus n}$. The corollary thus follows.

\end{proof}

\begin{corollary} Let $V$ be an irreducible representation of $GL(nL_{i})$ in $\Ver_{p}$. Then, $V \cong L(\lambda, S_{1}, \ldots, S_{n})$ for some dominant, integral $\lambda, S_{1}, \ldots, S_{n}$.

\end{corollary}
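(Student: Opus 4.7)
The plan is to adapt the classical highest-weight argument to this setting, using the triangular decomposition of $\mathcal{O}(GL(nL_{i}))^{\circ}_{1}$ established in Proposition \ref{triangular} together with Frobenius reciprocity. Given an irreducible $V \in \Rep(GL(nL_{i}))$, I would first restrict to $\mathcal{O}(GL_{n})^{\circ}_{1}$. Since $V$ is an object of $\Ver_{p}$ (hence of finite length), the image of $V$ under the Verlinde fiber functor is finite-dimensional, and so the set of $T(GL_{n})$-weights appearing in $V$ is finite.

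Next, I would pick a weight $\lambda$ of $V$ that is maximal with respect to the dominance order. Let $V_{\lambda} \subseteq V$ be the $\lambda$-weight space. Since the non-torus isotypic components of $\mathfrak{t}(nL_{i}) = \bigoplus_{j} \mathfrak{gl}(L_{j})$ (specifically the $\mathfrak{sl}(L_{i})$ factors) act with $T(GL_{n})$-weight zero, $V_{\lambda}$ is naturally a subrepresentation of $T(nL_{i}) \cong GL(L_{i})^{n}$ inside $\Ver_{p}$. By Corollary \ref{RepGLi} and the classification of irreducibles for $T(nL_{i})$, $V_{\lambda}$ contains some irreducible summand isomorphic to $\mathbf{k}_{\lambda} \boxtimes S_{1} \boxtimes \cdots \boxtimes S_{n}$. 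By the maximality of $\lambda$ and the fact that $\mathfrak{n}^{+}(nL_{i})$ strictly raises the $T(GL_{n})$-weight (and its $\mathfrak{n}^{+}_{\not=0}$ portion is locally nilpotent on finite length objects by Lemma \ref{nilpotence}), the Borel $B(nL_{i})$ acts on this summand as the trivial extension of $\mathbf{k}(\lambda, S_{1}, \ldots, S_{n})$.

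Then Frobenius reciprocity (induction from $\mathcal{O}(B(nL_{i}))^{\circ}_{1}$ to $\mathcal{O}(GL(nL_{i}))^{\circ}_{1}$) yields a nonzero homomorphism of $\mathcal{O}(GL(nL_{i}))^{\circ}_{1}$-modules $V(\lambda, S_{1}, \ldots, S_{n}) \to V$. Since $V$ is irreducible, this map is surjective; by the uniqueness of the maximal proper submodule of the Verma module, it factors through $L(\lambda, S_{1}, \ldots, S_{n})$, and then simplicity of both sides forces $V \cong L(\lambda, S_{1}, \ldots, S_{n})$.

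The main obstacle I anticipate is the argument that $B(nL_{i})$ acts on the highest weight summand through $T(nL_{i})$: one must rule out that $\mathfrak{n}^{+}_{\not=0}(nL_{i})$ contributes a nonzero extension when landing back in $V_{\lambda}$, and one must verify carefully that taking a summand inside $V_{\lambda}$ rather than all of $V_{\lambda}$ is compatible with the $\mathcal{O}(B(nL_{i}))^{\circ}_{1}$-action. Both points hinge on the fact that the nontrivial weight components of $\mathfrak{n}^{+}(nL_{i})$ strictly raise the $T(GL_{n})$-weight, combined with the nilpotence built into $\Ver_{p}$; once these are in place, the rest of the argument is a formal consequence of Proposition \ref{triangular} and the universal property of generalized Verma modules.
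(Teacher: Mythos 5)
Your proposal is correct and follows the same highest-weight strategy as the paper's proof: pick a maximal $T(GL_n)$-weight, identify a $T(nL_i)$-irreducible summand of that weight space as a highest-weight $\mathcal{O}(B(nL_i))^{\circ}_1$-submodule, and use the universal property of the generalized Verma module plus irreducibility of $V$ to conclude $V \cong L(\lambda, S_1, \ldots, S_n)$. You have only left implicit the final verification that $\lambda$ is dominant integral, which the paper obtains by noting that $V$ restricted to $GL_n$ is a finite-length integrable representation; this is a one-line addition to your argument and does not constitute a gap in the approach.
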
 

\begin{proof} Since $V$ has finite length in $\Ver_{p}$, restricting $V$ to the maximal torus shows that there must be some highest weight that generates $V$. This gives us a surjective map from a Verma module to $V$ and hence $V \cong L(\lambda, S_{1}, \ldots, S_{n})$ for some weight $\lambda, S_{1}, \ldots, S_{n}$. Restricting to $GL_{n}$ now shows that this weight must be dominant, integral.

\end{proof} 

\begin{corollary} If $(\lambda, S_{1}, \ldots, S_{n}) \not= (\lambda', S'_{1}, \ldots, S'_{n})$ are two different dominant, integral weights for $GL(nL_{i})$, then the corresponding irreducible representations are not isomorphic.

\end{corollary}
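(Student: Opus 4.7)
My strategy is to recover the labeling data $(\lambda, S_{1}, \ldots, S_{n})$ intrinsically from the module $L(\lambda, S_{1}, \ldots, S_{n})$, by isolating a distinguished ``highest weight'' space on which $T(nL_{i}) = GL(L_{i})^{n}$ acts with a prescribed character on the $T(GL_{n})$-factor and with prescribed irreducible $SL(L_{i})^{n}$-structure. Once that extraction is canonical, any isomorphism between two $L(\cdot)$'s must transport the labels, and non-isomorphism for distinct labels follows.

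First, I would restrict $V(\lambda, S_{1}, \ldots, S_{n})$ to $T(nL_{i})$. By Proposition \ref{triangular}, combined with PBW, as objects in $\Ver_{p}^{\ind}$ we have $V(\lambda, S_{1}, \ldots, S_{n}) \cong \mathcal{O}(N^{-}(nL_{i}))^{\circ}_{1} \otimes \mathbf{k}(\lambda, S_{1}, \ldots, S_{n})$, and this identification intertwines the $T(nL_{i})$-action coming from the adjoint action on $\mathcal{O}(N^{-}(nL_{i}))^{\circ}_{1}$ and the given action on $\mathbf{k}(\lambda, S_{1}, \ldots, S_{n})$. Since $\mathfrak{n}^{-}(nL_{i})$ is strictly lower triangular, every $T(GL_{n})$-weight appearing in $\mathcal{O}(N^{-}(nL_{i}))^{\circ}_{1}$ is a non-positive integer combination of the simple roots of $GL_{n}$. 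Consequently every $T(GL_{n})$-weight occurring in $V(\lambda, S_{1}, \ldots, S_{n})$ is $\le \lambda$, and the $\lambda$-weight space is exactly the generator $\mathbf{k}_{\lambda} \boxtimes S_{1} \boxtimes \cdots \boxtimes S_{n}$, carrying its full $SL(L_{i})^{n}$-structure (using the factorization $GL(L_{i}) \cong GL(1,\mathbf{k}) \times SL(L_{i})$ of Corollary \ref{torus-description}).

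Next, I would pass to the quotient. The maximal proper submodule $J(\lambda, S_{1}, \ldots, S_{n})$ does not contain the cyclic generator, so it intersects the $\lambda$-weight space trivially; hence the $\lambda$-weight space of $L(\lambda, S_{1}, \ldots, S_{n})$ is still $\mathbf{k}_{\lambda} \boxtimes S_{1} \boxtimes \cdots \boxtimes S_{n}$, and $\lambda$ remains the unique maximal $T(GL_{n})$-weight occurring. Taking this maximum therefore intrinsically recovers $\lambda$, and the $SL(L_{i})^{n}$-action on the $\lambda$-weight space recovers the outer tensor product $S_{1} \boxtimes \cdots \boxtimes S_{n}$. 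Since each $S_{j}$ is irreducible in $\Ver_{p}^{+}(SL_{i})$, a standard argument for products of groups (applied to $SL(L_{i})^{n}$, whose representation category is $\Ver_{p}^{+}(SL_{i})^{\boxtimes n}$ by Corollary \ref{RepGLi}) shows that this outer tensor product determines the ordered tuple $(S_{1}, \ldots, S_{n})$ up to isomorphism in each slot. Putting everything together, any isomorphism $L(\lambda, S_{1}, \ldots, S_{n}) \cong L(\lambda', S'_{1}, \ldots, S'_{n})$ forces $(\lambda, S_{1}, \ldots, S_{n}) = (\lambda', S'_{1}, \ldots, S'_{n})$.

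The main obstacle I anticipate is the first step: verifying that the triangular decomposition of $V(\lambda, S_{1}, \ldots, S_{n})$ is truly $T(nL_{i})$-equivariant in a way that lets me separately read off the $T(GL_{n})$-weight grading and the $SL(L_{i})^{n}$-isotypic structure. The point is that the adjoint action of $T(nL_{i})$ preserves $\mathfrak{n}^{-}(nL_{i})$, and the $T(GL_{n})$-factor refines the block decomposition of the lower-triangular part into its negative-root pieces, while the $SL(L_{i})^{n}$-factor acts within each block; this is where Proposition \ref{triangular} and the \emph{module}-coalgebra form of the PBW theorem (Theorem \ref{PBW}) must be invoked carefully. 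Once this compatibility is recorded, the highest-weight extraction mirrors the classical argument and the corollary follows.
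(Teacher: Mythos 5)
Your argument is correct and follows essentially the same route as the paper: both rest on the observation that the generator $\mathbf{k}_{\lambda} \boxtimes S_{1} \boxtimes \cdots \boxtimes S_{n}$ is recoverable as the (unique, irreducible $T(nL_{i})$-module) highest $T(GL_{n})$-weight space of $L(\lambda, S_{1}, \ldots, S_{n})$, which forces any isomorphism to match the labels. The paper packages this via the tensor-hom adjunction for the Verma functor rather than by explicitly extracting the weight space, but the underlying mechanism is identical.
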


\begin{proof} This follows from the universal property of Verma modules (tensor-hom adjunction) and the fact the the generator of the Verma module is a highest weight subobject for the Borel action. The proof is the same as it is for ordinary groups.

\end{proof}

These corollaries together prove Theorem \ref{GL(nL)}.

\section{Parabolic Induction and Representations of $GL(X)$}

The goal of this section is to prove Theorem \ref{GL(X)}. To do this, we need to introduce some parabolic subgroups. Let $X = \displaystyle\bigoplus_{i=1}^{p-1} n_{i}L_{i}.$ Then, $G:= \displaystyle \prod_{i=1}^{p-1}GL(n_{i}L_{i})$ is a subgroup of $GL(X)$ and $\mathfrak{g} + \mathfrak{b}(X)$ is a Lie subalgebra of $\mathfrak{gl}(X)$. We can thus define a parabolic subgroup $P$ with $P_{0} = G_{0} = GL(X)_{0}$ and $Lie(P) = \mathfrak{g} + \mathfrak{b}(X).$ 

\begin{definition} Let $V_{1}, \ldots, V_{p-1}$ be irreducible representations of $GL(n_{1}L_{1}), \ldots, \\GL(n_{p-1}L_{p-1})$ respectively. Then, $V = V_{1} \otimes \cdots \otimes V_{p-1}$ is an irreducible $G$-representation and by extending by $0$ we can make it an irreducible $P$-representation. Define the parabolic induction, $I_{P}(V)$, to be $\mathcal{O}(GL(X))^{\circ}_{1} \otimes_{\mathcal{O}(P)^{\circ}_{1}} V.$

\end{definition}

Since $GL(X)_{0} = P_{0}$, this is an integrable representation and by Lemma \ref{nilpotence} it has finite length. 

\begin{proposition} $I_{P}(V)$ has a unique irreducible quotient $L(V)$ and $L(V) \cong L(W)$ if and only if $V \cong W$.

\end{proposition}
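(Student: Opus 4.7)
The plan is to adapt the classical parabolic induction argument to the present categorical setting by using a triangular decomposition for $P$ in place of the Borel decomposition of Proposition \ref{triangular}. Order the simple summands in $X = \bigoplus_{i} n_{i} L_{i}$ so that all copies of $L_{i}$ precede all copies of $L_{i+1}$, ensuring $B(X) \subseteq P$. Writing $\mathfrak{gl}(X)$ in block form indexed by $i \in \{1, \ldots, p-1\}$, we have $\mathfrak{gl}(X) = \mathfrak{u}^{-} \oplus \Lie(P)$, where $\mathfrak{u}^{-}$ is the strictly block-lower-triangular part. Since $L_{i} \otimes L_{j}^{*}$ has no $\mathbf{1}$-isotypic component for $i \neq j$ by the Verlinde fusion rules, $\mathfrak{u}^{-}$ lies entirely in $\mathfrak{gl}(X)_{\not=0}$, and the subscheme $U^{-}$ with $U^{-}_{0}$ trivial and $\Lie(U^{-}) = \mathfrak{u}^{-}$ is well-defined. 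Applying Theorem \ref{PBW} to both $GL(X)$ and $P$ (which share $GL(X)_{0}$ as underlying ordinary group scheme) and arguing exactly as in Proposition \ref{triangular}, I obtain $\mathcal{O}(GL(X))^{\circ}_{1} \cong \mathcal{O}(U^{-})^{\circ}_{1} \otimes \mathcal{O}(P)^{\circ}_{1}$ as right $\mathcal{O}(P)^{\circ}_{1}$-modules. Consequently $I_{P}(V) \cong \mathcal{O}(U^{-})^{\circ}_{1} \otimes V$ as ind-objects in $\Ver_{p}^{\ind}$, with generator $1 \otimes V$.

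Next, I introduce a cocharacter $\lambda: \mathbb{G}_{m} \rightarrow G$ acting on the multiplicity space of $L_{i}$ by $t^{a_{i}}$ for integers $a_{1} > a_{2} > \cdots > a_{p-1}$. Because each $\lambda_{i}(t) = t^{a_{i}} \cdot \mathrm{Id}$ is a scalar automorphism of $n_{i} L_{i}$, the cocharacter $\lambda$ is central in $G$. Its adjoint action on $\mathfrak{gl}(X)$ assigns weight $a_{i} - a_{j}$ to the $(i,j)$-block, so $\mathfrak{u}^{-}$ has strictly negative $\lambda$-weights while $\mathfrak{g}$ has weight zero. Hence the $\lambda$-weight-zero subobject of $\mathcal{O}(U^{-})^{\circ}_{1}$ is $\mathbf{1}$, and the $\lambda$-weight-zero subobject of $I_{P}(V)$ equals $1 \otimes V$.

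Let $M \subseteq I_{P}(V)$ be any $GL(X)$-subrepresentation. Since $\lambda$ factors through $GL(X)$, $M$ decomposes into $\lambda$-weight spaces; denote its weight-zero part by $M_{(0)}$, which lies inside $1 \otimes V \cong V$. Because $\lambda$ is central in $G$, $M_{(0)}$ is preserved by $G$, so by irreducibility of $V$ we have $M_{(0)} = 0$ or $M_{(0)} = V$. In the latter case $M$ contains the generator and hence equals $I_{P}(V)$. Consequently the sum $J(V)$ of all proper submodules has trivial $\lambda$-weight-zero part and is itself proper, so $J(V)$ is the unique maximal proper submodule and $L(V) = I_{P}(V)/J(V)$ is the unique irreducible quotient. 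Furthermore the $\lambda$-weight-zero subobject of $L(V)$ is $V$ as a $G$-representation.

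For the classification, a $G$-isomorphism $V \cong W$ induces a $P$-isomorphism (since $P$ acts on both through its quotient $G$) and then by functoriality an isomorphism $I_{P}(V) \cong I_{P}(W)$, hence $L(V) \cong L(W)$. Conversely, a $GL(X)$-isomorphism $L(V) \cong L(W)$ restricts to a $G$-isomorphism on $\lambda$-weight-zero subobjects, giving $V \cong W$. The main technical step is the parabolic PBW of the first paragraph; once that is in place, the remainder of the argument is parallel to the classical parabolic induction proof for reductive groups.
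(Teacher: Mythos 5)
Your proof is correct, and it reorganizes the paper's highest-weight argument in a way that is worth comparing. The paper's proof is terse: it observes that, restricted to the maximal torus, $I_{P}(V)$ sits inside $\mathcal{O}(N^{-}(X))^{\circ}_{1} \otimes V$ and then invokes the highest-weight argument of the previous section. You instead isolate the parabolic PBW decomposition $\mathcal{O}(GL(X))^{\circ}_{1} \cong \mathcal{O}(U^{-})^{\circ}_{1} \otimes \mathcal{O}(P)^{\circ}_{1}$ explicitly, using the observation that the block-off-diagonal $\mathfrak{u}^{-}$ lies entirely in $\mathfrak{gl}(X)_{\neq 0}$ (this is exactly the reason the parabolic $P$ is easier to handle than the Borel $B(X)$, whose unipotent radical has a nontrivial degree-zero part $N^{-}(X)_{0}$), and you replace the full torus-weight analysis by a single cocharacter $\lambda : \mathbb{G}_m \rightarrow G$ that is central in $G$ but acts with strictly negative weights on $\mathfrak{u}^{-}$. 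This buys two things: you do not need to track the $T(GL_{n})$-weight spectrum of the $G$-irreducible $V$ (which is not a single weight line), and the $G$-invariance of the weight-zero piece of any submodule is immediate from centrality of $\lambda$ in $G$, rather than requiring the separate observation that $G \subseteq P$ preserves $1 \otimes V$. The argument for the bijection (extracting $V$ back from $L(V)$ as the $\lambda$-weight-zero $G$-subrepresentation) is likewise a clean instance of a standard technique. The underlying ingredients — the PBW theorem, the finite-length observation from Lemma \ref{nilpotence}, and the Verlinde fusion rule $[\mathbf{1} : L_{i} \otimes L_{j}^{*}] = \delta_{ij}$ — are the same ones the paper relies on.
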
 

\begin{proof} This follows from the same highest weight argument as in the previous section, because as a module over the maximal torus $P(V) \subseteq \mathcal{O}(N^{-}(X))^{\circ}_{1} \otimes V.$

\end{proof} 

The following proposition also follows in the same manner.

\begin{proposition} If $M$ is an irreducible $GL(X)$ representation in $\Ver_{p}$, then $M \cong L(V)$ for some irreducible $G$-representation $V$.

\end{proposition}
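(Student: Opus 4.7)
The plan is to mimic the highest-weight argument from the previous section, with the parabolic $P$ playing the role of the Borel and the Levi $G$ playing the role of the maximal torus. Let $N$ denote the unipotent radical of $P$, whose Lie algebra $\mathfrak{n}$ is identified, for a compatible ordering of the simple summands of $X$, with the strict block-upper-triangular part $\bigoplus_{i<j} n_i n_j \cdot (L_i \otimes L_j^*)$ inside $\gl(X)$; thus $\Lie(P) = \Lie(G) + \mathfrak{n}$ as a direct sum of objects in $\Ver_p$.

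The first step is to produce a nonzero $\mathfrak{n}$-invariant subobject of $M$. Each simple $L_k$ appearing in $\mathfrak{n}$ satisfies $S^N(L_k) = 0$ for $N$ large by Lemma \ref{nilpotence}, so $U(\mathfrak{n}) = S(\mathfrak{n})$ is already an object of $\Ver_p$ rather than merely of $\Ver_p^{\ind}$. Since $M$ has finite length in $\Ver_p$, the augmentation ideal of $U(\mathfrak{n})$ acts locally nilpotently on $M$, producing a nonzero subobject $W \subseteq M$ annihilated by $\mathfrak{n}$. Because $G$ normalizes $N$, the subobject $W$ is $G$-stable, so I can pick a nonzero irreducible $G$-subrepresentation $V \subseteq W$. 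Extending $V$ to a $P$-representation by letting the unipotent radical act trivially makes $V$ an irreducible $P$-representation.

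The second step is to invoke the universal property of parabolic induction. The inclusion $V \hookrightarrow M$ of $P$-representations yields, via the tensor-Hom adjunction for $\mathcal{O}(GL(X))^\circ_1 \otimes_{\mathcal{O}(P)^\circ_1} (-)$, a nonzero homomorphism $I_P(V) \to M$ of $GL(X)$-representations. Irreducibility of $M$ forces surjectivity, and by the previous proposition $L(V)$ is the unique irreducible quotient of $I_P(V)$, so the kernel of this map must contain every proper submodule of $I_P(V)$, whence $M \cong L(V)$.

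The main obstacle is the first step: producing the $\mathfrak{n}$-invariants in $M$. The key enabler is the finite length of $U(\mathfrak{n})$ as an object of $\Ver_p$, which comes directly from Lemma \ref{nilpotence}; without it the usual nilpotence-based argument for reductive groups would not transfer. Once this is in hand, all remaining ingredients — $G$-stability of invariants, the trivial extension to $P$, and Frobenius reciprocity — are routine categorical analogues of the classical story.
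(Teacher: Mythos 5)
Your argument is correct and it supplies the details that the paper leaves implicit: the paper's own proof consists of the single sentence ``The following proposition also follows in the same manner,'' pointing back to the highest-weight argument used for $GL(nL_i)$, and your write-up is exactly the parabolic version of that argument carried out carefully (produce a nonzero $\mathfrak{n}$-invariant subobject of $M$, observe it is a $G$-representation because $G$ normalizes $N$, take an irreducible $G$-subrepresentation $V$, apply Frobenius reciprocity for $I_P(-) = \mathcal{O}(GL(X))^\circ_1 \otimes_{\mathcal{O}(P)^\circ_1}(-)$, and invoke uniqueness of the irreducible quotient $L(V)$). Where the paper instead phrases the first step as ``restrict to the maximal torus and take the highest weight,'' you go directly through the nilradical; these are the same idea, and your version is arguably cleaner because it stays at the level of the Levi $G$ rather than descending to $T(X)$.

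One small slip worth flagging: you write $U(\mathfrak{n}) = S(\mathfrak{n})$. That is false as an algebra identity, since $\mathfrak{n}$, being the strictly block-upper-triangular part of $\gl(X)$, is not abelian as soon as three or more of the $n_i$ are nonzero (composing maps across two adjacent off-diagonal blocks produces a bracket landing in a third block). What is true, and what your argument actually needs, is that the PBW theorem of \cite{Ven} gives $U(\mathfrak{n}) \cong S(\mathfrak{n})$ as \emph{objects} of $\Ver_p^{\ind}$; together with Lemma \ref{nilpotence} and the fact that every simple constituent of $\mathfrak{n}$ is some $L_k$ with $k \ge 2$ (no summand $L_i \otimes L_j^*$ with $i \neq j$ contains $\mathbf{1}$), this shows $U(\mathfrak{n})$ has finite length, which is what you use. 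A slightly more conceptual route to ``$M^{\mathfrak{n}} \neq 0$'' is to note that $N$ is a finite connected group scheme in $\Ver_p$ with $N_0$ trivial, hence $\mathbf{1}$ is its only irreducible representation; either way the step is justified.
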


These two propositions together prove Theorem \ref{GL(X)}.

\begin{remark} The key difficulty in the proof of the theorem was that induction from $GL(nL_{i})_{0}$ to $GL(nL_{i})$ is nontrivial when $i$ is not $1$ or $p-1$ because any subgroup that contains both the Borel subgroup of $GL(nL_{i})$ and $GL(nL_{i})_{0}$ must be all of $GL(nL_{i})$. This issue does not show up in the category of supervector spaces.

\end{remark}

\end{document}